\definecolor{colorlinks}{RGB}{0, 24, 168}
\definecolor{colorcites}{RGB}{124, 10, 2}
\def\paragraph{\medskip \@startsection{paragraph}{4}%
  \z@\z@{-\fontdimen2\font}%
  {\normalfont\bfseries}}
\theoremstyle{plain}
\newtheorem{Thm}{Theorem}[section]
\newtheorem{Lem}[Thm]{Lemma}
\newtheorem{Cor}[Thm]{Corollary}
\theoremstyle{definition}
\newtheorem{Def}[Thm]{Definition}
\theoremstyle{remark}
\newtheorem{Rem}[Thm]{Remark}
\newcommand{\cR}{\mathcal{R}}
\newcommand{\cM}{\mathcal{M}}
\newcommand{\cD}{\mathcal{D}}
\newcommand{\cP}{\mathcal{P}}
\newcommand{\Hexa}{\mathbb{H}}
\newcommand{\Triangular}{\mathbb{T}}
\newcommand{\R}{\mathbb{R}}
\newcommand{\N}{\mathbb{N}}
\newcommand{\Z}{\mathbb{Z}}
\newcommand{\E}{\mathbb{E}}
\newcommand{\eps}{\varepsilon}
\newcommand{\flip}{\mathrm{flip}}
\newcommand{\be}{\begin{equation}}
\newcommand{\ee}{\end{equation}}
\newcommand{\Omfp}{\Omega_{\mathrm{fp}}}
\newcommand{\Npath}{N_{\infty}}
\newcommand{\tri}{\mathcal{T}}
\newcommand{\tricoarse}{\mathsf{Tri}}
\newcommand{\loops}{\mathrm{Loops}}
\newcommand{\paths}{\mathrm{Paths}}
\newcommand{\nocrazyloop}{\mathcal{L}}
\newcommand{\nocrazypath}{\mathcal{P}}
\newcommand{\cluster}{\mathcal{C}^*}
\newcommand{\clusterbis}{\mathcal{D}^*}
\newcommand{\clusters}{\mathrm{Clusters}^*}
\newcommand{\tomega}{\tilde{\randomloop}}
\newcommand{\tgamma}{\tilde{\gamma}}
\newcommand{\tg}{\tilde{g}}
\newcommand{\area}{\mathrm{Area}}
\newcommand{\glue}{\mathcal{G}}
\newcommand{\infclusters}{\clusters_{\infty}}
\newcommand{\tcluster}{\tilde{\cluster}}
\newcommand{\flippable}{\mathcal{F}_{\fzero}}
\newcommand{\Np}{N}
\newcommand{\conditions}{\mathcal{S}}
\newcommand{\fzero }{\mathsf{0}}
\newcommand{\Pdimer}{\mathsf{Dim}}
\newcommand{\Pddimer}{\mathsf{Double}\Pdimer}
\newcommand{\Ploop}{\mathsf{Loop}}
\newcommand{\contradict}{\mathsf{Paral}}
\newcommand{\randomloop}{w}
\renewcommand{\dim}{m}
\newcommand{\dimbis}{\dim'}
\newcommand{\randomdim}{M}
\newcommand{\ddim}{d}
\newcommand{\randomddim}{D}
\newcommand{\NewtonPolygon}{N}
\newcommand{\dualpath}{\gamma^*}
\newcommand{\concatene}{}
\newcommand{\norm}[1]{\left\lVert#1\right\rVert}
\newcommand{\1}{\mathbbm{1}}
\title{On loops in the complement to dimers}
\author{Alexander Glazman}
\address{Department of Mathematics, University of Innsbruck, Technikerstr. 13, A-6020 Innsbruck, Austria}
\email{alexander.glazman@uibk.ac.at}
\author{Lucas Rey}
\address{CEREMADE, Universit{\'e} Paris-Dauphine, Université PSL, CNRS, 75016 Paris, France and DMA, {\'E}cole normale sup{\'e}rieure, Universit{\'e} PSL, CNRS, 75005 Paris, France}
\email{lucas.rey@dauphine.psl.eu}
\begin{document}
\mathtoolsset{showonlyrefs}
\maketitle

\begin{abstract}
	We consider ergodic translation-invariant Gibbs measures for the dimer model (i.e. perfect matchings) on the hexagonal lattice.
	The complement to a dimer configuration is a fully-packed loop configuration: each vertex has degree two.
	This is also known as the loop~$O(1)$ model at~$x=\infty$.
	We show that, if the measure is non-frozen, then it exhibits either infinitely many loops around every face or a unique bi-infinite path.
	
	Our main tool is the flip (or XOR) operation: if a hexagon contains exactly three dimers, one can replace them by the other three edges.
	Classical results in the dimer theory imply that such hexagons appear with a positive density. 
	Up to some extent, this replaces the finite-energy property and allows to make use of tools from the percolation theory, in particular the Burton--Keane argument, to exclude existence of more than one bi-infinite path.
\end{abstract}



\section{Introduction}

The loop~$O(n)$ model is defined on collections of non-intersecting cycles (called {\em loops} in this context) on the hexagonal lattice.
The probability of a configuration has two parameters~$n,x>0$ and depends on the number of loops and edges contained in them.
The model has attracted a lot of attention due to conjectured conformal invariance and connections to a number of other classical models.
For the background on the loop~$O(n)$ model, we refer the reader to~\cite{PelSpi17} for a general survey and to the introduction of~\cite{GlaLam23} for a description of a recent progress.

In the current work, we restrict to~$n=1$.
The loop~$O(1)$ model at~$x=1$ describes boundaries of clusters in the critical site percolation on the triangular lattice.
These boundaries were proven to converge to a random fractal limit called the Conformal Loop Ensemble (CLE) with parameter~$\kappa = 6$ in spectacular works developing the method of Smirnov's parafermionic observable~\cite{Smi01,CamNew06} (see also~\cite{KhrSmi21}).
When~$x\neq 1$, the loop~$O(1)$ model describes the boundaries between clusters of pluses and minuses (so-called {\em domain walls}) in the ferromagnetic ($x<1$) or antiferromagnetic ($x>1$) Ising model on the triangular lattice.
The ferromagnetic Ising model has seen a lot of development, including the seminal proof of conformal invariance at the critical point~$x_c=1/\sqrt{3}$ by Smirnov et al~\cite{Smi10,CheSmi11}: the loops converge to~$\mathrm{CLE}_3$~\cite{CheDumHonKemSmi14,BenHon19}.
When~$1/\sqrt{3}<x<1$, the loop~$O(1)$ model satisfies the Russo--Seymour--Welsh estimates, that is exhibits macroscopic loops at every scale (follows from e.g.~\cite{Tas16}); notably, conformal invariance remains open in this case.
It is classical that the transition in the ferromagnetic Ising model is sharp~\cite{AizBarFer87} (see also~\cite{DumTas15}); thus, the loop~$O(1)$ model exhibits exponential decay when~$0<x<1/\sqrt{3}$.

The antiferromagnetic ($x>1$) Ising model on the triangular lattice is much less understood: when~$x>\sqrt{3}$, no results are available; when~$1<x<\sqrt{3}$, it has been established only recently that the model exhibits either infinitely many loops around every face or a bi-infinite path~\cite{CraGlaHarPel20} (the proof uses the Newman's representation~\cite{New90}). 
In the current work, we prove a similar result for the limiting case~$x=\infty$ that we define as the uniform measure on fully-packed loop configurations.

More precisely, let~$\Hexa$ be the hexagonal lattice with the faces centered at~$\{n+m \cdot e^{\pi i /3} \colon n,m\in\Z^2\}$.
Denote the sets of its vertices, edges, and faces by~$V(\Hexa)$, $E(\Hexa)$, and~$F(\Hexa)$ respectively.
For $V \subset V(\Hexa)$, define the set of edges {\em incident} to~$V$ by~$E_{V} := \{ uv \in E(\Hexa)\colon u \in V\}$ and the {\em edge-boundary} of~$V$ by~$\partial E_V := \{uv \in E(\Hexa) \colon u \in V, v \in V(\Hexa) \setminus V\}$.
A {\em fully-packed} loop configuration on~$V$ is any~$\randomloop \subset E_{V}$ such that each vertex~$u\in V$ has degree two in~$\randomloop$.
The set of all fully-packed loop configurations on~$V$ is denoted by~$\Omfp(V)$.
If~$V=V(\Hexa)$, we use the notation~$\Omfp :=\Omfp(V(\Hexa))$ for brevity.
We also introduce the set of configurations under some boundary conditions~$\eta\in \Omfp$:
\[
    \Omfp(V,\eta) = \{\randomloop \in \Omfp(V) \colon \randomloop \cap \partial E_{V} = \eta \cap \partial E_{V}\}.
\]

\begin{Def}[Loop~$O(1)$ model at~$x=\infty$]\label{def:loop-meas-fully-packed}
	Let $V \subset V(\Hexa)$ be finite.
	The loop $O(1)$ model on $V$ at $x = \infty$ with boundary conditions $\eta \subset \Omega$ is the uniform measure on $\Omfp(V, \eta)$.
	A measure $\Ploop$ on $\Omfp$ is an \emph{infinite-volume Gibbs measure} for the loop $O(1)$ model at~$x = \infty$ if it satisfies the following Dobrushin--Lanford--Ruelle (DLR) conditions:
	for any finite~$V \subset V(\Hexa)$ and~$\eta \in \Omfp$, the restriction on~$E_{V}$ of the measure~$\Ploop$ conditioned on the configuration on~$\partial E_{V}$ to coincide with~$\eta$ is the uniform measure on~$\Omfp(V,\eta)$.
\end{Def}

We formulate our result for infinite-volume Gibbs measures, since this allows to be in a translation-invariant setting. In this setting, a Gibbs measure is \emph{ergodic} if any translation-invariant event has probability $0$ or $1$.
First note that we need to exclude certain pathological examples: for instance, take a delta-measure on the subset of~$E(\Hexa)$ given by all non-vertical edges.
\begin{Def}\label{def:frozen}
    A translation-invariant Gibbs measure $\Ploop$ is \emph{frozen} if there exists $e \in E(\Hexa)$ such that all translates of $e$ are almost surely present under $\Ploop$. When this is not the case, we say that $\Ploop$ is \emph{non-frozen}.
\end{Def}

Let~$\mathfrak{G}$ denote the set of all translation-invariant ergodic non-frozen Gibbs measures for the loop~$O(1)$ model at~$x=\infty$.
\begin{Rem}
    We will prove using dimer tools that this set is non-empty, see Remark~\ref{rem:toroidal:exhaustion} and Appendix~\ref{app:dimers}. 
    For example it contains the weak limit of the uniform measures on $\Omfp(V_n)$ with periodic boundary conditions, where $V_n$ is an exhaustion of $V(\Hexa)$. 
\end{Rem}
The following theorem is the main result of the current paper.

\begin{Thm}\label{thm:k:inf:2}
	Let~$\Ploop \in \mathfrak{G}$. Then, either $\Ploop$-a.s. there are infinitely many loops around every face or $\Ploop$-a.s. there exists a unique bi-infinite path.
\end{Thm}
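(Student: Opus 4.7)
My plan is to first perform two ergodic reductions that turn the theorem into ruling out two disjoint scenarios, then to handle the main one via a Burton--Keane-type argument in which the flip move plays the role of finite energy.

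For the ergodic step: because every vertex of $\randomloop \in \Omfp$ has degree $2$, the configuration decomposes into finite cycles together with bi-infinite paths. Let $N$ denote the number of bi-infinite paths; by translation invariance and ergodicity, $N$ is $\Ploop$-a.s.\ constant in $\{0,1,\dots,\infty\}$. Let $L_f$ denote the number of loops of $\randomloop$ around a face $f$: nested loops around $f_0$ must eventually have diameter exceeding $d(f_0,f')$ and hence surround $f'$ as well, so $L_{f_0}=\infty$ implies $L_{f'}=\infty$ for all $f'$. The event $\{L_f=\infty\}$ is thus translation invariant and has probability $0$ or $1$. The theorem reduces to excluding (i)~$N\ge 2$, and (ii)~$N=0$ with $L_f<\infty$ for every $f$.

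To exclude (i), I would run Burton--Keane with the flip as the finite-energy substitute. A hexagon $H$ is \emph{flippable} when its three alternating edges lie in $\randomloop$ and the six edges leaving $H$ also lie in $\randomloop$; the flip swaps these three in-hexagon edges for the other three, re-pairing the six outgoing strands. The dimer input --- drawn from the appendix referenced in the introduction --- is that flippable hexagons appear with positive density under any non-frozen ergodic Gibbs measure. Given $N\ge 2$, I would combine the DLR property inside a large box with a finite sequence of flips to produce, with positive conditional probability, a \emph{trifurcation}: a vertex whose local neighborhood, after the flips, splits into three disjoint infinite branches belonging to bi-infinite paths. Translation invariance promotes this to a positive density of trifurcations in $\randomloop$, which is incompatible with the standard Burton--Keane boundary count in a large box; this rules out $2\le N<\infty$, and the case $N=\infty$ is excluded by the same counting.

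To exclude~(ii), I would use the flip together with DLR to enlarge the outermost loop around a prescribed face beyond any chosen scale, contradicting the a.s.\ finiteness of $L_f$: under $N=0$ every connected component is a finite loop, and the positive density of flippable hexagons allows, with positive conditional probability, a local surgery that merges the outermost loop around a given face with a loop much farther away. The main technical obstacle is the Burton--Keane construction of trifurcations: because a single flip is highly constrained, only re-pairing six strands in a local rigid way, one must chain flips along a carefully chosen route so that positive conditional probability is preserved in typical configurations; this weaker substitute for finite energy is what makes the argument delicate.
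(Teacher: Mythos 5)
Your high-level outline — ergodic reduction to two scenarios, Burton--Keane with the flip as a finite-energy substitute, and a separate argument for the no-path case — matches the paper's architecture, but there are two genuine gaps that the paper treats with substantially more machinery.

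The first gap is the case $N=2$. You propose to handle all of $N\ge 2$ by creating trifurcations via flips. This cannot work for $N=2$: any finite sequence of flips changes the configuration at finitely many edges, hence preserves the number of bi-infinite paths, hence the number of infinite dual clusters. With $N=2$ there are exactly three infinite dual clusters, and a trifurcation (in the Burton--Keane sense relevant here) requires four infinite dual clusters, one adjacent to the other three. So trifurcations can never be produced when $N=2$, and your density count never gets off the ground. The paper isolates this case in Lemma~\ref{lem:k:diff:2} and uses a different mechanism: using the path-swapping Lemma~\ref{lem:swap} simultaneously in two \emph{far-apart} fixed-size boxes, which reconnects the two parallel paths into a loop of length comparable to the separation. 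The essential point is that the flips are confined to two boxes of fixed size $R$ while the pre-existing bi-infinite paths supply the long-range connection, so the probability cost stays bounded below uniformly in the separation; an inclusion--exclusion step supplies two boxes where the required event occurs jointly. Your trifurcation scheme has no analogue of this for $N=2$.

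The second gap is the $N=0$ case. You want to produce arbitrarily large loops by flip surgery that merges distant loops. But when $N=0$ and all loops are finite, there is no pre-existing long-range structure for the flips to hook onto: bridging the gap between the outermost loop around $\fzero$ and a loop at distance $\cR$ requires a chain of flips whose length grows with $\cR$, and the probability that all of them are admissible decays in $\cR$. This does not yield the uniform-in-$\cR$ lower bound needed to conclude. The paper sidesteps this entirely by importing the double-dimer result of Kenyon--Okounkov--Sheffield (Theorem~\ref{thm:rough}): in the rough phase there are a.s.\ infinitely many double-dimer cycles around any point, and an XOR with such a cycle is measure-preserving on $\Ploop$ and, by Lemma~\ref{lem:XOR:trick}, forces one of the two configurations to have a long loop around the origin, with probability at least $1/2$ independently of the scale. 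Some external input of this nature (rather than a purely local flip argument) seems to be genuinely needed here. Your $N\ge 3$ plan is in the right spirit, but making it precise is exactly the content of the paper's Swapping Lemma, which is delicate for the reasons you flag.
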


It is widely believed that (barred some pathological examples) infinite-volume measures in two dimensions do not exhibit any bi-infinite paths.
So far this has been proven only in several cases: for the Ising model in classical works by Aizenman~\cite{Aiz80} and Higuchi~\cite{Hig81} and more recently for the Potts model when $T<T_c$~\cite{CoqDumIofVel14} and then at~$T_c$~\cite{GlaMan23} (the case~$T>T_c$ is trivial).
The work~\cite{GlaMan23} also applies to the loop~$O(n)$ model and rules out bi-infinite paths when~$n\geq 1$ and~$nx^2 \leq 1$.
That argument goes through the non-coexistence theorem of Zhang and Sheffield~\cite{She05} (see also~\cite[Theorem~1.5]{DumRaoTas19}) that relies on the positive correlation (FKG) inequality for a suitable spin representation.
The loop~$O(1)$ model naturally lacks this property when~$x>1$ (including~$x=\infty$) and this impedes us ruling out existence of a unique bi-infinite path.

In the absence of positive correlation inequalities, we rely on the following correspondence: the complement to any fully-packed loop configurations on~$\Hexa$ is a perfect matching, also called a {\em dimer} configuration.
The dimer model is very well understood due to its determinantal structure revealed by Kasteleyn~\cite{Kas61} and the associated height function introduced by Thurston~\cite{Th90} and proven to converge to the Gaussian Free Field (GFF) by Kenyon~\cite{Ken01,Ken02}.
We provide more details on this height function in Section~\ref{subsec:height:function}, but we mention already now that convergence to the GFF does not tell anything about these loops: they separate heights of different parity.
Although we do not use any mixing properties, we mention that they have been proven recently for the dimer model by Laslier~\cite{Las19}.

Finally, we explain how our result fits into the predicted phase diagram of the loop~$O(n)$ model.
It is believed that the loop~$O(n)$ model exhibits a conformally invariant scaling limit for all~$n\in [0,2]$ and~$x\geq x_c(n)=1/\sqrt{2+\sqrt{2-n}}$~\cite{Nie82,KagNie04}.
Apart from two special points~$n=x=1$ and~$n=1,x=1/\sqrt{3}$ (critical site percolation and critical Ising model on the triangular lattice respectively), this conjecture remains completely open.
In fact, it is not even clear if the scaling limit at~$n=1,x=\infty$ should be CLE(6) like in percolation (and for all~$x>1/\sqrt{3}$ at~$n=1$) or CLE(4) like for the level lines in the dimer model~\cite{BloNie89}.
Our result provides evidence that (barred a unique bi-infinite path that we cannot exclude) the loop~$O(1)$ model at~$x=\infty$ might exhibit a non-trivial scaling limit.
A clear next step would be to establish the Russo--Seymour--Welsh estimates that imply existence of macroscopic loops.

\paragraph{Our approach and tools.}
We combine fundamental results from the dimer theory with a percolation approach to prove our main result.

The loop $O(n)$ at $x = \infty$ is very rigid and, in particular, does not have the insertion tolerance property. 
Nonetheless, we manage to change the topology of bi-infinite paths via a bounded number of local moves: we show this in the Swapping Lemma~\ref{lem:swap}, which is our main technical result.
The proof of this lemma relies on the description of the set of ergodic Gibbs measures on $\Z^2$-periodic graphs from~\cite{She05, KenOkoShe06} and an explicit computation of edge probabilities.
We use this swapping lemma together with a modified version of the Burton-Keane argument of~\cite{BurKea89} to prove that there is at most one bi-infinite path.

In the case where there is almost surely no bi-infinite path, we also combine percolation and dimer techniques to prove that there is infinitely many loops around every point.
From the percolation theory, we borrow the XOR-argument of~\cite{CraGlaHarPel20}.
We combine this argument with the phase diagram of the double dimer model established in~\cite{She05,KenOkoShe06} and, more precisely, the fact that in the rough phase, there is almost surely infinitely many double dimer loops around every point.

\paragraph{Acknowledgments.} 
We would like to thank Benoit Laslier and Béatrice de Tilière for many fruitful discussions, especially regarding mixing properties of the dimer model, the antiferromagnetic Ising model and parts of the Appendix.
This research was funded in whole or in part by the Austrian Science Fund (FWF) 10.55776/P34713.

\section{Notation and the flip operation}\label{sec:notation}

\subsection{Graph notation}
Given a finite set~$A$, we denote the number  of its elements by~$|A|$.
For~$z\in \R^2$, denote the Euclidean distance from~$z$ to~$0$ by~$\norm{z}$.
For $z \in \R^2, r >0$, define $B(z,r):=\{w\in\R^2 \colon \norm{w-z}<r\}$ and~$\partial B(z,r):=\{w\in\R^2 \colon \norm{w-z}=r\}$.
If~$z=0$, we use the shorthand notation~$B(r):=B(0,r)$.

The triangular lattice $\Triangular$ is the dual graph to~$\Hexa$: the vertex~set is $V(\Triangular) = \{n+m \cdot e^{\pi i /3} \colon n,m\in\Z^2\}$ and the edges $E(\Triangular)$ are drawn between vertices at distance $1$ from one another. The faces $x_1x_2x_3 \in F(\Triangular)$ of $\Triangular$ are equilateral triangles.
For a face~$f$ of~$\Hexa$, we denote the set of edges in its boundary by~$\partial f$ and call each of them a {\em side} of~$f$.
We denote by $F(\Hexa)$ the set of faces of $\Hexa$, and by $\fzero $ the face of $F(\Hexa)$ centered at the origin $(0,0)$.
When two distinct faces $f, g \in F(\Hexa)$ share a common edge $e \in \partial f \cap \partial g$, we write $f \sim g$ and $(fg)^{\star}:=e$. 
For $f, g \in F(\Hexa)$, we denote the Euclidean distance between the centers of~$f$ and~$g$ by~$\norm{f-g}$.
Viewing faces of~$\Hexa$ as open hexagons in~$\R^2$, we write for $X \subset \R^2$:
\[
    F(\Hexa) \cap X := \{f \in F(\Hexa), f \cap X \neq \emptyset\}.
\]

For $x,y \in V(\Hexa)$, a \emph{path} $\gamma$ from $x$ to $y$ is a sequence of distinct vertices $x = x_0, x_1, \dots, x_n = y$, where~$x_ix_{i+1}\in E(\Hexa)$ for any~$i=0,\dots,n-1$. For $V \subset V(\Hexa)$, we say that $\gamma$ is a path in $V$ and we write $\gamma \sqsubset V$ if $x_i \in V$ for all $1 \leq i \leq n-1$. Note that we do not require that the extremities are in $V$, hence the non-conventional notation. Similarly, for $f,g \in F$, a \emph{dual path} $\dualpath$ from $f$ to $g$ is a path $f=f_0, \dots, f_n = g$ in the dual graph and for $F \subset F(\Hexa)$ we write $\dualpath \sqsubset F$ if $f_i \in F$ for all $1 \leq i \leq n-1$ . The \emph{concatenation} of two paths (resp. dual) paths sharing an extremity is denoted by $\gamma_1 \concatene \gamma_2$ (resp.~$\dualpath_1 \concatene \dualpath_2$).

\subsection{Flip operation}

For two sets~$A,B$, denote their symmetric difference (or {\em XOR}) by~$A \oplus B$.
The following combinatorial operation is one of our key tools.
\begin{Def}[Flips]\label{def:flip}
	For any~$f\in F(\Hexa), E\subset E(\Hexa)$, define
	\[
		\flip_f(E) := 
		\begin{cases}
			E \oplus \partial f & \text{ if } |E'\cap \partial f| = 3,\\
			E & \text{ otherwise.}
		\end{cases}  
	\]
	This defines an involution~$\flip_f$ on subsets of~$E(\Hexa)$.
	We call it the {\em flip of $f$}.
    If $f_1, \dots f_n \in F(\Hexa)$, we write $\flip_{f_n, \dots, f_1}(E) := \flip_{f_n} \circ \cdots \circ \flip_{f_1}(E)$ for the composition of the flips.
\end{Def}

\subsection{Loops, paths and dual clusters.}
We now introduce some additional notation. For $\randomloop \in \Omfp$, we define the dual configuration $\randomloop^* \subset E(\Triangular)$ that satisfies: any dual edge~$e^* \in E(\Triangular)$ belongs to~$\randomloop^*$ if and only if~$e$ does not belongs to~$\randomloop$.
\begin{Def}[Dual clusters]
	A dual cluster $\cluster \subset F(\Hexa)$ of $\randomloop \in \Omfp$ is a connected component of~$\randomloop^*$.
	For~$f\in F(\Hexa)$, denote the dual cluster containing~$f$ by~$\cluster_\randomloop(f)$, or $\cluster(f)$ when there is no risk of confusion.
	Denote the set of all dual clusters in~$\randomloop$ by $\clusters(\randomloop)$ and the set of all infinite dual clusters by~$\infclusters(\randomloop)$.
	
	Two {\em distinct} clusters $\cluster_1, \cluster_2 \in \clusters(\randomloop)$ are $\emph{adjacent}$ if there exist two adjacent faces~$f,g\in F(\Hexa)$ with $f \in \cluster_1$, $g \in \cluster_2$. In this case, we write $\cluster_1 \sim \cluster_2$.
\end{Def}
Infinite paths of $\randomloop$ are exactly interfaces between adjacent infinite dual clusters.

\begin{Def}
    1) Let~$f \in F(\Hexa)$, $r,R\in\N$ satisfy~$r<R$. Consider a loop~$\ell = (e_0, \dots, e_k)$.
    The {\em branches of $\ell$ in $B(f,R) \setminus B(f,R)$} are the sub-sequences of $\ell$ such that
    \[
        (e_i,e_{i+1},\dots,e_{j-1},e_j) \sqsubset B(f,R) \setminus B(f,r), \quad e_i \cap B(f,r) \neq \emptyset, \quad e_j \not\in B(f,R).
    \]
    
    2) Now consider a bi-infinite path~$\gamma = (\dots,e_{-1},e_0,e_1,\dots)$ oriented from~$e_{-1}$ to~$e_1$.
    The branches of $\gamma$ in $B(f,R) \setminus B(f,R)$ can be defined as in the case of loops.
    Moreover, it will be convenient to define this also when~$R=\infty$:
    if~$\gamma$ intersects~$B(f,r)$, the {\em branches of $\gamma$ in $\R^2 \setminus B(f,r)$} are the two sub-sequences $(\dots, e_{i-1},e_i), (e_j,e_{j+1},\dots) \sqsubset B(f,r)^c$ such that~$e_i,e_j \in B(f,r)$.
    We denote them respectively by $\gamma_{f,R,-}$ and $\gamma_{f,R,+}$. 
\end{Def}
Note that the number of branches in $B(f,R) \setminus B(f,r)$ is always even.
For $f \in F(\Hexa)$, $r < R$ and $\randomloop \in \Omfp$, let 
\begin{equation}\label{eq:def:loops:paths:N}
    \begin{array}{ll}
    \loops_{f, R}(\randomloop) &:= \{\ell \in \loops(\randomloop),~\ell \cap B(f,R)  \neq \emptyset\}\\
    \paths_{f,R}(\randomloop) &:= \{\gamma \in \paths(\randomloop),~\gamma \cap B(f,R) \neq \emptyset\}\\
    N_{f,R}(\randomloop) &= |\paths_{f,R}(\randomloop)|.
    \end{array}
\end{equation}
For brevity, we drop $f$ from the notation when $f = \fzero$, for example we write
\[
	\loops_{R}(\randomloop) := \loops_{\fzero,R}(\randomloop) = \{\ell \in \loops(\randomloop),~\ell \cap B(R) \neq \emptyset\}.
\]
Observe that the notation $N_{f,R}$ coincides with the notation $\Npath$ for the number of infinite paths when $R = \infty$.
For $f \in F(\Hexa)$, $r < R$, we define two events on which respectively \emph{loops are not too long} and \emph{paths are not too wiggly}: 
    \begin{align}
            \label{eq:def:nocrazyloop}
            \nocrazyloop(f,r,R) &:= \{\randomloop \in \Omfp \colon \forall \ell \in \loops_{f,R}(\randomloop) \,\, \ell \text{ has no branche in } B(f,\tfrac{R}{2})\setminus B(f,r)\},\\
            \label{eq:def:nocrazypath}
	        \nocrazypath(f,r,R) &:= \{\randomloop \in \Omfp \colon \forall \gamma \in \paths_{f,R}(\randomloop) \,\, \gamma \text{ has }\leq 2 \text{ branches in } B(f,\tfrac{R}{2})\setminus B(f,r)\}.
    \end{align}
    By translation-invariance, $\Ploop(\nocrazyloop(f,r,R))$ and $\Ploop(\nocrazypath(f,r,R))$ do not depend on $f$.
    Using monotonicity in $R$ and standard arguments, we obtain for all $f \in F(\Hexa)$, $r \in \N$,
    \begin{align}
        \label{eq:incr:Np}
        \Ploop(\Np_{f,R} \geq 1) &\xrightarrow[R \to \infty]{} \Ploop\Big(\bigcup_{R \in \Z_{>0}} (\Np_{f,R} \geq 1)\Big) = \Ploop(\Npath \geq 1)\\
        \label{eq:incr:loop}
        \Ploop(\nocrazyloop(f,r,R)) &\xrightarrow[R \to \infty]{} \Ploop\Big(\bigcup_{R \in \Z_{>r}} \nocrazyloop(f,r,R)\Big) = 1\\
        \label{eq:incr:paths}
        \Ploop(\nocrazypath(f,r,R)) &\xrightarrow[R \to \infty]{} \Ploop\Big(\bigcup_{R \in \Z_{>r}}\nocrazypath(f,r,R)\Big) =1.
	\end{align}

\section{Input from the dimer model}

\subsection{Dimers on the hexagonal lattice}\label{subsec:dimers}

\begin{Def}[Dimer configuration]\label{def:dimer-conf}
	For $V \subset V(\Hexa)$, a \emph{dimer configuration} (or a \emph{perfect matching}) on~$V$ is any~$m\subset E_{V}$ such that each vertex in~$V$ has degree~$1$ in~$m$.
	We denote the set of all dimer configurations on~$V$ by~$\cM(V)$.
	If~$V=V(\Hexa)$, we use the shorthand notation~$\cM:=\cM(V(\Hexa))$.
	For $V \subset V(\Hexa)$, $\dimbis \in \cM$, we define the set of dimer configurations with \emph{boundary conditions} $\dimbis$ as
	\[
	    \cM(V, \dimbis) = \{\dim \in \cM(V) \colon \dim \cap \partial E_{V} = \dimbis \cap \partial E_{V}\}.
	\]
\end{Def}

\begin{Def}[Dimer measure]\label{def:dimers-meas}	
	When $V$ is finite, we denote by $\Pdimer_V$ the uniform probability measure on $\cM(V)$ and by $\Pdimer_{V}^{\dimbis}$ the uniform probability measure on $\cM(V, \dimbis)$. A probability measure $\Pdimer$ on $\cM$ is a \emph{Gibbs measure} if it satisfies the DLR condition: for any finite $V \subset V(\Hexa)$ and~$\dimbis \in \cM$, conditioned on the configuration on~$\partial E_{V}$ coinciding with~$\dimbis$, the restriction of $\Pdimer$ to~$E_{V}$ is~$\Pdimer_V(\,\cdot \, |\,\dimbis \cap \partial E_{V})$.

	We say that~$\Pdimer$ is \emph{translation-invariant} if it is invariant under the action of the translations by $1$ and $e^{\pi i/3}$. It is \emph{ergodic} if translation-invariant events have measure $0$ or $1$. 
\end{Def}

\subsection{Loop $O(1)$ model and dimers.}\label{subsec:loop&dimers}
Let~$V\subset V(\Hexa)$.
By the definition, for any vertex~$v\in V$, the set $E_{V}$ contains all three edges incident to~$v$.
This leads to the following correspondence: the complement of a dimer configuration on~$V$ is a fully packed loop configuration on~$V$, and vice versa.
The same holds also under any boundary conditions. Since the DLR conditions of Definitions \ref{def:loop-meas-fully-packed} and \ref{def:dimers-meas} coincide under this bijection, \emph{Gibbs measures for the loop $O(1)$ model at $x = \infty$ coincide with Gibbs measures for dimers}.

This bijection is also compatible with the flip operation: if $\randomloop = E(\Hexa) \setminus \dim$, then
\[
    \forall f\in F(\Hexa) \quad \flip_f(\randomloop) = E(\Hexa) \setminus \flip_f(\dim).
\]
\begin{Rem}\label{rem:parity}
    This bijection is a special case of the \emph{Fisher correspondence} \cite{Fis66} which is a $1$-to-$2$ measure-preserving map between the loop $O(1)$ model on a graph $G$ with weight~$x>0$ and a certain weighted dimer model on an associated decorated graph $G_F$.
    In the special case~$x = \infty$ some of the edge weights on $G_F$ vanish and we can take $G_F = G$.
\end{Rem}

\subsection{Loop $O(1)$ model and height functions.}\label{subsec:height:function}

The reader can skip this subsection on a first reading since the definition of the height function is not necessary until Appendix~\ref{app:dimers}. We include it here since it can help the familiar reader understand better the link between dimers and the loop $O(1)$ model.

Dimer configurations can be related to two-dimensional surfaces through the concept of \emph{height functions}, introduced by Thurston~\cite{Th90}. 
We first partition $V(\Hexa)$ into two sets of vertices~$WV$ (white vertices) and~$BV$ (black vertices) as follows:
\[
	WV:=\{n+ \tfrac13 + (m+\tfrac13) \cdot e^{\pi i /3} \colon n,m\in\Z\}
	\quad ; \quad
	BV:=\{n - \tfrac13 + (m+\tfrac23) \cdot e^{\pi i /3} \colon n,m\in\Z\}.
\]
\begin{Def}[Height function]\label{def:height:function}
	For~$\dim\in \cM$, define $h_{\dim}\colon F(\Hexa) \to \Z$, such that $h_{\dim}(\fzero )=0$ and, for any two adjacent faces~$f,g\in F(\Hexa)$,
	\[
		h_{\dim}(g)-h_{\dim}(f) = 
		\begin{cases}
			1 - 3\mathbbm{1}_{\partial f \cap \partial g \in \dim} & \text{if } \tfrac{1}{\sqrt{3}}(f\cdot e^{\pi i/6} + g\cdot e^{-\pi i/6}) \in BV,\\
			-1 + 3\mathbbm{1}_{\partial f \cap \partial g \in \dim} & \text{if } \tfrac{1}{\sqrt{3}}(f\cdot e^{\pi i/6} + g\cdot e^{-\pi i/6}) \in WV,
		\end{cases}
	\]
	where~$f,g$ are identified with points on the complex plane placed at their centers.
\end{Def}
It can be checked that this definition is consistent, and that dimer configurations and height functions are in bijection; see e.g. the review \cite{SalTom97}. A Gibbs measure on dimers induces a probability measure on the set of height functions via this bijection.

This definition of the height function provides another perspective on the correspondence of Section~\ref{subsec:loop&dimers}: the interested reader can check the following fact:
$\randomloop = E(\Hexa) \setminus \dim$ is exactly the set of edges separating two faces~$f$ and~$g$ with different parity of the height function, that is $h_{\dim}(f) \neq h_{\dim}(g) \mod 2$.
Equivalently, the height function $h_{\dim}$ has constant parity on a dual cluster $\cluster \in \cluster(\randomloop)$ and opposite parity on two adjacent dual clusters $\cluster_1 \sim \cluster_2 \in \clusters(\randomloop)$.

\subsection{Translation-invariant ergodic non-frozen Gibbs measures.}

In this section, we state several standard results about the dimer model on the hexagonal lattice.
They can be derived from the general theory in a straightforward way and we provide some details in Appendix~\ref{app:dimers}.

Recall the definition of a frozen measure $\Ploop$ from Definition \ref{def:frozen}. By the correspondence of Section \ref{subsec:loop&dimers}, a translation-invariant measure $\Pdimer$ is frozen if and only if $\Pdimer(e)=0$ for all translates of some $e \in E$. 
The set of translation-invariant ergodic dimer Gibbs measures on bipartite $\Z^2$-periodic graphs can be described and parameterized explicitly. These are deep results from \cite{She05} and \cite{KenOkoShe06} which we recall in the restricted setting of the hexagonal lattice in Appendix~\ref{app:dimers}.

Theorem \ref{thm:toroidal:exhaustion} states that translation-invariant ergodic dimer Gibbs measures can be parametrized in a natural way by the triangle in~$\R^2$ with vertices $(0,0),(0,1),(1,0)$. The non-frozen measures correspond exactly to the interior of the triangle.
Theorem~\ref{thm:KOS} expresses these measures explicitly.

\begin{Rem}\label{rem:toroidal:exhaustion}
1) For $k \in \N$, let $\cM_k$ denote the set of dimer configurations which are invariant to the translations by $k$ and $k\cdot e^{\pi i/3}$. By Theorem \ref{thm:KOS} and Corollary \ref{cor:frozen}, the weak limit $\Pdimer^{\mathrm{per}}$ of the uniform measures on $\cM_k$ belongs to the set of translation-invariant non-frozen ergodic Gibbs measures.

2) Let $\Ploop^{\mathrm{per}}$ be the image of $\Pdimer^{\mathrm{per}}$ under the bijection between dimers and fully-packed loops. In other words, $\Ploop^{\mathrm{per}}$ is the weak limit of loop~$O(1)$ measures on tori under periodic boundary conditions (we have not defined them in details). By Section \ref{subsec:loop&dimers} and 1), it is a translation-invariant ergodic non-frozen Gibbs measure for the loop $O(1)$ model at~$x = \infty$.

3) We do not know how to show such weak convergence of loop $O(1)$ measures with periodic boundary conditions when~$1<x<\infty$.
    The case~$x\leq 1$ corresponds to the ferromagnetic Ising model and the seminal works of Aizenman~\cite{Aiz80} and Higuchi~\cite{Hig81}.
    In this case, the loop~$O(1)$ model has a unique Gibbs measure that can obtained as the limit of the measures on tori under periodic boundary conditions.

4) The question of local convergence of the measure loop $O(1)$ measure at $x= \infty$ is much more subtle.
This is equivalent to consider the uniform dimer measure on a general sequence of finite domains exhausting $\Hexa$.
The local details of the boundary of the domain have a strong impact on the model and might not even admit any dimer configuration.
If a sequence of finite subgraphs $V_n$ admitting at least one dimer configuration approximates a smooth open set after rescaling, that is $\frac{1}{n}V_n \to U \subset \R^2$, the local limit of the uniform dimer measure on $V_n$ near a point $u \in U$ depends on $U$ and the local details of the boundary of $V_n$.
This is related to the so-called \emph{limit shape phenomenon}, which has attracted a lot of attention. 
The local convergence of the uniform measure was conjectured by Cohn, Kenyon and Propp in 2001~\cite[Conjecture~13.5]{CohKenPro01} and solved only recently in its full generality by Aggarwal in \cite{Agg23}.

The values of the height function on the boundary of $V_n$ do not depend on the dimer configuration: for all $\dim \in \cM(V_n)$, $f \in \partial V_n$, $h_{\dim}(f) = h(f)$.
The local limit of the uniform dimer measure near $x \in U$ only depends on the boundary condition through the limit $\eta: \partial U \to \R = \lim_{n \to \infty} \frac{1}{n}h_{|\partial V_n}$.
To our knowledge, this boundary condition which is natural from the dimer perspective has no natural interpretation for the loop $O(1)$ model.
\end{Rem}

At the center of our approach is invariance of Gibbs measures to flips (see Definition \ref{def:flip}):

\begin{Lem}\label{lem:flip}
    For all $f \in F(\Hexa)$, any translation-invariant ergodic Gibbs measure $\Pdimer$ is invariant under $\flip_f$, that is 
    $$\flip_f(\Pdimer) = \Pdimer.$$ 
    The measure obtained by performing a flip at $f$ with probability $1/2$ is also equal to $\Pdimer$.
\end{Lem}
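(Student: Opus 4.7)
The plan is to apply the DLR condition on the smallest possible domain around~$f$. Let~$V \subset V(\Hexa)$ consist of the six vertices of the hexagon~$f$. Then $E_V = \partial f \cup \partial E_V$, where $\partial E_V$ is the set of six outward edges emanating from~$f$, so conditioning on the outside reduces the problem to understanding completions of the matching on these six vertices.

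The central combinatorial point is that $\flip_f$ preserves $\cM(V,\dimbis)$ for every boundary datum $\dimbis \in \cM$. Indeed, $\flip_f$ acts nontrivially only on configurations $\dim$ with $|\dim \cap \partial f|=3$; in this case the three edges of $\dim \cap \partial f$ provide six endpoint slots distributed among the six vertices of~$f$, each of which already has degree~$\le 1$ in~$\dim$. Hence these three edges are pairwise disjoint and form a perfect matching of the $6$-cycle $\partial f$, which in the hexagon is necessarily one of the two alternating triples. In particular every vertex of~$f$ is matched inside $\partial f$, so no outward edge lies in~$\dim$; applying $\flip_f$ replaces one alternating triple by the other and leaves the outward edges untouched, producing a new dimer configuration on~$V$ with the same boundary data $\dimbis \cap \partial E_V$. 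Together with $\flip_f \circ \flip_f = \mathrm{Id}$, this yields a bijection of $\cM(V,\dimbis)$ onto itself.

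Given this, the lemma follows at once from the DLR condition: conditionally on $\dim \cap \partial E_V = \dimbis \cap \partial E_V$, the restriction of~$\Pdimer$ to~$E_V$ is uniform on the finite set $\cM(V,\dimbis)$, and any involution preserves a uniform measure. Since $\flip_f$ modifies only edges in~$\partial f \subset E_V$ and acts trivially outside, integrating over the boundary data and the exterior yields $\flip_f(\Pdimer) = \Pdimer$. For the second assertion, the measure obtained by flipping at~$f$ with independent probability~$1/2$ is the mixture $\tfrac12\Pdimer + \tfrac12 \flip_f(\Pdimer)$, which equals~$\Pdimer$ by the first part.

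I do not anticipate any real obstacle. Neither translation invariance nor ergodicity actually enters the argument; these hypotheses are presumably stated for consistency with the way the lemma will be used later in the paper. The only slightly subtle verification is that a ``flippable'' hexagon is automatically disjoint from the outside (i.e.\ has no incident outward dimer), which is what makes the flip preserve the boundary data.
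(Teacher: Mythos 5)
Your proof is correct, and it takes a genuinely different route from the paper's. The paper derives Lemma~\ref{lem:flip} from the classification of translation-invariant ergodic Gibbs measures (Theorem~\ref{thm:toroidal:exhaustion}): it observes that the uniform measures on the toroidal sets $\cM_k(\lfloor sk\rfloor,\lfloor tk\rfloor)$ are flip-invariant (flips are local, hence preserve the height change) and that this invariance passes to their weak limits, which parameterize all such measures. You instead argue directly from the DLR condition on a single hexagon: the observation that a flippable hexagon necessarily uses an alternating triple of $\partial f$, hence has no incident outward dimer, shows that $\flip_f$ is an involution on each fibre $\cM(V,\dimbis)$, and any involution preserves a uniform measure. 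Your approach is more elementary (it avoids Sheffield's theorem entirely) and strictly more general: as you note, it proves flip-invariance for \emph{any} Gibbs measure for dimers, without assuming translation invariance or ergodicity. The paper's route has the minor advantage of reusing machinery already imported for other purposes, but your proof is the cleaner and sharper one for this particular lemma.
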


This is well-known and in Appendix~\ref{app:dimers} we show how to deduce this from Theorem \ref{thm:toroidal:exhaustion}.

\begin{Rem}
	The flip operation and the Glauber dynamics that it naturally generates are classical in the dimer theory.
	A remarkable property (that we do not use) is the \emph{flip accessibility}, known since \cite{Th90}: on a simply connected finite subgraph of the hexagonal lattice with at least one dimer configuration, the Markov chain induced by the Glauber dynamic is irreducible (this is also true for subgraphs of the square lattice, with a different notion of a flip, see for example the review \cite{SalTom97}). In higher dimension, it is a long-standing open question to know wether the Markov chains induced by flips (and some necessary additional moves) is irreducible, see \cite{HLT23} for recent progresses.
\end{Rem}

To be able to use flips, we need to prove that a given hexagon does contain exactly three dimers with a positive probability under any non-frozen measure~$\Pdimer$. 
\begin{Lem}\label{lem:hexagon}
    Let $\Pdimer$ be any translation-invariant non-frozen ergodic Gibbs measure and let $e_1,\dots,e_6$ be the consecutive edges of $\partial \fzero$. 
    Then,
    \begin{equation}
        \Pdimer(e_1,e_3,e_5) > 0.
    \end{equation}
\end{Lem}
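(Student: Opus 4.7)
The strategy is to use the explicit characterization of translation-invariant non-frozen ergodic Gibbs measures provided by Theorem~\ref{thm:KOS}, which describes each such $\Pdimer$ as a determinantal process on dimers, parametrized by a slope $(s,t)$ in the interior of a triangle, with joint edge probabilities expressed as minors of an inverse Kasteleyn matrix $K^{-1}$ admitting an explicit contour-integral representation.

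Since the three edges $e_1, e_3, e_5$ are pairwise vertex-disjoint, Kasteleyn's formula yields
\begin{equation}
\Pdimer(e_1, e_3, e_5 \in \dim) = \left|\det\left( K(w_i, b_i)\, K^{-1}(b_j, w_i) \right)_{i, j \in \{1, 3, 5\}}\right|,
\end{equation}
where $w_i, b_i$ denote the white and black endpoints of $e_i$. The diagonal entries of this $3 \times 3$ matrix equal the single-edge probabilities $\Pdimer(e_i \in \dim)$, which are strictly in $(0, 1)$ for non-frozen $\Pdimer$: in the parametrization of Theorem~\ref{thm:KOS}, an edge probability equal to $0$ or $1$ corresponds to a corner of the triangle, i.e.\ to a frozen measure. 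The off-diagonal entries $K^{-1}(b_j, w_i)$ with $i \neq j$ concern pairs of non-matched vertices among the six vertices of $\partial \fzero$ at specific relative positions and are evaluated directly from the same contour integral.

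The final step is to verify that the resulting $3 \times 3$ determinant is strictly positive throughout the interior of the parameterizing triangle; by the threefold symmetry among $e_1, e_3, e_5$ around $\fzero$ and the explicit form of $K^{-1}$, this reduces to an algebraic check in the slope variables. The main obstacle is keeping track of Kasteleyn signs and normalizations and checking positivity uniformly on the interior of the triangle, avoiding accidental cancellations at non-corner degenerate slopes. Granted the standard Kenyon--Okounkov--Sheffield machinery underlying Theorem~\ref{thm:KOS}, however, this is a routine calculation that fits naturally with the other dimer inputs deferred to Appendix~\ref{app:dimers}.
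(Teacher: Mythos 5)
Your approach is the same as the paper's: invoke the Kenyon--Okounkov--Sheffield parametrization of Theorem~\ref{thm:KOS}, write $\Pdimer(e_1,e_3,e_5)$ as a $3\times 3$ minor of the inverse Kasteleyn matrix via the contour-integral expression, and conclude by checking that this determinant is strictly positive over the interior of the parameter region. The paper carries this out in Corollary~\ref{cor:hexagon:app}, evaluating the integrals to get the explicit formula
\[
\Pdimer(e_1,e_3,e_5)=\frac{1}{\pi^3}\det\begin{pmatrix}-\sin\theta_A & \theta_B & \theta_C\\ \theta_B & -\sin\theta_B & \theta_A\\ \theta_C & \theta_A & -\sin\theta_C\end{pmatrix},
\]
where $\theta_A,\theta_B,\theta_C$ are the angles of the triangle with side lengths $A,B,C$, and observes that positivity follows from $\sin\theta<\theta$.

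The gap is that you never actually carry out the step that constitutes the content of the lemma. You note that the diagonal entries of the $3\times 3$ matrix are the single-edge probabilities, which lie in $(0,1)$ for a non-frozen measure, but this alone says nothing about the determinant: for a determinantal measure, positivity of each edge probability does not prevent the $3\times 3$ minor from vanishing or becoming negative (such cancellations are exactly the mechanism by which three given edges could become almost surely non-simultaneous). You acknowledge this yourself (``avoiding accidental cancellations at non-corner degenerate slopes'' is ``the main obstacle'') and then declare it ``a routine calculation'' without doing it. To close the argument, you need to evaluate the off-diagonal $K^{-1}$ entries from the contour integral -- which is where the angles $\theta_A,\theta_B,\theta_C$ arise -- and exhibit the determinant in a form whose positivity is manifest, e.g.\ via the inequality $\sin\theta<\theta$ as the paper does. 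Until that is done, the proposal is a correct roadmap but not a proof of the inequality $\Pdimer(e_1,e_3,e_5)>0$.
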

This can be obtained from Theorem \ref{thm:KOS} after a few lines of computations: we give a more precise version of this result and its proof in Corollary~\ref{cor:hexagon:app}.

\subsection{Double dimers}\label{subsec:dd}
Given two subsets of edges $A, B \subset E(\Hexa)$, an \emph{alternated cycle} in $(A,B)$ is a cycle $(e_1, \dots, e_{2n})$ in $A \cup B$ such that, for all $1 \leq i \leq n$, $e_{2i} \in A$ and $e_{2i+1} \in B$.
An \emph{alternated infinite path} in $(A,B)$ is an infinite  path $(\dots, e_{-1},e_0, e_1,\dots)$ in $A \cup B$ such that for all $i \in \Z$, $e_{2i} \in A$ and $e_{2i+1} \in B$.
Each edge~$e\in A\cap B$ we call a \emph{doubled edge}.

\begin{Def}[Double dimers]
	The set of \emph{double dimer configurations} on $\Hexa$ is denoted by
	\[
	    \cD = \cD(\Hexa) = \{\ddim = \dim \cup \dim' \colon \dim \in \cM,~\dim' \in \cM\}. 
	\]
	If $\Pdimer$ denotes a Gibbs measure on $\cM$, a probability measure $\Pddimer$ is defined on $\cD$ as the law of
	$$
		\randomddim:= \randomdim \cup \randomdim',
	$$
    where $\randomdim, \randomdim'$ are two random independent dimer configurations with law $\Pdimer$.
\end{Def}
For $\dim,\dim' \in \cM$, the double dimer configuration $\dim \cup \dim'$ is a disjoint union of doubled edges, alternated cycles and alternated infinite paths in $(\dim,\dim')$ spanning all vertices of $\Hexa$. 

\begin{Rem}
	A double dimer configuration has a natural interpretation in terms of height functions: the difference $h_{\dim}-h_{\dim'}$ is a $3$-Lipschitz function on $F(\Hexa)$ taking values in $3\cdot \Z$, equal to zero at the origin and whose level lines are precisely the cycles and infinite paths in the double dimer configurations $\dim \cup \dim'$.
\end{Rem}

Reciprocally, we can sample two independent random dimer configurations using $\Pddimer$ as follows. The following lemma is classical and easy to prove.

\begin{Lem}\label{lem:sample}
	Let $\randomddim$ have the law $\Pddimer$.
	Now, define~$\randomdim$ and~$\randomdim'$ as follows: we put in both~$\randomdim$ and~$\randomdim'$ every edge that is doubled in~$\randomddim$; 
	we put in~$\randomdim$ every other edge of each cycle and infinite path independently for different cycles and paths, and we put in~$\randomdim'$ the remaining edges of these cycles and paths.
	Then, $\randomdim$ and $\randomdim'$ are two independent dimer configurations with law $\Pdimer$.
	In particular, a fully-packed loop configuration~$\randomloop$ defined as the complement to~$M$ has the law~$\Ploop$.
\end{Lem}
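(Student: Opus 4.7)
The plan is to re-express the conclusion as a statement about the conditional law of $(\randomdim, \randomdim')$ given $\randomddim$, and then verify it component-by-component using the DLR property. Since $\randomdim, \randomdim'$ are independent with law~$\Pdimer$, we have $\Proba(\randomdim=m,\randomdim'=m')=\Pdimer(m)\Pdimer(m')$, so it suffices to show that this conditional law is uniform over valid splittings of $\randomddim$ --- equivalently, that for any two pairs $(m,m'),(\tilde m,\tilde m')\in\cM\times\cM$ with $m\cup m'=\tilde m\cup\tilde m'=\ddim$ one has $\Pdimer(m)\Pdimer(m')=\Pdimer(\tilde m)\Pdimer(\tilde m')$. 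The uniform random splitting of the lemma would then obviously be distributed exactly as the true $(\randomdim,\randomdim')$ given $\randomddim$, yielding the statement.

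The canonical decomposition of $\ddim$ into doubled edges, finite alternating cycles, and bi-infinite alternating paths puts valid splittings in bijection with $\{0,1\}^{\mathcal{K}(\ddim)}$, where $\mathcal{K}(\ddim)$ is the set of cycles and bi-infinite paths: doubled edges are forced into both $m$ and $m'$, while each component of $\mathcal{K}(\ddim)$ admits exactly two alternations, and the choice over each component is independent. Any two valid splittings of the same~$\ddim$ therefore differ by a swap along some subset of components, so by a telescoping argument it is enough to prove invariance under the swap of a single component. For a finite alternating cycle $C$, let $V$ be the vertex set of $C$. Then $m$ and $\tilde m$ agree on $E(\Hexa)\setminus E_V$ and both $m|_{E_V},\tilde m|_{E_V}$ are dimer configurations of $V$ with identical values on $\partial E_V$; the DLR condition from Definition~\ref{def:dimers-meas} gives $\Pdimer(m|_{E_V}\mid m|_{E(\Hexa)\setminus E_V})=\Pdimer(\tilde m|_{E_V}\mid m|_{E(\Hexa)\setminus E_V})$, hence $\Pdimer(m)=\Pdimer(\tilde m)$, and symmetrically $\Pdimer(m')=\Pdimer(\tilde m')$.

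The delicate case, and the main obstacle, is the swap along a bi-infinite alternating path $P$: the symmetric difference $m\oplus\tilde m=P$ is infinite, so DLR cannot be applied in one stroke. To handle this I would use the periodic approximation provided by Theorem~\ref{thm:toroidal:exhaustion} (see also Remark~\ref{rem:toroidal:exhaustion}): realize $\Pdimer$ as a weak limit of uniform measures on $\cM_k$, for which every component of the double dimer configuration is finite and the desired conditional uniformity over splittings is automatic. Because the sampling procedure is an almost sure continuous function of $(\randomddim, \epsilon)$, where $\epsilon$ is the independent family of fair coins indexed by $\mathcal{K}(\randomddim)$, and since any event depending only on finitely many edges sees only finitely many components in a large enough window, the identity between the joint law of $(\randomdim,\randomdim')$ and the law produced by the sampling procedure passes to the weak limit. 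Combined with the finite-cycle case, this proves the invariance $\Pdimer(m)\Pdimer(m')=\Pdimer(\tilde m)\Pdimer(\tilde m')$ for arbitrary valid splittings, completing the proof.
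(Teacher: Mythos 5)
The paper calls this lemma ``classical'' and gives no proof, so there is nothing to compare against word for word; your core strategy --- reduce to showing that the conditional law of the splitting given $\randomddim$ is uniform, then verify invariance under a single-component swap via the DLR property --- is indeed the standard argument. For finite alternating cycles your reasoning is correct (modulo the usual caveat that $\Pdimer(m)$ for a fixed infinite configuration is typically $0$, so the displayed identity has to be phrased in terms of regular conditional distributions or cylinder-event probabilities rather than ``densities''; this is routine to fix).

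The third paragraph, however, contains a real gap. The periodic approximation you invoke does not transparently pass to the limit: the decomposition of $\randomddim$ into components is \emph{not} a local function of $\randomddim$, so the splitting map is not continuous for the product topology, and the assertion that ``any event depending only on finitely many edges sees only finitely many components in a large enough window'' is false --- a cylinder event on a window $W$ can be affected by a component that enters $W$ but extends arbitrarily far, and no bounded window resolves which coin governs it. Weak convergence of the uniform measures on $\cM_k$ to $\Pdimer$ therefore does not by itself transfer the splitting identity. The clean way to close this, and what the paper is implicitly relying on (the lemma is invoked in the proof of Theorem~\ref{thm:k:inf:2} immediately after an appeal to Theorem~\ref{thm:rough}), is simply that under the standing hypotheses $\Pdimer$ is a non-frozen translation-invariant ergodic Gibbs measure, and Theorem~\ref{thm:rough} asserts that $\Pddimer$-almost surely $\randomddim$ has \emph{no} bi-infinite alternating paths. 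Hence the only components are doubled edges and finite cycles, the ``delicate case'' never occurs, and your finite-cycle DLR swap already finishes the proof without any approximation argument.
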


The main input that we will need concerning the double dimer model is existence of infinitely many cycles around every point. We say that a cycle $\ell \subset E(\Hexa)$ \emph{surrounds} $X \subset \R^2$ if the continuous closed curve in $\R^2$ obtained by gluing together all edges of $\ell$ separates $X$ from infinity.

\begin{Thm}[Thm~4.8 of~\cite{KenOkoShe06}]\label{thm:rough}
    Let $\Pdimer$ be a non-frozen ergodic translation-invariant Gibbs measure on $\cM$, and $\Pddimer$ be the associated measure on $\cD$.
    Then, if $\randomddim$ has the law $\Pddimer$, there are almost surely infinitely many cycles in $\randomddim$ surrounding $\fzero$. In particular, $\randomddim$ has almost surely no infinite paths.
\end{Thm}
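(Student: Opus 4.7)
The plan is to encode both claims in terms of the dimer height function and reduce them to fluctuation theory in the rough phase. Let $\randomdim, \randomdim'$ be independent with law $\Pdimer$, so that $\randomddim = \randomdim \cup \randomdim'$ has law $\Pddimer$, and define
\[
    H := h_{\randomdim} - h_{\randomdim'} \colon F(\Hexa) \to 3\Z, \qquad H(\fzero) = 0.
\]
Definition~\ref{def:height:function} gives $|H(f) - H(g)| \in \{0, 3\}$ for adjacent faces $f \sim g$, with value $3$ precisely when $(fg)^\star$ belongs to exactly one of $\randomdim, \randomdim'$; equivalently, when $(fg)^\star$ lies on a cycle or infinite path of $\randomddim$. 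Along any simple dual path from $\fzero$ to $f$, $H$ therefore jumps by $\pm 3$ at each crossing of such a level line, so
\[
    |H(f)| \le 3\bigl(N_\circ(\fzero) + N_\circ(f) + N_\bullet(\fzero, f)\bigr),
\]
where $N_\circ(\cdot)$ is the number of cycles of $\randomddim$ surrounding a given face and $N_\bullet(\fzero, f)$ is the number of infinite paths of $\randomddim$ separating $\fzero$ from $f$.

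The analytic input is Theorem~\ref{thm:KOS}: for any non-frozen ergodic translation-invariant $\Pdimer$, the inverse Kasteleyn kernel decays like $\|f - g\|^{-1}$, from which $\Var_{\Pdimer}(h_{\randomdim}(f)) \asymp \log \|f\|$ as $\|f\| \to \infty$, and moreover $H(f)/\sqrt{\Var_{\Pddimer}(H(f))}$ converges in distribution to a non-degenerate Gaussian by the determinantal CLT. In particular $|H(f)| \to \infty$ in $\Pddimer$-probability: for every $M > 0$, $\Pddimer(|H(f)| > M) \to 1$ as $\|f\| \to \infty$.

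The event $\{N_\circ(\fzero) = \infty\}$ is translation-invariant, since two faces share all but finitely many surrounding cycles, hence by ergodicity has probability $0$ or $1$. By ergodicity again, the number $k \in \N \cup \{\infty\}$ of infinite paths of $\randomddim$ is almost surely constant. Suppose for contradiction that $\Pddimer(N_\circ(\fzero) = \infty) = 0$, so $N_\circ(\fzero) < \infty$ almost surely. If additionally $k < \infty$, then $N_\bullet(\fzero, f) \le k$ and $N_\circ(f)$ has the same law as $N_\circ(\fzero)$ by translation invariance, so the right-hand side of the bound on $|H(f)|$ is tight in $f$, contradicting $|H(f)| \to \infty$ in probability. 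Hence in that case $N_\circ(\fzero) = \infty$ almost surely. The absence of infinite paths then follows topologically: cycles of $\randomddim$ are pairwise vertex-disjoint, so cycles around $\fzero$ are totally nested; by local finiteness of $\Hexa$ their diameters diverge; and any bi-infinite path, being vertex-disjoint from each sufficiently large such cycle, would have to lie entirely in its unbounded exterior, leaving no vertex of $\Hexa$ for it to occupy.

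The main obstacle is the remaining case $k = \infty$: if infinitely many bi-infinite alternated paths exist, $N_\bullet(\fzero, f)$ can grow with $\|f\|$ and absorb the divergence of $|H(f)|$, so the height argument alone no longer delivers a contradiction. This is the technical heart of Theorem~4.8 in~\cite{KenOkoShe06} and is handled there via the determinantal structure: the probability that a fixed finite sequence of edges alternates in $(\randomdim, \randomdim')$ is expressible as a product of two minors of the inverse Kasteleyn matrix, and the $\|f\|^{-1}$ decay of that kernel makes the cylinder probability of an alternated bi-infinite path through a given edge vanish in the limit. An alternative closer to the methods of the present paper would use the flip invariance of Lemma~\ref{lem:flip}, applied independently to $\randomdim$ and $\randomdim'$, as a replacement for finite-energy in a Burton--Keane-type density argument ruling out a positive density of disjoint bi-infinite alternated paths.
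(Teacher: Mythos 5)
There is a genuine gap, and you identify it yourself. The paper does not re-derive Theorem~4.8 of~\cite{KenOkoShe06}: its proof of Theorem~\ref{thm:rough} is a one-paragraph reduction that invokes~\cite[Theorem~4.8]{KenOkoShe06} as a black box and merely verifies its hypothesis, namely that any non-frozen ergodic translation-invariant Gibbs measure on $\Hexa$ is rough (Corollary~\ref{cor:phase}, in turn a consequence of Theorem~\ref{thm:KOS}). You instead attempt a direct height-function argument. The inequality $|H(f)| \le 3\bigl(N_\circ(\fzero) + N_\circ(f) + N_\bullet(\fzero, f)\bigr)$ is sound, and together with the divergence in probability of $|H(f)|$ (roughness plus a Gaussian CLT for the height function, both of which still need citations and implicitly use Corollaries~\ref{cor:frozen} and~\ref{cor:phase}), it correctly rules out the configuration where both $N_\circ(\fzero)<\infty$ a.s.\ and the number $k$ of bi-infinite alternated paths is finite.

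What remains is the case $k=\infty$, which your bound cannot touch because $N_\bullet(\fzero,f)$ may grow with $\norm{f}$; you explicitly defer that case either back to~\cite[Theorem~4.8]{KenOkoShe06} or to an untested Burton--Keane-via-flips sketch. That case is precisely the nontrivial content of the cited theorem --- if one already knew that the double dimer configuration has no positive density of bi-infinite alternated paths, the conclusion would be essentially immediate --- so the proposal is not a proof. To match what the paper actually does, you would only have needed to supply Corollary~\ref{cor:phase} (non-frozen $\Rightarrow$ rough on $\Hexa$) and then cite~\cite[Theorem~4.8]{KenOkoShe06}; to replace that citation by a self-contained argument, the ruling out of infinitely many bi-infinite alternated paths would have to be carried out in full, which you have not done.
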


    This statement holds for general $\Z^2$-periodic graphs with \emph{non-frozen} replaced by \emph{rough} (see Appendix \ref{app:dimers} for a definition).
    In Appendix~\ref{app:dimers}, we explain how~\cite[Theorem~4.8]{KenOkoShe06} simplifies to what we see in Theorem~\ref{thm:rough} in our particular case.
In short, this holds because on~$\Hexa$ non-frozen dimer measures are rough (see Corollary~\ref{cor:phase}).

\section{Structure of the proofs and easy implications}

\subsection{Main lemmata and derivation of Theorem~\ref{thm:k:inf:2}}\label{subsec:main results}

For the rest of the article, we fix a translation-invariant non-frozen ergodic Gibbs measure $\Pdimer$. We denote by $\Ploop$ the measure on fully-packed loop configurations~$\Omfp$ obtained as complements to dimers (see Section~\ref{subsec:loop&dimers}).

The number of bi-infinite paths in a loop configuration is clearly invariant to translations.
Thus, by ergodicity of~$\Ploop$, this number is almost surely a constant.
We denote it by~$\Npath\in  \Z_{\geq 0} \cup \{\infty\}$, so that
\begin{equation}\label{eq:def:Npath}
    \Ploop(|\paths(\randomloop)| = \Npath) = 1.
\end{equation}
Since infinite path are exactly interfaces between dual clusters,
\[
	\Ploop(|\infclusters(\randomloop)|= \Npath + 1) = 1.
\]
We also note that loops are interfaces between adjacent finite dual clusters or between a finite dual cluster and an infinite dual cluster.

\begin{Def}
We say that a path $\gamma \in \paths(\randomloop)$ \emph{bounds} a cluster $\cluster \in \clusters(\randomloop)$ if there exists $f \in F(\Hexa)$ with $\cluster = \cluster(f)$ and $\partial f \cap \gamma \neq \emptyset$.
\end{Def}

We now consider a {\em trifurcation} event:
\begin{equation}\label{eq:def:tri}
    \tri = \{\exists \cluster_0, \cluster_1, \cluster_2, \cluster_3 \in \infclusters(\randomloop) \text{ distinct}\colon \forall i \in \{1,2,3\},~\cluster_i \sim \cluster_0\}.
\end{equation}

We now state several lemmata and then show how to derive Theorem~\ref{thm:k:inf:2} from them.

\begin{Lem}\label{lem:k:diff:2}
	$\Npath \neq 2$.
\end{Lem}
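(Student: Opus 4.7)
The plan is to argue by contradiction: assume $\Npath = 2$ almost surely under $\Ploop$, and derive a contradiction from planarity combined with flip-invariance.

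The first step is to extract the topological structure. Under $\Npath = 2$, the loop configuration almost surely contains two vertex-disjoint bi-infinite simple paths $\gamma_1, \gamma_2$, and by Jordan-curve considerations for properly embedded lines in $\R^2$, the complement $\R^2 \setminus (\gamma_1 \cup \gamma_2)$ has exactly three unbounded connected components. These correspond to the three infinite dual clusters: a ``middle'' cluster $\cluster_0$ adjacent to both $\gamma_1$ and $\gamma_2$, and two ``outer'' clusters $\cluster_+$ and $\cluster_-$, each adjacent to only one of the two paths. In particular $\cluster_+ \not\sim \cluster_-$, since $\cluster_0$ separates them.

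The second step is large-scale control. For large $R$, by \eqref{eq:incr:Np} and \eqref{eq:incr:paths}, with probability tending to $1$ both paths intersect $B(R)$ and each has exactly two branches outside $B(R)$. On this event $\partial B(R)$ is met by the paths in four points $a_1, a_2, a_3, a_4$ which appear in \emph{non-alternating} cyclic order: the two crossings of $\gamma_1$ are consecutive, and so are those of $\gamma_2$. (The alternating case is ruled out since two simple vertex-disjoint paths in a disk with alternating boundary endpoints must cross by a Jordan-curve argument inside the disk.)

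The core of the argument uses the flip operation (Lemma~\ref{lem:flip}) together with the positive probability of flippable hexagons (Lemma~\ref{lem:hexagon}) and the Swapping Lemma~\ref{lem:swap}. The goal is to exhibit, with positive probability under the assumption $\Npath = 2$, a flippable hexagon $f$ in $\cluster_0$ two of whose three far-side neighbors across the loop edges of $\partial f$ lie one in $\cluster_+$ and one in $\cluster_-$. Performing the flip at such $f$ then connects $\cluster_+$ and $\cluster_-$ through $f$ into a single dual cluster, thereby reducing the number of infinite dual clusters from $3$ to $2$, and hence $\Npath$ from $2$ to $1$. Since the flip preserves the measure $\Pdimer$, the probability of $\{\Npath = 1\}$ after the flip is at least the positive probability of this event — contradicting $\Ploop(\Npath = 2) = 1$.

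The main obstacle I anticipate is producing such a ``bi-adjacent'' flippable hexagon $f$ with positive probability. In a typical realization of $\Npath = 2$ the middle cluster $\cluster_0$ has macroscopic width, so one cannot expect to find such an $f$ directly. Here the Swapping Lemma is invoked: it allows, via a bounded composition of flips (which still preserves $\Pdimer$), to pinch the configuration in a suitable annulus containing $\partial B(R)$ so as to bring the two paths together through a sequence of local moves, forcing a flippable hexagon that simultaneously touches $\cluster_+$ and $\cluster_-$. After this pinching, one further flip at the resulting hexagon produces the merging described above. Making the pinching-and-merging precise is the technical heart; the rest of the argument reduces to standard planarity and the flip-invariance of $\Pdimer$.
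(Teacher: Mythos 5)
Your plan is to find a flippable hexagon whose flip merges $\cluster_+$ and $\cluster_-$ and thereby reduces $\Npath$ from $2$ to $1$. This cannot work: $\Npath$ is invariant under any finite modification of a fully-packed configuration. If $\randomloop,\randomloop'\in\Omfp$ agree outside a bounded region, then the half-infinite strands in the complement of that region are identical, and $\Npath$ is half the number of such strands; the paper makes this precise point just before Lemma~\ref{lem:no:tri} (``any two~$\randomloop, \randomloop'\in \Omfp$ differing at finitely many edges have the same number of bi-infinite paths''), and explicitly warns that insertion-tolerance arguments do not apply here. In particular no single flip and no bounded composition of flips can change $\Npath$, regardless of whether you first ``pinch'' the configuration with the Swapping Lemma, and regardless of how you choose the bi-adjacent hexagon. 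If you actually trace through a flip at a hexagon whose loop edges meet both $\gamma_1$ and $\gamma_2$, you will see that the two paths get rewired (possibly absorbing a loop), but one still ends up with exactly two bi-infinite paths; the number of infinite dual clusters stays at $\Npath + 1 = 3$, only the way they are arranged changes. So the ``merging'' step at the heart of your argument is a topological impossibility, and it is precisely the difficulty this lemma has to work around.

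The paper's proof respects this invariance and uses a different mechanism. It fixes a scale $R$, defines the event $\conditions$ that the two paths are near the origin, not glued, and the configuration is regular up to scale $R$; by Lemma~\ref{lem:unglue} and Equations~\eqref{eq:incr:loop}--\eqref{eq:incr:paths} one has $\Ploop(\conditions)>0$. An inclusion-exclusion pigeonhole then shows that for every $\cR$ there is some face $f$ with $|f-\fzero|\geq\cR$ such that $\conditions$ holds simultaneously at $\fzero$ and at $f$ with probability bounded below by a constant depending only on $\Ploop(\conditions)$ (not on $\cR$). Applying the Swapping Lemma \emph{twice}, once around $\fzero$ and once around $f$, keeps $\Npath=2$ but splices the two segments of $\gamma_1$ and $\gamma_2$ between the two swap locations into a \emph{closed loop} of length at least $\cR-2R$ intersecting $B(R)$. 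By flip-invariance of the measure, this loop exists with probability $\geq c(R)>0$ uniformly in $\cR$; letting $\cR\to\infty$ contradicts almost-sure finiteness of loops. Your preliminary observations (the three-cluster structure $\cluster_0,\cluster_+,\cluster_-$; the non-alternating cyclic order of the four boundary crossings) are correct and essentially encode the ``parallel'' hypothesis of Lemma~\ref{lem:swap}, but the crucial device — two swaps at widely separated scales, used to manufacture arbitrarily long loops rather than to change $\Npath$ — is missing from your argument.
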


\begin{Lem}\label{lem:k:inf:3}
	Assume that~$\Npath \in \Z_{\geq 3} \cup \{\infty\}$. Then, $\Ploop(\tri) > 0$. 
\end{Lem}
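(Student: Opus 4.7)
I plan to combine flip invariance of $\Ploop$ (Lemma~\ref{lem:flip}) with a planar rewiring argument to produce the trifurcation event with positive probability.

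First, using the hypothesis $\Npath \in \Z_{\geq 3} \cup \{\infty\}$ together with the monotone convergences~\eqref{eq:incr:Np}--\eqref{eq:incr:paths}, I would choose $r < R$ large enough so that the event
\[
    E := \{N_{\fzero,R}\geq 3\} \cap \nocrazypath(\fzero,r,R) \cap \nocrazyloop(\fzero,r,R)
\]
has probability $p>0$. On $E$, three distinct bi-infinite paths cross $B(R)$, each has exactly two branches in $B(R/2)\setminus B(r)$, and no finite loop has a branch in this annulus. The six branches of the three paths thus cross $\partial B(r)$ at six distinct points, and the way these points are grouped by the bi-infinite path they belong to defines an \emph{outside pairing} entirely determined by the configuration outside $B(r)$. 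The configuration inside $V := V(\Hexa) \cap \overline{B(r)}$ in turn induces an \emph{inside pairing} of the same six points through the in-$B(r)$ pieces of the three paths, and the combined pairing determines the adjacency graph of the four infinite dual clusters produced by the three paths.

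The heart of the proof is then a finite planar case analysis: for every admissible outside pairing of six cyclically ordered points, there exists at least one non-crossing inside pairing whose combination with the outside pairing makes one of the four infinite dual clusters adjacent to each of the other three, thus placing the configuration in $\tri$. To argue that this target inside pairing is realized by a configuration reachable by a finite flip sequence inside $V$, I would invoke the flip accessibility of the hexagonal dimer model on the simply connected finite region $V$ (mentioned in the remark after Lemma~\ref{lem:flip}): any two dimer configurations on $V$ agreeing on the boundary edges are connected by finitely many flips inside $V$. Since $\Ploop$ is invariant under each such flip by Lemma~\ref{lem:flip}, the image of $E$ under this flip sequence is contained in $\tri$ and has probability at least $p > 0$, yielding $\Ploop(\tri) > 0$.

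The main obstacle I expect is the planar case analysis: one must verify that for every admissible outside pairing there is a non-crossing inside pairing producing a degree-$3$ vertex in the cluster-adjacency tree, and that the corresponding dimer configuration inside $V$ is compatible with the boundary data so flip accessibility actually reaches it. An additional subtlety arises when $\Npath \geq 4$ or $\Npath = \infty$, where several extra paths cross $B(R)$ and the rewiring must leave their infinite-cluster identification intact; this can be handled by taking $r$ small compared to $R$ and concentrating the flips in a smaller sub-ball. A cleaner packaging may emerge once the Swapping Lemma announced in the introduction is available.
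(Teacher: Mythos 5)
Your approach via flip accessibility is a genuinely different route from the paper's, which constructs an explicit bounded-length flip sequence (the Swapping Lemma~\ref{lem:swap}) and precedes it with an ungluing step (Lemma~\ref{lem:unglue}). However, your argument has a real gap precisely where that ungluing step sits. Flip accessibility on $V = V(\Hexa)\cap\overline{B(r)}$ lets you move between any two dimer configurations that agree on $\partial E_V$, so the set of reachable inside pairings is exactly the set of pairings realized by \emph{some} configuration with that fixed boundary. Your ``planar case analysis'' asserts this set always contains a pairing producing a trifurcation, but this can fail: if two of the three paths are \emph{glued} inside $B(r)$ (no face of $B(r)$ lies strictly between them), then every dimer configuration with the given boundary data keeps them glued, and the cluster-adjacency tree of the four regions remains a path $P_4$ rather than a star $K_{1,3}$. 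No sequence of flips inside $V$ can change this, because the boundary dimer data entirely forces the inside pairing near the glued strip. The rigidity here is qualitatively different from, say, Bernoulli percolation: you cannot ``insert'' anything; flips can only move what is already there, and a degenerate boundary leaves nothing to move. Ruling out the glued situation with positive probability is exactly what the paper's Lemma~\ref{lem:unglue} does, using the flippability bound of Lemma~\ref{lem:hexagon} coming from the non-frozen hypothesis. To close your gap you would need to reproduce essentially this input.

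Two smaller remarks. First, the event $\{N_{\fzero,R}\geq 3\}$ does not force all three paths to reach $B(r)$, so your claim that ``the six branches of the three paths cross $\partial B(r)$ at six distinct points'' requires replacing it by $\{N_{\fzero,r}\geq 3\}$ (together with $\nocrazypath(\fzero,r,R)$ to guarantee exactly two branches per path). Second, the ``finite planar case analysis'' is left entirely unverified: among the five non-crossing pairings of six cyclically ordered points, only two (the ``consecutive-pairs'' matchings $(12)(34)(56)$ and $(16)(23)(45)$) yield the star tree $K_{1,3}$, and one must further check that at least one of these is realized by a dimer configuration compatible with the boundary height data on $\partial V$. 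This compatibility is where the gluing obstruction reappears. These points are repairable, but the ungluing is the essential missing ingredient, and once you supply it you are effectively reconstructing the paper's Lemma~\ref{lem:unglue} and a weak form of Lemma~\ref{lem:swap}.
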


Lemmata~\ref{lem:k:diff:2} and~\ref{lem:k:inf:3} constitute the main steps in our proofs.
We first describe the path swapping argument in Section~\ref{sec:swap} and then derive the lemmata in Section~\ref{sec:main:proof}.
Note that any two~$\randomloop, \randomloop'\in \Omfp$ differing at finitely many edges have the same number of bi-infinite paths.
Thus, a standard approach in percolation theory via insertion tolerance does not apply in Lemma~\ref{lem:k:diff:2}.
However, the flip operation does allow to change the configuration and construct the trifurcation event~$\tri$, which is ruled by the next lemma:

\begin{Lem}\label{lem:no:tri}
    $\Ploop(\tri) = 0$.
\end{Lem}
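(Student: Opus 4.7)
The plan is to prove Lemma~\ref{lem:no:tri} by contradiction via a Burton--Keane style argument, in which the Swapping Lemma~\ref{lem:swap} plays the role of the finite-energy property that our rigid loop model lacks. Call a face $f \in F(\Hexa)$ a \emph{trifurcation face} of $\randomloop$ if $f$ belongs to some infinite dual cluster $\cluster_0$ and there exist three infinite paths in $\cluster_0 \subset F(\Triangular)$ starting at $f$, pairwise vertex-disjoint except at $f$, each bordering a distinct infinite dual cluster $\cluster_1, \cluster_2, \cluster_3$ adjacent to $\cluster_0$. Assume for contradiction that $\Ploop(\tri)>0$.

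Step one: on $\tri$, the cluster $\cluster_0$ is adjacent to three distinct infinite clusters, and by \eqref{eq:incr:loop}--\eqref{eq:incr:paths} together with translation invariance, for $R$ large enough one finds with probability at least some $\delta>0$ a witness configuration inside $B(R)$: three disjoint arcs in $\cluster_0$ running from near $\fzero$ to $\partial B(R)$, each continuing inside distinct infinite clusters bordering three distinct infinite dual clusters adjacent to $\cluster_0$, with all loops and paths intersecting $B(R/2)$ exiting $B(R)$ in a controlled way. Step two: apply the Swapping Lemma~\ref{lem:swap}, combined with the positivity of three-dimer hexagons from Lemma~\ref{lem:hexagon}, to perform a bounded sequence of flips inside $B(R)$ that reroutes these three arcs so that they emanate simultaneously from the single face $\fzero$. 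Because flips preserve $\Ploop$ by Lemma~\ref{lem:flip} and the required sequence of flippable hexagons occurs with a probability bounded below uniformly in the configuration outside $B(R)$, we obtain
\[
\Ploop(\fzero \text{ is a trifurcation face}) \geq \delta' > 0
\]
for some $\delta'>0$.

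Step three is the deterministic Burton--Keane counting. Let $\mathcal{T}_N$ denote the number of trifurcation faces in $B(N)$. Translation invariance gives $\E_{\Ploop}[\mathcal{T}_N] \geq \delta' \cdot |F(\Hexa) \cap B(N)| = \Omega(N^2)$. On the other hand, we claim the deterministic bound $\mathcal{T}_N \leq C\cdot|F(\Hexa) \cap \partial B(N)| = O(N)$. Indeed, fix any infinite dual cluster $\cluster_0$ and extract a spanning subtree of $\cluster_0 \cap B(N)$ containing, for each trifurcation face inside, the three disjoint branches of $\cluster_0$ leaving through $\partial B(N)$; each trifurcation face is a degree-$\geq 3$ internal node of this tree and each exit on $\partial B(N)$ is a leaf. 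A standard tree inequality (as in \cite{BurKea89}) bounds the number of degree-$\geq 3$ internal nodes by the number of leaves, and summing over infinite clusters intersecting $B(N)$ gives $\mathcal{T}_N = O(N)$. Comparing $\Omega(N^2)$ with $O(N)$ gives a contradiction for $N$ large.

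The main obstacle is step two: extracting a positive-probability trifurcation at the fixed face $\fzero$ from the mere existence of the event~$\tri$. Without finite energy, one cannot directly insert a local configuration, and flipping a single hexagon only performs a very restricted topological change. The Swapping Lemma is precisely tailored to this issue, allowing a bounded-radius rearrangement of bi-infinite paths; its combination with Lemma~\ref{lem:hexagon} (providing a flippable hexagon at any prescribed face with positive probability) is exactly what we need to route the three arcs through $\fzero$ with uniformly positive probability.
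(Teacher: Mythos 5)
Your proposal diverges from the paper in a way that introduces a genuine gap, and it also misallocates the role of the Swapping Lemma.

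The paper's proof of Lemma~\ref{lem:no:tri} makes \emph{no use whatsoever} of the Swapping Lemma~\ref{lem:swap}, of Lemma~\ref{lem:hexagon}, or of any flip operation. The swapping machinery is used only in Lemmata~\ref{lem:k:diff:2} and~\ref{lem:k:inf:3}, to \emph{produce} trifurcations starting from $\Npath\ge 3$; it is not needed to \emph{rule them out}. To show $\Ploop(\tri)=0$ the paper works with a \emph{coarse} trifurcation: a face $f$ is an $r$-coarse trifurcation if $\cluster(f)\setminus\cluster_r(f)$ has at least three infinite components, where $\cluster_r(f)$ is the component of $f$ in $\cluster(f)\cap B(r)$. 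Starting from $\Ploop(\tri)>0$, translation-invariance alone gives $\Ploop(\tri(f))=p>0$ for every $f$; intersecting with the $\tri(f,r_0)$, $\nocrazyloop$, and $\nocrazypath$ events (using monotonicity, equations \eqref{eq:incr:loop}--\eqref{eq:incr:paths}) yields a uniformly positive density of $r$-coarse trifurcations, with no surgery on the configuration at all. The counting is then done through compatible $3$-partitions and Grimmett's Lemma~\ref{lem:grimmett}, which is the standard reformulation of your tree argument that one uses precisely when the branching occurs at a ball $B(f,r)$ rather than at a single face.

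The gap in your Step~2 is that the Swapping Lemma cannot do what you are asking of it. Lemma~\ref{lem:swap} exchanges the tails of two \emph{parallel bi-infinite paths} in $\randomloop$ under a list of hypotheses ($\nocrazyloop$, $\nocrazypath$, not glued, etc.); the objects you want to reroute are \emph{arcs inside a dual cluster} $\cluster_0\subset F(\Hexa)$, i.e. paths in the triangular lattice built from absent edges, and the lemma says nothing about these. More importantly, the paper's argument shows you do not need to place a trifurcation at a \emph{fixed} face $\fzero$ at all: passing to coarse trifurcations costs only a constant factor in the density and sidesteps the lack of finite energy entirely. So the substitute for finite energy in this Burton--Keane step is the coarsening, not the Swapping Lemma. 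Your Step~3 (tree/leaf count) is a correct alternative to the $3$-partition formulation and would work once Step~2 is fixed, but as written, Step~2 neither follows from the tools you cite nor matches the paper's route.
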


Lemma~\ref{lem:no:tri} is proven in Subsection~\ref{sec:burton-keane} via the classical Burton-Keane argument~\cite{BurKea89}.

We now show how to derive Theorem~\ref{thm:k:inf:2} from the lemmata stated above.
We use the XOR argument of~\cite{CraGlaHarPel20} and rely on the input from the double-dimer model provided in Theorem \ref{thm:rough}.

\begin{Lem}[\cite{CraGlaHarPel20}, Lemma 1.5]\label{lem:XOR:trick}
	Let~$\randomloop$ be a loop configuration without bi-infinite paths, i.e. a collection of non-intersecting cycles on~$\Hexa$.
	Take any~$r\in \N$.
	Then, for any circuit~$\Gamma$ that surrounds~$B(r)$, either~$\randomloop$ or~$\randomloop\oplus\Gamma$ has a loop of diameter at least~$r$ that surrounds~$\fzero$.
\end{Lem}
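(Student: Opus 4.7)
The plan is to argue by contradiction using a winding/parity count along a path from the origin to infinity. Suppose that neither $\randomloop$ nor $\randomloop \oplus \Gamma$ has a loop of diameter $\geq r$ surrounding $\fzero$. The strategy is to show that, on one hand, the collection of loops enclosing $\fzero$ in the two configurations coincides, while on the other hand their cardinalities must differ in parity.

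I would first verify that $\randomloop \oplus \Gamma$ is again a disjoint union of simple cycles (and isolated vertices), so that ``loop'' retains its meaning on both sides. At every vertex $v \notin \Gamma$ the degree is unchanged, equal to $2$. At $v \in \Gamma$, the key input is that $\Hexa$ has maximum degree $3$: two of the three edges at $v$ lie in $\Gamma$, and $\deg_\randomloop(v) = 2$ forces at least one of these two $\Gamma$-edges to already belong to $\randomloop$; a short case check then gives $\deg_{\randomloop \oplus \Gamma}(v) \in \{0, 2\}$.

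Next I would record the geometric observation: if $\ell$ is a simple closed curve that surrounds $\fzero$ and has $\mathrm{diam}(\ell) < r$, then $\ell \subset B(r)$. Indeed, taking $p \in \ell$ farthest from the origin, the opposite ray from the origin must cross $\ell$ since $\ell$ encloses the origin, which yields $\mathrm{diam}(\ell) \geq \|p\|$ and hence $\|p\| < r$. Since $\Gamma$ is disjoint from $B(r)$, any loop lying in $B(r)$ shares neither edges nor vertices with $\Gamma$, and therefore is a loop of $\randomloop$ if and only if it is a loop of $\randomloop \oplus \Gamma$. Under the contradiction hypothesis, every loop enclosing $\fzero$ in either configuration has diameter $<r$ and thus lies in $B(r)$, so the two collections of such loops are literally the same.

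The last step is the parity count. Fix any piecewise linear curve $\alpha$ from the origin to infinity that crosses edges of $\Hexa$ transversally. For any disjoint union of simple cycles $S$, the quantity $|\alpha \cap S|$ has the same parity as the number of cycles of $S$ surrounding $\fzero$, because a simple cycle is crossed an odd number of times exactly when it encloses the basepoint. Since $\Gamma$ surrounds $\fzero$, $|\alpha \cap \Gamma|$ is odd; combined with the trivial identity $|\alpha \cap (\randomloop \oplus \Gamma)| \equiv |\alpha \cap \randomloop| + |\alpha \cap \Gamma| \pmod{2}$, this shows that the numbers of loops surrounding $\fzero$ in $\randomloop$ and in $\randomloop \oplus \Gamma$ have opposite parities, contradicting the equality from the previous paragraph. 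I do not anticipate a serious obstacle; the only slightly delicate point is the degree bookkeeping at vertices of $\Gamma$, which uses in an essential way that $\Hexa$ has maximum degree $3$.
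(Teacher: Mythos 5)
Your parity argument is a legitimate alternative to the constructive route the paper sketches (XOR $\Gamma$ with the loops of $\randomloop$ that meet it, then observe that the resulting cycle still has odd winding around $\fzero$, forcing a long loop). Both proofs ultimately hinge on the mod-$2$ winding of $\Gamma$ around $\fzero$; yours counts, theirs constructs. Your degree bookkeeping at vertices of $\Gamma$ and the geometric fact that a loop of diameter $< r$ surrounding $\fzero$ must lie in $B(r)$ are both correct.

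There is, however, a real gap in the parity step. Since $\alpha$ runs from $\fzero$ to infinity and $\randomloop$ contains infinitely many loops, the crossing count $|\alpha \cap \randomloop|$ is in general \emph{infinite} --- $\alpha$ may cross infinitely many loops that do not encircle $\fzero$ --- and the congruence $|\alpha \cap (\randomloop \oplus \Gamma)| \equiv |\alpha \cap \randomloop| + |\alpha \cap \Gamma| \pmod{2}$ then compares two infinities. The repair is to localise: only finitely many loops of $\randomloop$ share an edge with $\Gamma$ (by the same degree-$3$ pigeonhole you already invoke, any loop of $\randomloop$ through a vertex of $\Gamma$ must use one of the two $\Gamma$-edges there). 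Let $M$ be their union and $M' := M \oplus \Gamma$. Then $M$ and $M'$ are finite, the loops of $\randomloop$ and of $\randomloop \oplus \Gamma$ outside $M$ and $M'$ coincide, and your parity argument applied to the finite sets $M$ and $M'$ yields the contradiction cleanly. A second, smaller omission: degree $0$ or $2$ at every vertex does not by itself exclude bi-infinite components of $\randomloop \oplus \Gamma$, which would invalidate the ``odd crossings iff encloses the basepoint'' dichotomy; add one line noting that $\randomloop \oplus \Gamma$ agrees with $\randomloop$ outside a bounded region, so any unbounded component would produce one in $\randomloop$, contrary to hypothesis.
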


We provide a sketch of the proof of Lemma~\ref{lem:XOR:trick} for intuition:
if all loops in~$\randomloop$ that surround~$\fzero$ are short, then none of them intersects~$\Gamma$;
XORing $\Gamma$ with loops that do not surround~$\fzero$ creates a new loop $\Gamma'$ (perturbation of~$\Gamma$) that surrounds~$\fzero$ and intersects~$\Gamma$, whence~$\Gamma'$ is long.

\begin{proof}[Proof of Theorem~\ref{thm:k:inf:2} given Lemmata~\ref{lem:k:diff:2}-\ref{lem:no:tri}]
	By Lemmata~\ref{lem:k:diff:2}-\ref{lem:no:tri}, $\Npath =0,1$.
	If~$\Npath=1$, we are done.
	Assume $\Npath = 0$ and take any~$r\in \N$. 
	By Theorem \ref{thm:rough}, $\Pddimer$-a.s. there is a cycle surrounding $B(r)$.
	Let~$\Gamma:= \{e_1, e_2, \dots, e_{2n}\}$ be innermost such cycle.
	Then, by Lemma~\ref{lem:sample}, the XOR with~$\Gamma$ is a measure-preserving operation on~$\Ploop$.
	Together with Lemma~\ref{lem:XOR:trick}, this implies 
	\[
		\Ploop(\randomloop \text{ contains a loop of diameter } \geq r \text{ surrounding } 0) \geq \tfrac{1}{2}.
	\]
	Since~$r\in\N$ is arbitrary and~$\Ploop$ is ergodic, we get the desired statement.
\end{proof}

\subsection{Ruling out trifurcation points: Burton--Keane argument}
\label{sec:burton-keane}
Our proof of Lemma~\ref{lem:no:tri} below closely follows~\cite[Section 8.2]{Gri99a}.
In particular, we rely on~\cite[Lemma~8.5]{Gri99a} that proves a general statement about $3$-partitions.
A {\em $3$-partition} $\Pi = \{P_1,P_2,P_3\}$ of a finite set $S$ with $|S| \geq 3$ is a partition of $S$ into three distinct non empty sets $P_1,P_2,P_3$. Two $3$-partitions $\{P_1,P_2,P_3\}$ and $\{P_1',P_2',P_3'\}$ are {\em compatible} if there exists an ordering of their elements such that $P_2' \cup P_3' \subset P_1$, or equivalently $P_2 \cup P_3 \subset P_1'$. A family of $3$-partitions of a set $S$ is compatible if any two distinct $3$-partitions in the family are compatible.

\begin{Lem}[\cite{Gri99a}, Lemma 8.5]\label{lem:grimmett}
	If $\cP$ is a compatible family of $3$-partitions of a finite set $S$ with $|S| \geq 3$, then $|\cP| \leq |S|-2$.
\end{Lem}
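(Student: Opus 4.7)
The plan is to proceed by induction on $|S|$. The base case $|S|=3$ is immediate because the only 3-partition of a 3-element set is the partition into singletons, so $|\cP|\leq 1 = |S|-2$.

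For the inductive step, fix any $\pi_0=\{A_1,A_2,A_3\}\in\cP$. The first key step is the observation that for every $\pi \in \cP\setminus\{\pi_0\}$, compatibility with $\pi_0$ forces a unique index $i(\pi)\in\{1,2,3\}$ such that two of the three parts of $\pi$ lie inside $A_{i(\pi)}$; uniqueness is pigeonhole, since two different $A_j$'s each containing two parts of $\pi$ would force at least four parts of $\pi$. This splits $\cP\setminus\{\pi_0\}$ into three subfamilies $\cP_1,\cP_2,\cP_3$ according to the value of $i(\pi)$.

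Next, for each $i$ and each $\pi\in\cP_i$ with parts $\{X,Y,Z\}$ where $Y,Z\subset A_i$ and $X\supset S\setminus A_i$, I would define the restriction $\pi|_{A_i}:=\{X\cap A_i,\,Y,\,Z\}\setminus\{\emptyset\}$, a partition of $A_i$ into 2 or 3 non-empty blocks. A short case analysis of the possible orderings in the compatibility condition shows that (i) the map $\pi\mapsto\pi|_{A_i}$ is injective on $\cP_i$, and (ii) the restrictions form a compatible family on $A_i$ in the natural generalised sense. Applying the induction hypothesis verbatim would give only $|\cP_i|\leq|A_i|-2$ and hence $|\cP|\leq 1+\sum_{i=1}^{3}(|A_i|-2)=|S|-5$, which is too weak; the slack is precisely due to those $\pi\in\cP_i$ whose restriction is only a $2$-partition (the case $X\cap A_i=\emptyset$).

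The main obstacle is therefore the clean handling of this mix of 2- and 3-partitions, and I would resolve it by strengthening the induction hypothesis: any compatible family $\mathcal{Q}$ of partitions of $T$ into 2 or 3 blocks satisfies $|\mathcal{Q}|\leq|T|-1$ whenever $|T|\geq 2$. The same inductive scheme goes through, with the base case $|T|=2$ admitting only the trivial 2-partition. Granting this strengthened bound, $|\cP_i|\leq |A_i|-1$ for each $i$, and
\[
    |\cP| \;=\; 1+\sum_{i=1}^{3}|\cP_i| \;\leq\; 1+\sum_{i=1}^{3}(|A_i|-1) \;=\; |S|-2.
\]
A more conceptual alternative that I would pursue in parallel is to realise the whole family as a tree with leaves labelled by $S$ and one internal vertex of degree exactly $3$ per $\pi\in\cP$ (whose removal separates the leaves into the three parts of $\pi$); the handshake identity $\sum_v(\deg(v)-2)=-2$ on any finite tree then yields $|\cP|\leq|S|-2$ directly.
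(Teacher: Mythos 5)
The paper does not reprove this lemma; it cites \cite{Gri99a} directly, so there is no in-paper proof to compare against. Your inductive framework (fix $\pi_0$, split $\cP\setminus\{\pi_0\}$ into $\cP_1,\cP_2,\cP_3$, restrict to the $A_i$) is sound, but the strengthened induction hypothesis you introduce to close the argument is false. Take $T=\{a,b,c\}$ and the three singleton--complement bipartitions $\{a\}\{bc\}$, $\{b\}\{ac\}$, $\{c\}\{ab\}$. Under any reasonable extension of the compatibility relation to $2$-partitions --- in particular the direct analogue of $P'_2\cup P'_3\subseteq P_1$, namely $P'_2\subseteq P_1$ for some ordering --- these three are pairwise compatible, giving a compatible family of size $3>|T|-1=2$. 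More generally all $|T|$ singleton--complement bipartitions of any finite $T$ are pairwise compatible, so the bound $|T|-1$ fails for every $|T|\geq 3$. The claim you cannot prove by ``the same inductive scheme'' because it is not true.

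The gap is repairable without strengthening the statement, because the $2$-partitions arising in your construction are very constrained: if $\pi,\pi'\in\cP_i$ both restrict to $2$-partitions of $A_i$, then both have $S\setminus A_i$ as one of their parts, and a short check shows no two parts of $\pi'$ can sit inside a single part of $\pi$, so $\pi,\pi'$ would not be compatible. Hence each $\cP_i$ contains \emph{at most one} element with a $2$-partition restriction; the remaining restrictions form a compatible family of $3$-partitions of $A_i$ (this, the injectivity of $\pi\mapsto\pi|_{A_i}$, and the degenerate cases $|A_i|\leq 2$ all need to be verified, but are short), so the unstrengthened inductive hypothesis already gives $|\cP_i|\leq|A_i|-1$ and hence $|\cP|\leq 1+\sum_i(|A_i|-1)=|S|-2$. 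As for the tree realisation you float as an alternative: the handshake count is correct, but the existence of a tree realising an arbitrary compatible family is exactly the nontrivial content of the lemma, and you have only asserted it.
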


\begin{proof}[Proof of Lemma~\ref{lem:no:tri}]
    For $f \in F(\Hexa)$, $r > 0$, let $\cluster_r(f)$ be the connected component of $f$ in the restriction of $\cluster(f)$ to $B(r)$.
    We say that $f$ is an \emph{$r$-coarse trifurcation point} if $\cluster(f) \setminus \cluster_r(f)$ has at least three distinct infinite connected components.
    Let us denote this event by $\tricoarse(f,r)$. 
    By translation-invariance, $\Ploop(\tricoarse(f,r))$ does not depend on $f$.
    We prove that $\Ploop(\tri) = 0$ by contradiction, in three steps:
    \begin{itemize}
        \item \emph{Step 1}: if $\Ploop(\tri) > 0$, then there exists $r$ large enough such that $\Ploop(\tricoarse(f,r)) > 0$;
        \item \emph{Step 2}: if $f$ is an $r$-coarse trifurcation and $B(f,r) \subset B(R)$, $f$ defines a $3$-partition of $\cluster(f) \cap \partial B(R)$ two such $3$-partitions are compatible whenever~$|f-f'| \geq R$;
        \item \emph{Step 3}: we obtain a contradiction by Lemma~\ref{lem:grimmett}.
    \end{itemize}

    \emph{Step 1:}
    Assume for contradiction that $\Ploop(\tri) > 0$. 
    For $f \in F(\Hexa)$, define the event~$\tri(f)$:
    $$
    	\tri(f) = \{\exists \clusterbis_1, \clusterbis_2, \clusterbis_3 \in \infclusters(\randomloop) \setminus \{\cluster(f)\} \text{ distinct}  \colon \forall i \in \{1,2,3\},~\clusterbis_i \sim \cluster(f)\}.
    $$
    By translation-invariance of~$\Ploop$ and since~$\Ploop(\tri) > 0$, we get that~$\Ploop(\tri(f)) = p >0$ does not depend on~$f \in F(\Hexa)$. For $r > 0$, define: 
    $$
        \tri(f,r) := \{\exists \clusterbis_1, \clusterbis_2, \clusterbis_3 \in \infclusters(\randomloop) \setminus \{\cluster(f)\} \text{ distinct}\colon
     	\forall i \in \{1,2,3\},~\clusterbis_i \cap B(f,r) \neq \emptyset\}.
    $$
    For a given~$f \in F(\Hexa)$, the events $\tri(f,r)$ are increasing in $r$ and $\bigcup_{r \in \Z_{>0}} \tri(f,r) = \tri(f)$.
    Hence, by translation-invariance, there exists $r_0 \in \N$ large enough such that for all $f \in F(\Hexa)$, $\Ploop(\tri(f,r_0)) \geq p/2$.
    Since $\Ploop(\tri)>0$, $\Ploop(\Npath \geq 1)=1$ so by Equations~\eqref{eq:incr:loop} and~\eqref{eq:incr:paths} we can fix $r$ such that $\Ploop(\nocrazyloop(f,r_0,r) \cap \nocrazypath(f,r_0,r)) \geq 1-p/4$. By union bound, 
    \begin{equation*}
        \Ploop(\tri(f,r_0) \cap \nocrazyloop(f,r_0,r) \cap \nocrazypath(f,r_0,r)) \geq p/4.
    \end{equation*}
   	We finish Step~$1$ by showing the following inclusion of events:
    $$
        \tricoarse(f,r) \supset \tri(f,r_0) \cap \nocrazyloop(f,r_0,r) \cap \nocrazypath(f,r_0,r),
    $$
    Indeed, the $\clusterbis_i$ from the definition of $\tri(f,r_0)$ break $\cluster(f)\setminus B(f,r_0)$ into at least three distinct infinite connected components. 
    Moreover, on the event $\nocrazyloop(f,r_0,r) \cap \nocrazypath(f,r_0,r)$, it holds that $\cluster(f) \cap B(f,r_0) \subset \cluster_r(f)$, so $\cluster(f) \setminus \cluster_r(f) \subset \cluster(f) \setminus B(f,r_0)$ also has at least three infinite connected components.
    
    \bigskip

    \emph{Step 2:} 
    Let $R > 0$, $f \in F(\Hexa)$ be such that $B(f,r) \subset B(R)$ and $f$ is an $r$-coarse trifurcation.
    Let us denote by $(A_i(f))_{1 \leq i \leq n(f)}$ the connected components of $\cluster(f) \setminus \cluster_r(f)$ intersecting $B(R)$. 
    Partition $\{1,\dots, n(f)\}$ arbitrarily in three non-empty subsets, $\{1,\dots, n(f)\} = I_1(f) \sqcup I_2(f) \sqcup I_3(f)$ and define
    $$
        P_k(f) = \bigsqcup_{i \in I_k(f)} A_i(f) \cap \partial B(R) \quad ; \quad k \in \{1,2,3\}.
    $$
    By definition of $\tri(f,r)$, at least three of the $A_i(f)$ are infinite so we can choose the partition such that $P_k(f) \neq \emptyset$ for $k \in \{1,2,3\}$. 
    This defines a $3$-partition
    $$
        \cluster(f) \cap \partial B(R) = P_1(f) \sqcup P_2(f) \sqcup P_3(f).
    $$
    Let us now prove that if $B(f,r) \cap B(f',r) = \emptyset$, the $3$-partitions $(P_1(f),P_2(f),P_3(f))$ and $(P_1(f'),P_2(f'),P_3(f'))$ are compatible.
    Since $B(f,r) \cap B(f',r) = \emptyset$, we have~$\cluster_r(f) \cap \cluster_r(f') = \emptyset$.
    Let us consider the connected components in $\cluster(f) \setminus (\cluster_r(f) \cup \cluster_r(f'))$ intersecting $B(R)$, and split them into three groups:

    (a) the components adjacent only to $\cluster_r(f)$, let us denote them by $B_i$

    (b) the components adjacent only to $\cluster_r(f')$, let us denote them by $C_i$

    (c) the components adjacent to both $\cluster_r(f)$ and $\cluster_r(f')$, let us denote them by $D_i$.
    
    Since $\cluster_r(f')$ is connected, all $C_i$ and $D_i$ belong to the same connected component of $\cluster(f) \setminus \cluster_r(f)$, hence upon renumbering the $A_i(f)$,
    $$
        A_1(f) = \left(\bigcup C_i\right) \cup \left(\bigcup D_i\right) \cup \cluster_r(f') \quad ; \quad A_i(f) = B_{i-1},~2 \leq i \leq n(f).
    $$
    Similarly, upon renumbering the $A_i(f')$ 
    $$
        A_1(f') = \left(\bigcup B_i\right) \cup \left(\bigcup D_i\right) \cup \cluster_r(f) \quad ; \quad A_i(f') = C_{i-1},~ 2 \leq i \leq n(f').
    $$
    Hence, the $3$-partitions associated to $f$ and $f'$ are compatible: upon renumbering the $P_k(f)$ and $P_k(f')$, we can assume that $1 \in I_1(f)$, $1 \in I_1(f')$ and then
    \begin{align*}
        P_1(f) 
        &\supset [\partial B(R) \cap A_1(f)]
        \supset \partial [B(R) \cap \bigcup C_i]\\
        &= [\partial B(R) \cap \bigcup_{2 \leq i \leq n(f')} A_i(f')]
        \supset [P_2(f') \cup P_3(f')].
    \end{align*}

     \bigskip
    
    \emph{Step 3:} recall that $r$ was fixed in Step~$1$.
    For $(k, \ell) \in \Z^2$, let $f_{k,\ell} \in F(\Hexa)$ denote the face centered at $2r(k + \ell e^{i \pi/3})$. 
    Note that $B(f_{k,\ell},r) \cap B(f_{k',\ell'},r) = \emptyset$ whenever~$(k,\ell)\neq (k',\ell')$.
    Let $R>0$, $\randomloop \in \Omfp$ and $\cluster \in \infclusters(\randomloop)$ be any infinite cluster intersecting $B(R+r)$, if such exists.
    Given any set of faces~$F$, define~$t(F)$ as the set of $f_{k,l} \in F$ such that $\randomloop \in \tri(f_{k,l},r)$.
    By Steps~$1$ and~$2$, $t(\cluster \cap B(R))$ induces a compatible family of $3$-partitions of $\cluster \cap \partial B(R+r)$.
    Hence, by Lemma \ref{lem:grimmett}
       \[
        |t(\cluster \cap B(R))| \leq |\cluster \cap \partial B(R+r)|.
    \]
    Summing over all infinite clusters intersecting $B(R+r)$, we obtain
    \begin{equation}\label{eq:tri:upper:bound}
        |t(B(R)\cap F(\Hexa))| \leq |\partial B(R+r)\cap F(\Hexa)| \leq c(r)R,
    \end{equation}
	for some constant $c(r)$ that depends only on $r$. 
	On the other hand,
	\[
		|\{(k,\ell) \in \Z^2, f_{k,\ell} \in B(R)\cap F(\Hexa) \}| \geq R^2/(4r^2),
	\]
    and every $f_{k,\ell} \in F(\Hexa) \cap B(R)$ is a trifurcation point with probability at least~$p/2$.
    Thus,
    \[
        \Ploop[|t(B(R)\cap F(\Hexa))|] \geq C(r)p(r)R^2,
    \]
    This contradicts \eqref{eq:tri:upper:bound} for $R$ large enough.
\end{proof}

\section{Path swapping lemma}\label{sec:swap}

Before stating the path swapping lemma, we need some additional definitions.
\paragraph{Topological definitions.}
An oriented infinite path $\gamma$ splits $\R^2$ in two connected subsets, $\R^2_{\gamma,+}$ and $\R^2_{\gamma,-}$ respectively on its right and left. The faces \emph{bordering $\gamma$ on the right} (resp. \emph{left}) are the faces $f \in F(\Hexa) \cap \R_{\gamma,+}^2$ (resp. $f \in F(\Hexa) \cap \R_{\gamma,-}^2$) such that $\partial f \cap \gamma \neq \emptyset$. If $\gamma_1$ and $\gamma_2$ are two disjoint infinite paths, the \emph{area} between $\gamma_1$ and $\gamma_2$ is 
$$
\area(\gamma_1, \gamma_2) := \R^2_{\gamma_1,+} \cap \R^2_{\gamma_2,+}
$$ 
where each path is oriented such that the other one lies on its right. 
\begin{Def}
    Let $\randomloop \in \Omfp$. We say that $\gamma_1, \gamma_2 \in \paths(\randomloop)$ are \emph{parallel in $\randomloop$} if 
    \[
        \forall \gamma \in \paths(\randomloop) \setminus \{\gamma_1, \gamma_2\} \quad
        \area(\gamma_1, \gamma_2) \cap \gamma = \emptyset.
    \]
    Let $X \subset \R^2$. Two disjoint infinite paths $\gamma_1, \gamma_2$ are {\em glued in $X$} if
    \[
        \forall f\in X\cap \area(\gamma_1, \gamma_2) \quad \partial f \cap \gamma_1 \neq \emptyset \text{ and } \partial f \cap \gamma_2 \neq \emptyset.
    \]
\end{Def}
Clearly, if $\gamma_1, \gamma_2$ are glued in $\R^2$, then they are parallel; the opposite is not always true.

\begin{Def}\label{def:dual:path}
    For $\randomloop \in \Omfp$, $\gamma_1, \gamma_2 \in \paths(\randomloop)$, a \emph{dual path in $\randomloop$ between $\gamma_1$ and $\gamma_2$} is a dual path $f_1, \dots, f_{k+1} \in F(\Hexa)$ such that $(f_1f_2)^{\star} \in \gamma_1$, $(f_kf_{k+1})^{\star} \in \gamma_2$ and $(f_if_{i+1})^{\star} \notin \randomloop$ for all $2 \leq i \leq k-1$.
\end{Def}
Note that any two parallel paths~$\gamma_1$ and~$\gamma_2$ are connected by a finite dual path.

Recall that for $X \subset \R^2$ and $\gamma = (f_1, \dots, f_{k+1})$, we write $\gamma \sqsubset X$ if $f_2, \dots, f_k \in F(\Hexa) \cap X$.
We emphasize that $f_1, f_{k+1}$ might not be in~$F(\Hexa) \cap X$. 
In particular, by our definitions, if $(f_1,\dots,f_{k+1})$ is a dual path between $\gamma_1$ and $\gamma_2$, then 
\[
	(f_1, \dots, f_{k+1}) \sqsubset \area(\gamma_1,\gamma_2) \quad \text{but} \quad 
	f_1, f_{k+1} \not\in \area(\gamma_1,\gamma_2) \cap F(\Hexa).
\]

\paragraph{The swapping lemma.}

Recall the flip operation from Definition~\ref{def:flip}.
Informally, the path swapping lemma states the following: 
given a ``typical'' $\randomloop \in \Omfp$ and a pair of parallel paths $\gamma_1, \gamma_2 \in \paths_R(\randomloop)$ oriented arbitrarily,
it is possible to rewire the four infinite branches $\gamma_{1,R,\pm}$ and $\gamma_{2,R,\pm}$ by performing a finite number of flips in $F(\Hexa) \cap B(R)$.
By the latter we mean existence of $f_1, \dots, f_k \in  F(\Hexa) \cap B(R) $ such that~$\tomega := \flip_{f_k,\dots,f_1}(\randomloop)$
coincides with~$\randomloop$ outside of~$B(R)$ and
satisfies (see Fig.~\ref{fig:swap:path})
\begin{align}\label{eq:swap}
	\paths(\tomega)&= \paths(\randomloop)\setminus\{\gamma_1,\gamma_2\} \cup \{\tgamma_1,\tgamma_2\}, \\
	\text{ where } \tgamma_{i,R,-}&=\gamma_{i,R,-}, \tgamma_{i,R,+}=\gamma_{1-i,R,+} \text{ for } i=1,2. \nonumber
\end{align}
		
The key point is that these flips {\em change the topology of the infinite paths}:
if $|\paths(\randomloop)|\geq 3$, the paths $\tgamma_1$ and~$\tgamma_2$ are not parallel.
\begin{figure}
        \centering
        \begin{overpic}[abs,unit=1mm,scale=.5]{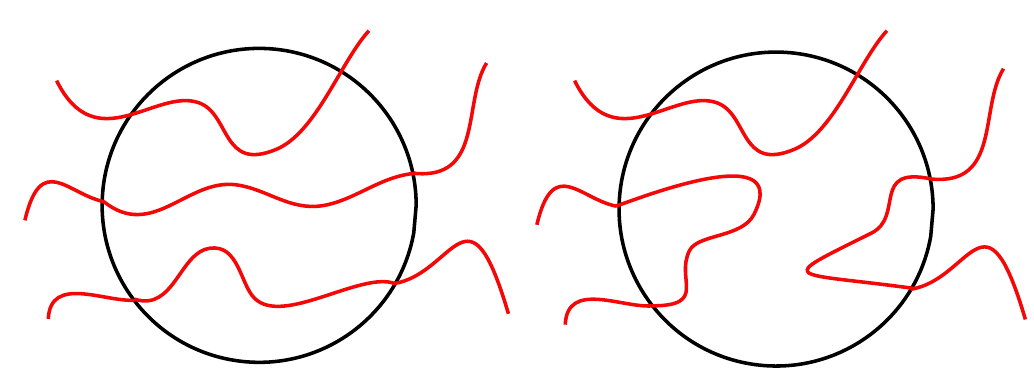}
        \put(0,8){\textcolor{red}{$\gamma_1$}}
        \put(-2,15){\textcolor{red}{$\gamma_2$}}
        \put(0,27){\textcolor{red}{$\gamma_3$}}
        \put(50,4){\textcolor{red}{$\tilde{\gamma}_1$}}
        \put(88,8){\textcolor{red}{$\tilde{\gamma}_2$}}
        \put(75,30){\textcolor{red}{$\tilde{\gamma}_3$}}
        \put(30,0){$B(r)$}
        \put(74,0){$B(r)$}
    \end{overpic}
        \caption{On the left: the paths before the flips in $B(R)$. On the right: the paths after the flips. Note that in this example, since $\gamma_1$, $\gamma_2$ are parallel and $\Npath \geq 3$, it is visually clear that $\tilde{\gamma}_1$ and $\tilde{\gamma_2}$ are not parallel.}
        \label{fig:swap:path}
    \end{figure}

For the above to hold, we first need to rule out certain pathological behavior.
Recall the definition of $\nocrazyloop(r,R)$ and $\nocrazypath(r,R)$ from Equations~\eqref{eq:def:nocrazyloop} and \eqref{eq:def:nocrazypath}.
On these events, respectively loops are not too long and paths are not too wiggly.

\begin{Lem}\label{lem:swap}
    Let $40 < 4r_0 < 2r_1 < R$. Let $\randomloop \in  \nocrazyloop(r_0,r_1) \cap \nocrazyloop(r_1,R)\cap \nocrazypath(r_1,R)$ and $\paths_{R}(\randomloop) = \{\gamma_1, \dots, \gamma_n\}$. Assume that~$\gamma_1, \gamma_2 \in \paths_{r_0}(\randomloop)$ and that these paths are parallel but not glued in $B(r_1)$.
    Then, for some integer $k \leq |F(\Hexa) \cap B(r_1)|+1$, there exists a sequence $g_1,\dots g_k \in B(R) \cap F(\Hexa)$ such that~$\tomega := \flip_{g_k, \dots, g_1}(\randomloop)$ satisfies Eq.~\eqref{eq:swap}.
    Moreover, if~$\Npath(\randomloop) \geq 3$, then~$\tgamma_1$ and~$\tgamma_2$ defined in~\eqref{eq:swap} are not parallel.
\end{Lem}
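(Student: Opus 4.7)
The plan is to construct the flip sequence in two phases: a shrinking phase using up to $|F(\Hexa)\cap B(r_1)|$ flips to bring $\gamma_1,\gamma_2$ into contact throughout $B(r_1)$, followed by one final swap flip that rewires the ``plus'' ends. \textbf{Setup.} Set $U := \area(\gamma_1,\gamma_2)\cap B(r_1)$. Since $\gamma_1,\gamma_2$ are parallel in $\randomloop$, no other path of $\paths(\randomloop)$ enters $\area(\gamma_1,\gamma_2)$, so the restriction of $\randomloop$ to $U$ consists of pieces of the two path boundaries together with finitely many finite loops trapped inside $U$. The hypotheses $\randomloop\in\nocrazyloop(r_0,r_1)\cap\nocrazyloop(r_1,R)\cap\nocrazypath(r_1,R)$ guarantee that every loop or path fragment of $\randomloop$ meeting $B(r_1)$ stays inside $B(R)$, so any flip I perform at a face of $U$ only modifies the configuration inside $B(R)$.

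\textbf{Shrinking.} Introduce the integer-valued potential
\[
\Phi(\randomloop') := \bigl|\{f\in\area(\gamma_1',\gamma_2')\cap B(r_1)\colon \partial f\cap\gamma_1' = \emptyset \text{ or } \partial f\cap\gamma_2' = \emptyset\}\bigr| + \#\{\text{finite loops of }\randomloop'\text{ in }U\},
\]
where $\gamma_1',\gamma_2'$ denote the current versions of $\gamma_1,\gamma_2$ after previously applied flips. The assumption that $\gamma_1,\gamma_2$ are not glued in $B(r_1)$ yields $\Phi(\randomloop)\geq 1$, and clearly $\Phi\leq|F(\Hexa)\cap B(r_1)|$. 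The key combinatorial claim is: as long as $\Phi > 0$, there exists a flippable hexagon in $\area(\gamma_1',\gamma_2')\cap B(r_1)$ (either adjacent to an innermost trapped loop or to a ``deep'' face away from both paths) whose flip strictly decreases $\Phi$. Finding such a hexagon is the fully-packed-loop analogue of Thurston's flip-connectivity for dimer domains: at a deepest face of $U$, or next to a deepest trapped loop, the degree-$2$ constraint at each vertex forces an alternating triple of loop edges along its boundary. After at most $|F(\Hexa)\cap B(r_1)|$ such flips we reach a configuration $\randomloop^\star$ in which $\gamma_1,\gamma_2$ are glued throughout $B(r_1)$ and share a common hexagon $g^\star\in B(r_1)$.

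\textbf{Swap and non-parallelism.} A direct case analysis in $\randomloop^\star$ shows that $g^\star$ carries exactly three alternating edges of $\randomloop^\star$ on its boundary, so $g^\star$ is flippable, and $\flip_{g^\star}$ rewires the outgoing half-paths at $g^\star$ by exchanging their ``plus'' halves while preserving their ``minus'' halves. Setting $g_k := g^\star$ we obtain $\tomega$ satisfying~\eqref{eq:swap}, with total flip count at most $|F(\Hexa)\cap B(r_1)|+1$, all inside $B(R)$. Finally, if $\Npath(\randomloop)\geq 3$ and $\gamma_3\in\paths(\randomloop)\setminus\{\gamma_1,\gamma_2\}$, parallelism of $\gamma_1,\gamma_2$ gives $\gamma_3\cap\area(\gamma_1,\gamma_2)=\emptyset$; the swap exchanges the ``plus'' halves, so the new area $\area(\tgamma_1,\tgamma_2)$ contains the former exterior of $\area(\gamma_1,\gamma_2)$ and hence all of $\gamma_3$, showing that $\tgamma_1,\tgamma_2$ are not parallel.

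The main obstacle is the shrinking step: rigorously exhibiting the required flippable hexagon at each stage and verifying that its flip decreases $\Phi$. This is a delicate local argument combining the parallel structure of $\gamma_1,\gamma_2$, the finite loops of $U$, and the degree-$2$ constraint on fully-packed loop configurations, and it is the fully-packed-loop counterpart of the classical flip-connectivity argument in the dimer literature.
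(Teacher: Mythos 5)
Your proposal sketches the right kind of strategy (a monotone potential plus a final swap flip, and the same area argument for non-parallelism), but it stops exactly at the step that requires real work, and several of its intermediate assertions are doubtful.

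The central gap: you never exhibit the flippable hexagon at each stage, nor verify that its flip strictly decreases $\Phi$. You defer this as "a delicate local argument" that is "the fully-packed-loop counterpart of the classical flip-connectivity argument", but this is precisely the content of the lemma. The paper handles it differently and concretely: it inducts on the length $k$ of a \emph{minimal dual path} $(f_1,\dots,f_{k+1})\sqsubset B(r_1)$ between $\gamma_1$ and $\gamma_2$. For $k\geq 3$ it proves, by a short case analysis using minimality, that $f_2$ must carry exactly three alternating occupied edges (hence is flippable), that the two occupied edges adjacent to $(f_2f_3)^\star$ must belong to two \emph{distinct} finite loops $\ell_1,\ell_2$ (distinctness is again by minimality, via a topological obstruction), and that flipping at $f_2$ absorbs $\ell_1,\ell_2$ into $\gamma_1$, producing a strictly shorter dual path while preserving membership in $\nocrazyloop$, $\nocrazypath$, $\paths_{r_0}$, parallelism, and non-gluedness. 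The base case $k=2$ is a single flip, located not at an arbitrary shared face but at a specific transition face $g$ found by walking along $\gamma_1$ until $\gamma_2$ stops bordering it; notably, the base case \emph{uses} the non-gluedness hypothesis to find such a transition.

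Two more specific concerns about your sketch. First, it is unclear that your potential $\Phi$ strictly decreases under the flips you hope to find: a single flip that merges a trapped loop into $\gamma_1'$ shrinks the area and removes a loop, but it can simultaneously change which remaining faces border the paths, so the first term of $\Phi$ is not evidently monotone; you would have to prove monotonicity, and the paper sidesteps this entirely by tracking the minimal dual-path length instead. Second, your target state before the final flip — ``$\gamma_1,\gamma_2$ glued throughout $B(r_1)$ and sharing a hexagon $g^\star$'' — is not the right one: if the paths were genuinely glued on all of $B(r_1)$, every face of the area would border both paths, and then the specific alternating configuration needed for the swap at $g^\star$ cannot be read off the gluedness alone; the paper's flip point lives at the boundary between the glued and unglued portion, and it is the presence of a face bordering only one path that pins it down. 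So even if the shrinking phase were completed, the final step of your argument would need to be reworked.
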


\begin{proof}
	We start by proving the last implication, i.e. that~$\tgamma_1$ and~$\tgamma_2$ are not parallel if~$\Npath(\randomloop) \geq 3$.
	Indeed, since $\gamma_1$ and $\gamma_2$ are parallel in $\randomloop$, we have $\gamma_3 \cap \area(\gamma_1, \gamma_2) = \emptyset$. 
	The branches of $\gamma_1$ and $\gamma_2$ in $\R^2 \setminus B(R)$ are exchanged in $\tomega$ and~$\tomega$ coincides with~$\randomloop$ on~$B(R)^c$ (see Fig.~\ref{fig:swap:path}), whence
    \begin{equation}\label{eq:area:swap}
        \area(\tgamma_1, \tgamma_2) \cap B(R)^c = B(R)^c \setminus \area(\gamma_1,\gamma_2),
    \end{equation}
    and, thus, $\gamma_3 \cap \area(\tgamma_1, \tgamma_2) \neq \emptyset$.
    
    We now focus on the main statement of the lemma.
    By the properties of~$\randomloop$, $\gamma_1$ and~$\gamma_2$, we can always find a dual path $(f_1, f_2, \dots, f_{k+1}) \sqsubset B(r_1)$ in $\randomloop$ from $\gamma_1$ to $\gamma_2$.
    Proceed by induction on the minimal such $k \geq 2$:
    \begin{itemize}
    	\item {\em Base:} Assume that there exists $f \in F(\Hexa) \cap B(r_1)$ such that $\gamma_1 \cap \partial f \neq \emptyset$ and $\gamma_2 \cap \partial f \neq \emptyset$. Then, there exists $g \in F(\Hexa) \cap B(R)$ such that $\tomega = \flip_g(\randomloop)$ is the required path swapping.
    	\item {\em Induction step:} Assume $k\geq 3$ is the length of a minimal dual path $(f_1, \dots, f_{k+1}) \sqsubset B(r_1)$ from $\gamma_1$ to $\gamma_2$.
    Then, there exists $f \in F(\Hexa) \cap B(r_1)$ such that that following holds: $\randomloop' := \flip_f(\randomloop)$ satisfies~$\nocrazyloop(r_0,r_1)$, $\nocrazyloop(r_1,R)$, and~$\nocrazypath(r_1,R)$; the bi-infinite paths~$\gamma_1'$ and~$\gamma_2'$ (images of~$\gamma_1$ and~$\gamma_2$) intersect~$B(r_0)$ and are parallel in~$B(r_0)$ but not glued; there exists a dual path in~$B(r_1)$ between~$\gamma_1'$ and~$\gamma_2'$ of length strictly less than~$k$.
    \end{itemize}
    It is immediate that the combination of these two statements will imply the lemma. 
   
    \bigskip
    
	\emph{Proof of the base case.}
	Orient $\gamma_1$, $\gamma_2$ in such a way that each path is to the right of the other.
	Since these paths are not glued in~$B(r_1)$, there exists a face~$f'$ in~$B(r_1)$ that borders one of the paths but not the other.
	Without loss of generality, we can assume that~$f'$ borders~$\gamma_1$.
	Let~$e\in \partial f\cap \gamma_1$, $e'\in \partial f'\cap \gamma_1$.
	Since~$\randomloop\in\nocrazyloop(r_1,R)$, once $\gamma_1$ exits $B(R/2)$ for the first time, it never returns in $B(r_1)$.
	Thus, the segment of $\gamma_1$ from $e$ to $e'$ remains in $B(R/2)$.
	Follow this segment and look at the faces bordering it on the right: denote by~$g$ the last face that borders~$\gamma_2$; denote by~$g'$ the face just after it (in the order along the segment).
	Since~$\gamma_1$ and~$\gamma_2$ are parallel and $g \in \area(\gamma_1, \gamma_2)$, no other bi-infinite path in~$\randomloop$ borders~$g$.
	It remains to check that~$g$ is flippable and~$\tomega:=\flip_g(\randomloop)$ satisfies the required conditions.
	We do this by considering all possible cases.
	
	Label the sides of $g$ (resp. $g'$) by $e_1, \dots, e_6$ (resp. $e'_1,\dots ,e'_6$) in counterclockwise order, with common edge $e_4 = e'_1$, and with $e_5 \in \gamma_1$, $e'_6 \in \gamma_1$.
    Since $\gamma_2 \cap \partial g \neq \emptyset$ and $\gamma_2 \cap \partial g' = \emptyset$, one of the $e_i$ is in $\gamma_2$ and none of the $e'_i$ is in $\gamma_2$. We must have $e_6 \notin \gamma_2$ (because $\gamma_1 \neq \gamma_2$), $e_3 \notin \gamma_2$ (because otherwise $e'_1 \in \gamma_2$ or $e'_2 \in \gamma_2$, which are both excluded), and $e_2 \notin \gamma_2$ since otherwise the right extremity $u$ of $e_4 = e'_1$ would have degree $0$ in $\randomloop$. The only remaining possibility is $e_1 \in \gamma_2$, but~$e_6, e_2, e_3, e_4 \notin \gamma_2$. Since $\randomloop$ is fully packed, $e_3$ must belong to some $\ell \in \loops(\randomloop)$ or some infinite path which can only be $\gamma_1$ or~$\gamma_2$ because $g \in \area(\gamma_1, \gamma_2)$ and $\gamma_1$ and $\gamma_2$ are parallel.
    Thus, performing a flip at $g$ indeed rewires the infinite branches of $\gamma_1$ and $\gamma_2$ and does not affect the other paths.
    
    \bigskip
    
    \emph{Proof of the induction step.} 
    Denote by $(f_1, \dots, f_{k+1}) \sqsubset B(r_1)$ a dual path from $\gamma_1$ to $\gamma_2$ in $B(r_1)$ of minimal length.
    Denote by $u_0, u_1, \dots, u_5$ the vertices on $\partial f_2$, in clockwise order with $u_0u_1 = (f_1 f_2)^{\star} \in E(\Hexa)$.
    Consider the edges~$e_i := u_iu_{i+1}$ (with the convention that $u_6 := u_0$).\\
    
    {\bf Claim.}
    The face~$f_2$ is flippable and $e_2 \in \ell_1$, $e_4 \in \ell_2$ for distinct $\ell_1, \ell_2 \in \loops(\randomloop)$.
    
    \begin{proof}[Proof of claim]
		By minimality of $k$, $(f_2f_3)^{\star} = e_i$ for some $i \in \{2,3,4\}$. Indeed, if $(f_2f_3)^{\star} \in \{e_1,e_5\}$, then $(f_1, f_3, f_4, \dots, f_{k+1}) \sqsubset B(r_1)$ would be a dual path in $\randomloop$ between $\gamma_1$ and $\gamma_2$ of strictly smaller length. Since $k \geq 3$, by definition of a dual path, we have $e_i \notin \randomloop$ and, thus, $e_{i-1}, e_{i+1} \in \randomloop$ (indeed, $\randomloop \Omfp$).
		Also, $e_{i-1}, e_{i+1} \notin \gamma_1$ by minimality of $k$.
		Indeed, otherwise there exists a face~$f'\not\in \area(\gamma_1,\gamma_2)$ adjacent to~$f_2$ and~$f_3$, and we get a dual path  $(f', f_3, f_4, \dots, f_{k+1}) \sqsubset B(r_1)$ of length~$k-1$. We insist that $f'$ needs not be in $B(r_1)$, due to our definition of $\sqsubset$. 

	    Together, the facts $e_{i-1}, e_{i+1} \in \randomloop$, $e_{i-1}, e_{i+1} \notin \gamma_1$ and $e_0 \in \gamma_1$ imply that $i=3$, i.e. $(f_2f_3)^{\star} = e_3 \notin \randomloop$ while $e_2, e_4 \in \randomloop \setminus \gamma_1$.
	    The latter also implies $e_1, e_5 \notin \randomloop$. In particular, $f_2$ is flippable. Furthermore, $e_2, e_4 \notin \gamma_2$ because we assumed that $k \geq 3$. Hence, $e_2 \in \ell_1$ and $e_4 \in \ell_2$ for some $\ell_1, \ell_2 \in \loops(\randomloop)$. 
	    The loops $\ell_1$ and $\ell_2$ are necessarily distinct. Indeed, assume for contradiction that they are the same loop $\ell = \ell_1 = \ell_2$. The loop $\ell$ contains $u_3$ and $u_4$, but not the edge $e_3 = u_3u_4$ because $e_3 \notin \randomloop$. Since the dual path $(f_2,\dots, f_{k+1})$ cannot use the dual edge $f_2f_3$ twice (by minimality), and since it intersects $\gamma_2$, it must also intersect an edge of $\ell$ (for topological reasons) which contradicts the definition of a dual path in $\randomloop$.
	\end{proof}
    
    \begin{figure}\centering
    \begin{overpic}[abs,unit=1mm,scale=.7]{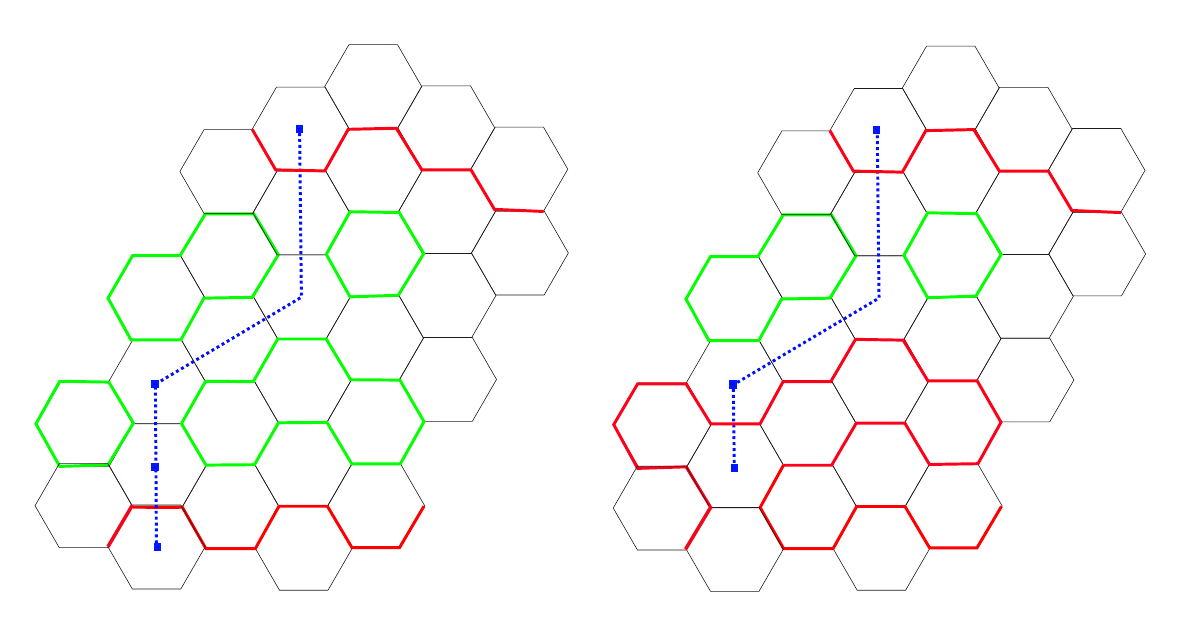}
    \put(50,10){\textcolor{red}{$\gamma_1$}}
    \put(66,48){\textcolor{red}{$\gamma_2$}}
    \put(118,10){\textcolor{red}{$\tilde{\gamma}_1$}}
    \put(134,48){\textcolor{red}{$\tilde{\gamma}_2$}}
    \put(18,5){\textcolor{blue}{$f_1$}}
    \put(20,18){\textcolor{blue}{$f_2$}}
    \put(14,28){\textcolor{blue}{$f_3$}}
    \end{overpic}
    \caption{An illustration of the induction step. The two infinite paths $\gamma_1$ and $\gamma_2$ are closer after perfoming a flip at $f_2$.}
    \label{fig:swap}
    \end{figure}
    
    We come back to the proof of the induction step. Let $\omega' := \flip_{f_2}(\randomloop)$ (see Fig.~\ref{fig:swap}). Since the only infinite path in $\randomloop$ modified by the flip is $\gamma_1$, $\paths_{r_2}(\omega') = \{\gamma'_1, \gamma_2, \dots, \gamma_n\}$ where $\gamma'_1$ is obtained from $\gamma_1$ by gluing the loops $\ell_1$ and $\ell_2$ to $\gamma_i$ along $f_2$. More precisely, $\gamma'_1$ follows $\gamma_1$ until $u_0$, the edge~$u_0u_5$, $\ell_2$ counterclockwise from~$u_5$ to $u_4$, the edge $u_4u_3$, $\ell_1$ counterclockwise from~$u_3$ to $u_2$, the edge~$u_2u_1$ and then the rest of $\gamma_1$. 
    In particular, $e_3 = (f_2f_3)^{\star} \in \gamma'_1$, so $(f_2,\dots, f_{k+1}) \sqsubset B(r_1)$ is a dual path in $\omega'$ from $\gamma'_1$ to $\gamma_2$. 
    It remains to prove that $\omega'$ satisfies all the required properties:
    \begin{enumerate}[label=(\roman*)]
        \item $\loops(\omega') = \loops(\randomloop) \setminus \{\ell_1,\ell_2\}$, whence $\omega' \in \nocrazyloop(r_0,r_1) \cap \nocrazyloop(r_1,R)$;
        \item Since $\randomloop\in\nocrazyloop(r_1,R)$, neither $\ell_1$ nor $\ell_2$ intersects $\partial B(R/2)$.
        Also, by~$\randomloop\in\nocrazypath(r_1,R)$, the bi-infinite path~$\gamma_1$ has exactly two branches in $B(R/2) \setminus B(r_1)$.
        Then, the same holds for~$\gamma'_1$, which is a combination of~$\gamma_1$, $\ell_1$ and~$\ell_2$.
        This implies that~$\randomloop'\in \nocrazypath(r_1,R)$, as all other bi-infinite paths in~$\randomloop'$ coincide with those in~$\randomloop$.
        \item Since~$\gamma_1, \gamma_2$ intersect~$B(r_0)$, the same holds for~$\gamma'_1$ and~$\gamma'_2=\gamma_2$.
        Indeed, $\gamma_1\setminus\gamma'_1 = \{e_0\}$.
        If~$e_0$ is in~$B(r_0)$, then so is one of the four edges that share an endpoint with~$e_0$, which all belong to~$\gamma'_1$.
        \item $\gamma'_1$ and $\gamma'_2 = \gamma_2$ are parallel in $\randomloop'$, because $\area(\gamma'_1, \gamma_2) \subset \area(\gamma_1, \gamma_2)$ and all other bi-infinite path of~$\randomloop$ are left unchanged by the flip at $f_2$.
        \item Choose any $f \in \big(F(\Hexa) \cap B(r_1) \cap \area(\gamma_1,\gamma_2)\big) \setminus \{f_2\}$ bordering $\gamma_1$. The face $f$ also borders $\gamma'_1$. Moreover, since $\randomloop$ does not satisfy the hypothesis of the base case, $\partial f \cap \gamma_2 = \emptyset$. This proves that $\gamma'_1$ and $\gamma_2$ are not glued in $B(r_1)$.
    \end{enumerate}
\end{proof}

In order to use Lemma \ref{lem:swap}, we need to show that bi-infinite paths are not glued together.
Note that if $\Pdimer$ was a frozen measure, the paths could in fact be glued. 
We believe that under non-frozen measures $\Ploop$, there exists almost surely no glued paths. Instead of proving this, we prove that when $\Ploop$ is non-frozen, even if some paths are glued together, we can “unglue” them to apply Lemma \ref{lem:swap}.
To prove this, we use an input from the dimer theory.
Define an event
\[
    \flippable = \{\randomloop \in \Omfp,~\fzero  \text{ is flippable in }\randomloop\}.
\]
By Lemma~\ref{lem:hexagon}, since~$\Ploop$ corresponds to a non-frozen measure~$\Pdimer$,
\[
    \Ploop(\flippable) > 0.
\]
We define one more event: for $r \in \R_+ \cup \{\infty\}$, let
\begin{align}\label{eq:def:glue:r}
	\glue_r := \{\randomloop \in \Omfp \colon \exists \gamma_1, \gamma_2 \in \paths_r(\randomloop)~ &\text{that are not glued in } B(r) \\
	&\text{and bound a common }\cluster \in \infclusters(\randomloop)\}. \nonumber
\end{align}
This means that there exist two infinite paths intersecting $B(r)$, not glued in $B(r)$ and with a dual path in $\randomloop$ between them (Definition \ref{def:dual:path}).
We use the convention that $B(\infty) = \R^2$. 

\begin{Lem}\label{lem:unglue}
    Let $R> r> 0$.
    Assume that $\Npath \geq 2$ and take any~$\randomloop \in \nocrazyloop(r,R) \cap \flippable$ that satisfies~$\paths_r(\randomloop) \neq \emptyset$.
    Then, either $\randomloop \in \glue_{\infty}$ or $\flip_f(\randomloop) \in \glue_{\infty}$ for some $f \in F(\Hexa) \cap B(R)$.
\end{Lem}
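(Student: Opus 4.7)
My plan is to assume $\randomloop \notin \glue_\infty$ and then produce a face $f \in F(\Hexa) \cap B(R)$ such that $\flip_f(\randomloop) \in \glue_\infty$, exploiting that $\fzero$ is flippable.

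First, I would locate a candidate pair of bi-infinite paths bounding a common infinite cluster. By planarity, $\Npath$ disjoint bi-infinite paths partition $\R^2$ into $\Npath + 1$ connected regions (the $\Npath+1$ infinite clusters), and the resulting cluster-adjacency graph (two clusters being adjacent iff a bi-infinite path separates them) is a tree with $\Npath + 1$ vertices and $\Npath$ edges. Since $\Npath \geq 2$ gives at least three vertices, the tree contains a vertex of degree at least two, which yields $\gamma_1, \gamma_2 \in \paths(\randomloop)$ bounding a common infinite cluster $\cluster$. A standard planarity argument (any bi-infinite path inside $\area(\gamma_1,\gamma_2)$ would split $\cluster$ off from one of the two bounding paths) shows that $\gamma_1, \gamma_2$ are parallel and $\cluster \subset \area(\gamma_1, \gamma_2)$.

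If $\gamma_1, \gamma_2$ are not glued somewhere in $\R^2$, then $\randomloop \in \glue_\infty$ directly. Otherwise they are glued everywhere, which forces $\cluster$ to be a width-one strip: every face of $\cluster$ borders both paths. The key step is then to use the flippability at $\fzero$ to break the gluing. When $\fzero$ borders both $\gamma_1$ and $\gamma_2$ (so that $\fzero$ sits in $\cluster$), the three arcs of $\randomloop$ through $\fzero$ are distributed in one of a few possible ways between $\gamma_1, \gamma_2$, other bi-infinite paths, and loops. A direct case analysis on the rewiring induced by $\flip_\fzero$, in the same spirit as the surgery argument in Lemma~\ref{lem:swap}, shows that in each case the resulting configuration contains two bi-infinite paths $\tilde\gamma_1, \tilde\gamma_2$ still bounding a common infinite cluster, but with $\fzero$ (in its new dimer orientation) bordering only one of them; this gives $\flip_\fzero(\randomloop) \in \glue_\infty$.

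The main obstacle is the case when $\fzero$ does not border both $\gamma_1$ and $\gamma_2$: the flip at $\fzero$ then leaves the strip $\cluster$ untouched and so does not produce an element of $\glue_\infty$ via this pair. In this case I plan to use the hypothesis $\paths_r(\randomloop) \neq \emptyset$, which forces at least one bi-infinite path to meet $B(r) \subset B(R)$; applying the tree argument to an adjacent cluster in the cluster-tree yields a thin glued strip intersecting $B(R)$, along which I aim to find a flippable hexagon $f \in F(\Hexa) \cap B(R)$. Establishing that a width-one dimer strip meeting $B(R)$ must contain a flippable hexagon inside $B(R)$ is the most delicate combinatorial step; I expect it to follow from the rigid alternating dimer pattern forced by the ``glued everywhere'' condition together with the non-frozen assumption on $\Pdimer$, but pinning down the precise argument is what I anticipate being hardest.
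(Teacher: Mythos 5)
Your overall structure (dichotomize on whether $\randomloop\in\glue_\infty$, locate a pair of parallel paths bounding a common infinite cluster via the cluster-tree, and use the flip at $\fzero$) matches the paper's two-case split, and your Case~1 observation is essentially right: when $\fzero$ lies in $\area(\gamma_1,\gamma_2)$, the flippability of $\fzero$ is incompatible with $\gamma_1,\gamma_2$ being glued. (In fact the paper shows this directly by a topological obstruction, so \emph{no flip is needed} in that case — $\randomloop\in\glue_\infty$ already. Your plan of ``flipping to break the gluing'' can be replaced by this cleaner argument, but that is a minor point.)

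The genuine gap is in the case you yourself flag as hardest — when $\fzero\notin\area(\gamma_1,\gamma_2)$. Your plan is to locate a width-one glued strip meeting $B(R)$ and find a flippable hexagon \emph{inside} that strip, hoping that ``the rigid alternating dimer pattern'' plus ``the non-frozen assumption on $\Pdimer$'' will produce one. This does not work. A width-one glued strip is precisely a locally rigid pattern: it can contain no flippable hexagon at all, over an arbitrarily long stretch. Moreover, the non-frozen assumption on $\Pdimer$ is a property of the \emph{measure} (it guarantees flippable hexagons occur with positive probability somewhere), whereas the lemma is a deterministic statement about a single configuration $\randomloop$; you cannot conjure a flippable hexagon inside the strip from it, and the hypothesis $\randomloop\in\flippable$ only gives you the flippable hexagon $\fzero$, which in this case lies \emph{outside} the strip.

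The paper resolves this by working from the \emph{other} side of the nearest path, not from inside the strip. Let $\gamma_1$ be the bi-infinite path closest to $\fzero$ (so $\fzero\in\R^2_{-,\gamma_1}$ and all other paths lie in $\R^2_{+,\gamma_1}$), and let $\gamma_2$ be any path bounding the same infinite cluster $\cluster$ as $\gamma_1$. The key move is to locate a flippable hexagon $g\in F(\Hexa)\cap B(R)$ bordering $\gamma_1$ \emph{on the left} — that is, on the $\fzero$-side, not inside the glued strip. Flipping at $g$ perturbs $\gamma_1$ into $\tilde\gamma_1$; the face $g$ now lies on the right of $\tilde\gamma_1$, i.e.\ $g\in\area(\tilde\gamma_1,\gamma_2)$, but $g$ still satisfies $\partial g\cap\gamma_2=\emptyset$, so $g$ witnesses that $\tilde\gamma_1$ and $\gamma_2$ are not glued, giving $\flip_g(\randomloop)\in\glue_\infty$. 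To produce $g$ the paper uses only the data you already have: $e_0,e_2,e_4$ around $\fzero$ lie in loops (there are no paths on the left of $\gamma_1$), so there is a loop $\ell_0\in\loops_r(\randomloop)$; a dual path in $B(r)$ from $\ell_0$ to $\gamma_1$ yields a loop $\ell$ and a face $f$ bordering $\gamma_1$ on the left with $\partial f\cap\ell\neq\emptyset$; and since $\randomloop\in\nocrazyloop(r,R)$ forces $\ell$ to stay in $B(R)$, walking along $\gamma_1$ from $f$ to the last face still touching $\ell$ produces a transition face $g$ which a short case analysis (identical to the base case of Lemma~\ref{lem:swap}) shows to be flippable. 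Your proposal is missing this ``push the path from outside'' idea, which is the crux of the proof.
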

\begin{proof}
    Denote by $e_0, \dots, e_5$ the sides of $\fzero $ in counterclockwise order. 
    Since the origin is flippable, we can assume (upon renumbering) that $e_0, e_2, e_4 \in \randomloop$ and~$e_1, e_3, e_5 \notin \randomloop$.
    
    \bigskip
    
    \emph{Case 1: There exist $\gamma_1, \gamma_2 \in \paths(\randomloop)$ bounding a common cluster $\cluster \in \infclusters(\randomloop)$ such that $\fzero  \in \area(\gamma_1, \gamma_2)$. In this case, we prove that $\randomloop \in \glue_{\infty}$.}
    
    Orient $\gamma_1$ and $\gamma_2$ such that each path is on the right of the other. 
    Assume for contradiction that $\gamma_1$ and $\gamma_2$ are glued. 
    We can assume that $e_0 \in \gamma_1$, $e_2 \in \gamma_2$.
    Consider the faces~$f,g \in F(\Hexa)$ such that~$(\fzero f)^{\star} = e_5$ and $(\fzero g)^{\star} = e_3$. 
    If $e_4 \in \gamma_1$, then~$f$ borders $\gamma_1$ on the right and has no edge in $\gamma_2$ (see the left of Fig.~\ref{fig:obstruction}), which is a contradiction. 
    The case $e_4 \in \gamma_2$ is analogous.
    
    Hence, $e_4 \in \ell \in \loops(\randomloop)$.
    This implies that~$f,g \in \area(\gamma_1, \gamma_2)$. 
    Since we assumed that~$\gamma_1$ and~$\gamma_2$ are glued, $\partial f \cap \gamma_2 \neq \emptyset$ and~$\partial g \cap \gamma_1 \neq \emptyset$. This is impossible for topological reasons (see the right of Fig.~\ref{fig:obstruction}). Thus, the paths are not glued in $\R^2$.
    \begin{figure}
    \centering
	\includegraphics[width=0.25\textwidth, page=1]{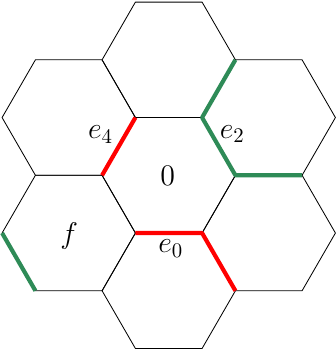}
	\hspace{0.1\textwidth}
	\includegraphics[width=0.25\textwidth, page=2]{topological_obstruction.pdf}
    \caption{Illustration of case 1 in the proof of Lemma \ref{lem:unglue}: $\gamma_1$ is drawn in red, $\gamma_2$ in green $\ell$ in blue. Both situations lead to a topological obstruction: it is impossible to complete these drawings by joining together the segments of each color.}
    \label{fig:obstruction}
    \end{figure}
    
    \bigskip
    
    \emph{Case 2: For any~$\gamma_1, \gamma_2 \in \paths(\randomloop)$ bounding a common cluster $\cluster \in \infclusters(\randomloop)$, $\fzero  \notin \area(\gamma_1, \gamma_2)$. In this case, we prove that there exists $f \in F(\Hexa) \cap B(R)$ such that $\flip_f(\randomloop) \in \glue_{\infty}$.}
    
    There exists $\gamma_1 \in \paths(\randomloop)$ which can be oriented in such a way that $\fzero  \in F(\Hexa) \cap \R^2_{-, \gamma_1}$ and, for any~$\gamma \in \paths(\randomloop) \setminus \{\gamma_1\}$, we have~$\gamma \subset \R^2_{+, \gamma_1}$.
    Our assumption~$\paths_r(\randomloop) \neq \emptyset$ implies that~$\gamma_1 \cap B(r) \neq \emptyset$.
    Denote by $\cluster \in \infclusters(\randomloop)$ the unique infinite cluster bordering $\gamma_1$ on the right.
    Consider any path~$\gamma_2 \in \paths(\randomloop)\setminus\{\gamma_1\}$, such that $\gamma_1$ and $\gamma_2$ bound a common cluster (such path indeed exists since~$\Npath \geq 2$).
    
    Assume that we can find a flippable hexagon $g \in F(\Hexa) \cap B(R)$ bordering $\gamma_1$ \emph{on the left}.
    Then, $\tomega = \flip_g(\randomloop)$ has the same infinite paths as $\randomloop$ except for $\gamma_1$ which is replaced by $\tgamma_1$. The paths $\tgamma_1, \gamma_2 \in \paths(\tomega)$ still bound a common cluster $\tcluster \in \infclusters(\tomega)$. Moreover $g$ borders $\tgamma_1$ \emph{on the right} so $g \in F(\Hexa) \cap \area(\tgamma_1, \gamma_2) \cap B(R)$ and $\partial g \cap \gamma_2 = \emptyset$. This proves that $\tgamma_1$ and $\gamma_2$ are not glued, whence $\tomega \in \glue_\infty$.
    
    It remains to prove that such face~$g$ indeed exists.
    If $\partial \fzero  \cap \gamma_1 \neq \emptyset$, then~$\fzero $ borders $\gamma_1$ on the left and is flippable, so we are done.
    Otherwise, $e_0, e_2, e_4$ belong to some loops in $\loops(\randomloop)$ since $\fzero \in \R_{-,\gamma_1}^2$ and no infinite path intersects $\R_{-,\gamma_1}^2$. In particular, there is at least one loop $\ell_0 \in \loops_r(\randomloop)$ on the left of $\gamma_1$. 
    
    We claim that we can find $\ell \in \loops_r(\randomloop)$ and $f \in F(\Hexa) \cap B(r)$ bordering $\gamma_1$ on the left such that $\partial f \cap \ell \neq \emptyset$. 
    Let us prove this. 
    Since $\ell_0 \cap B(r) \neq \emptyset$ and $\gamma_1 \cap B(r) \neq \emptyset$, we can find a face path $f_1, \dots, f_{n+1}$ in $F(\Hexa) \cap B(r)$ from $\ell_0$ to $\gamma_1$. If $\partial f_1 \cap \gamma_1 \neq \emptyset$, the claim is proven since by definition $\partial f_1 \cap \ell_0 \neq \emptyset$. Otherwise, let us denote by $1 \leq k \leq n$ the maximal index such that $\partial f_k \cap \gamma_1 = \emptyset$. Let $e_0 = (f_k f_{k+1})^*$ and $e_1 \in \partial f_{k+1}$ sharing a vertex with $e_0$. 
    Note that $\partial f_k \cap \gamma_1 = \emptyset$ implies $e_0, e_1 \notin \gamma_1$. 
    Since $\randomloop$ is fully packed, at least one of the two edges $e_0,e_1$ must be in $\randomloop$. 
    There are no infinite paths on the left of $\gamma_1$, so $e_0 \in \ell$ or $e_1 \in \ell$ for some $\ell \in \loops_r(\randomloop)$. In both cases, this proves the claim. 
    
    Due to the assumption~$\randomloop \in \nocrazyloop(r,R)$, we have $\ell \cap \partial B(R) = \emptyset$. Hence, we can find two hexagons $g, \tg \in F(\Hexa) \cap B(R)$ consecutive bordering~$\gamma_1$ on the left such that $\partial g \cap \ell \neq \emptyset$, $\partial \tg \cap \ell = \emptyset$. In more details: start from $f$ and follow $\gamma_1$; define~$g$ as the last hexagon bordering $\gamma_1$ on the left having a side in $\ell$; define~$\tg$ as the next hexagon after~$g$.
    Considering all cases proves that $g$ is flippable, which concludes the proof. We do not detail the argument once again, since it is exactly the same as in the proof of the base case of Lemma \ref{lem:swap}.
\end{proof}

\section{End of the proof: deducing Lemmata \ref{lem:k:diff:2} and \ref{lem:k:inf:3} from path swapping}\label{sec:main:proof}

\begin{proof}[Proof of Lemma \ref{lem:k:inf:3}]
	Assume that~$\Np_\infty\geq 3$, but the trifurcation event~$\tri$ does not occur.
	Then, each infinite dual cluster is bordered by one or two parallel bi-infinite paths.
	Our goal is to use the swapping argument from Lemma~\ref{lem:swap} to break this parallel structure: to find a sequence of flips such that~$\tomega := \flip_{f_k,\dots, f_1}(\randomloop)$ contains two bi-infinite paths~$\tgamma_1$ and~$\tgamma_2$ that are adjacent but not parallel. 
	In this case the infinite cluster bounded by~$\tgamma_1$ and~$\tgamma_2$ is bounded by at least one other path, so $\tomega \in \tri$. 
	Since flipping preserves the measure by Lemma \ref{lem:flip}, this will give a lower bound for~$\Ploop(\tri)$.
	
	We now make this formal.
	Assume for contradiction that $\Ploop(\tri)=0$ and take any~$\randomloop \notin \tri$. 

	\emph{Step~$1$: ungluing.} 
	To apply the swapping argument, we must check the condition that the paths are not glued.
	Recall that $\flippable$ denotes the event on which the origin $\fzero $ is flippable. By Lemma \ref{lem:hexagon}, 
	\[
		\eps := \Ploop(\flippable) > 0.
	\]
    Recall from \eqref{eq:def:loops:paths:N}, \eqref{eq:def:nocrazyloop} and \eqref{eq:def:nocrazypath} the definitions of $\Np_r(\randomloop)$, $\nocrazyloop(r,R)$ and $\nocrazypath(r,R)$. 
    By Equations~\eqref{eq:incr:Np} and \eqref{eq:incr:loop}, we can fix $r \in \Z_{>0}$ and $R > r$ large enough such that 
	\[
        \Ploop(\Np_r \geq 1) \geq 1-\eps/3 \quad \text{and} \quad \Ploop(\nocrazyloop(r,R)) \geq 1-\eps/3.
	\]
	Hence, by the union bound,
	\[
		\Ploop(\flippable \cap (\Np_r \geq 1) \cap \nocrazyloop(r,R)) \geq \eps/3 > 0.
	\]
    For any~$\randomloop \in \flippable \cap (\Np_r \geq 1) \cap \nocrazyloop(r,R)$, Lemma \ref{lem:unglue} applies: either~$\randomloop\in \glue_{\infty}$ or~$\tomega:=\flip_f(\randomloop)\in\glue_{\infty}$ for some~$f \in F(\Hexa) \cap B(R)$.
    The operation~$\flip_f$ is a measure-preserving operation and, for each~$\tomega$, the number of preimages is bounded by~$|F(\Hexa) \cap B(R)|$.
    This implies that $\Ploop(\glue_{\infty})>0$:
    \begin{align*}
        \eps/3 
        &\leq \Ploop(\flippable \cap (\Np_r \geq 1) \cap \nocrazyloop(r,R))\\
        &\leq \Ploop\Bigg(\glue_{\infty} \cup \bigcup_{f \in F(\Hexa) \cap B(R)} \flip_f^{-1}(\glue_{\infty})\Bigg)\leq (|F(\Hexa)\cap B(R)|+1)\Ploop(\glue_{\infty}).
    \end{align*}
    
    \bigskip
    
    \emph{Step~$2$: the swapping.} We are now in a position to apply the path swapping lemma. The events $\glue_r$ defined in \eqref{eq:def:glue:r} are increasing in $r$ so
    \begin{equation}
    	\label{eq:incr:glue}
        \Ploop(\glue_r) \xrightarrow[r\to\infty]{} \Ploop\left(\bigcup_{r \in \Z_{>0}}\glue_r\right) = \Ploop(\glue_{\infty}).
    \end{equation}
    By the second step, $\Ploop(\glue_{\infty}) > 0$ so we can fix $r_0 > 10$ such that $\delta := \Ploop(\glue_{r_0}) > 0$. By \eqref{eq:incr:loop} and \eqref{eq:incr:paths}, we can fix one after the other $R > 2r_1 > 4r_0$ large enough such that 
    \[
        \min(\Ploop(\nocrazyloop(r_0,r_1)),\Ploop(\nocrazyloop(r_1,R)),\Ploop(\nocrazypath(r_1,R)))\geq 1-\delta/4.
    \]
    Let
    \[
        \conditions := \glue_{r_0} \cap \nocrazyloop(r_0,r_1) \cap \nocrazyloop(r_1,R) \cap \nocrazypath(r_1,R).
    \]
    By the union bound,
    \begin{equation} \label{eq:conditions}
        \Ploop(\conditions) \geq \delta/4 > 0.
    \end{equation}
    Recall our assumptions~$\Npath \geq 3$ and~$\Ploop(\tri)=0$. Then,
	\begin{equation}\label{eq:def:contradict}
		\Ploop\big(\forall \cluster \in \infclusters(\randomloop) \text{ if } \gamma_1, \gamma_2 \in \paths(\randomloop) \text{ bound } \cluster \text{ then they are parallel}) = 1.
	\end{equation}
	Denote this event by~$\contradict$. Thus,
    \[
        \Ploop(\{\Npath \geq 3\} \cap \contradict \cap \conditions) \geq \frac{\delta}{4}>0.
    \]
    Let $\randomloop \in \{\Npath \geq 3\} \cap \contradict \cap \conditions$. 
    We check that conditions of Lemma \ref{lem:swap} are satisfied: some of them are implied by~$\conditions$; by definition of $\glue_{r_0}$, there exist $\gamma_1, \gamma_2 \in \paths_{r_0}(\randomloop)$ bounding a common infinite cluster which are not glued in $B(r_0)\subset B(r_1)$; finally, by~$\contradict$, the paths $\gamma_1$ and $\gamma_2$ are parallel.
    Hence, by Lemma~\ref{lem:swap}, there exist faces $f_1, \dots, f_k$ in $B(R) \cap F(\Hexa)$ with $k \leq |F(\Hexa)\cap B(r_1)|+1 =: n(r_1)$ such that~$\tomega:=\flip_{f_1,\dots, f_k}(\randomloop)\not\in \contradict$.
    Similarly to Step~$1$, we use that the~$\flip$ operation is measure-preserving to obtain~$\Ploop(\contradict)<1$:
    \begin{align*}
        \delta/4 \leq \Ploop(\{\Npath \geq 3\} \cap \contradict \cap \conditions) 
        &\leq \Ploop\Bigg(\bigcup_{\substack{f_1,\dots, f_k \in (F(\Hexa)\cap B(R))\\ k \leq n(r_1)}}\flip_{f_1,\dots, f_k}^{-1}(\contradict^{c})\Bigg)\\
        &\leq n(r_1)|F(\Hexa)\cap B(R)|^{n(r_1)}\cdot (1-\Ploop(\contradict)).\qedhere
    \end{align*}
\end{proof}

\begin{proof}[Proof of Lemma~\ref{lem:k:diff:2}]
    The idea of the proof is that when $\conditions(\fzero)$ and $\conditions(f)$ happen simultaneously with $|\fzero - f| \geq \cR$, applying path swapping (Lemma \ref{lem:swap}) both around $\fzero$ and $f$ creates a loop of length $\cR$ intersecting $B(R)$ with probability bounded from below by some constant $c(R)>0$ uniformly in~$\cR$.
    Letting~$\cR$ tend to infinity, we get a contradiction.
    
    Let us make this formal.
    We reason by contradiction, assuming that 
    \begin{equation}\label{eq:contradict:N=2}
        \Ploop(\Npath=2)=1.
    \end{equation}
    We let $\paths(\randomloop) = \{\gamma_1, \gamma_2\}$. 
    As in the proof of Lemma~\ref{lem:k:inf:3}, fix $R > 2r_1 > 4r_0 > 10$ such that \eqref{eq:conditions} holds.
    For $f = k + \ell e^{i\pi/3} \in F(\Hexa)$ with $k,\ell \in \Z$, we denote by $\conditions(f)$ the translation of the event $\conditions$ by $k + \ell e^{i\pi/3}$. By translation invariance, $\Ploop(\conditions(f)) = \Ploop(\conditions) > 0$ for all $f \in F(\Hexa)$.
    Let $k \in \N$ such that 
    \be\label{eq:choice:N}
        k > 2\Ploop(\conditions)^{-1}.
    \ee
    Let $\cR > 2R$.
    If we pick any $f_1, ..., f_k \in F(\Hexa)$ such that $|f_i - f_j| \geq \cR$ for all $i \neq j$, then by inclusion-exclusion 
    \be
        \begin{aligned}
        1 \geq \Ploop\left(\bigcup_{i=1}^k \conditions(f_i)\right) &\geq \sum_{i = 1}^k \Ploop(\conditions(f_i)) - \sum_{i \neq j} \Ploop(\conditions(f_i) \cap \conditions(f_j))\\
        & \overset{\eqref{eq:choice:N}}{\geq} 2 - \sum_{i \neq j} \Ploop(\conditions(f_i) \cap \conditions(f_j)).
        \end{aligned}
    \ee
    Hence, there exist $i \neq j$ with 
    \be
        \Ploop(\conditions(f_i) \cap \conditions(f_j)) \geq \frac{2}{k(k-1)} > 0.
    \ee
    By translation invariance, there exists $f$ with $|f-\fzero| \geq \cR$ such that
    \be
        \Ploop(\conditions(\fzero) \cap \conditions(f)) \geq \frac{2}{k(k-1)} > 0.
    \ee
    Note that $k$ \emph{does not depend on $\cR$} and that the face $f$ is fixed (it does not depend on $\randomloop$). Since we assumed that \eqref{eq:contradict:N=2} holds, it implies 
    $$
        \Ploop((\Npath =2)\cap\conditions(\fzero) \cap \conditions(f)) \geq \frac{2}{k(k-1)} > 0.
    $$
    Applying path swapping Lemma~\ref{lem:swap} both around $0$ and $f$ creates a loop of length at least $L - 2R$ that intersects $B(R)$, see Figure~\ref{fig:create:loop}.
    \begin{figure}\centering
        \begin{overpic}[abs,unit=1mm,scale=.5]{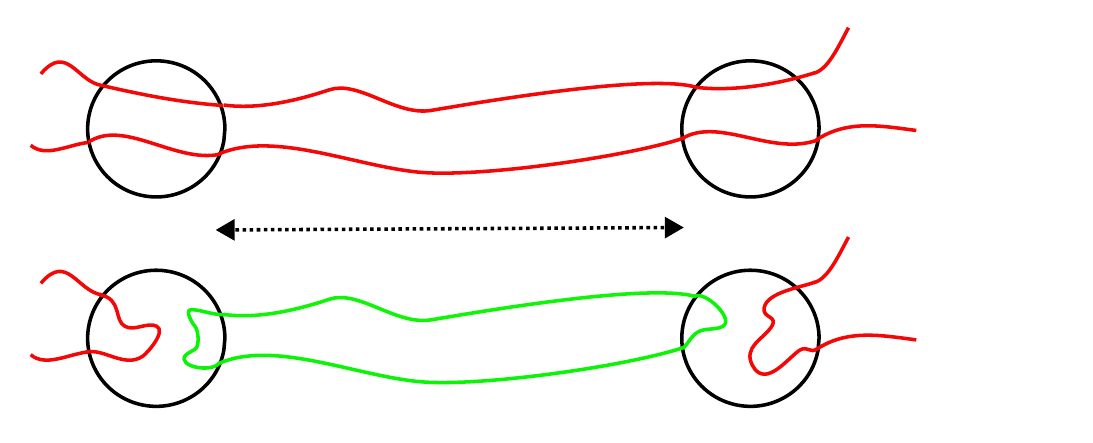}
        \put(9,34){$B(r_3)$}
        \put(58,34){$B(x,r_3)$}
        \put(37,15){$R$}
        \put(-2,31){\textcolor{red}{$\gamma_1$}}
        \put(-2,25){\textcolor{red}{$\gamma_2$}}
        \put(0,14){\textcolor{red}{$\tilde{\gamma}_1$}}
        \put(79,9){\textcolor{red}{$\tilde{\gamma}_2$}}
        \put(37,2){\textcolor{green}{$\ell$}}
        \end{overpic}
        \caption{On the top: the two infinite paths intersecting simultaneously two boxes at distance at least $R$. On the bottom: the situation after the flip moves in both boxes, a long loop $l$ is created.}
        \label{fig:create:loop}
    \end{figure}
    Using that $\Ploop$ is invariant by flips as in the proof of the preceding lemma, we obtain that
    \be
        \Ploop(\exists \ell \in \loops_{R}(\randomloop), |\ell| \geq \cR) \geq c(R) \frac{2}{k(k-1)}.  
    \ee
    for some $c(R)>0$ depending only on $R$. Since the events on the left-hand side are decreasing in $\cR$,
    \be
        \Ploop(\forall \cR > 2R, \exists \ell \in \loops_R(\randomloop), |\ell| \geq R) \geq c(R) \frac{2}{k(k-1)} > 0.
    \ee
    This is a contradiction: since all loops are finite, this event has probability $0$.
\end{proof}

\begin{Rem}
    Note that this proof also applies to exclude any $\Npath \in \Z_{\geq 2}$ (but not $\Npath = \infty$).
\end{Rem}

\appendix
\section{Fundamental results on dimers from \cite{She05} and \cite{KenOkoShe06}}\label{app:dimers}
In this Appendix, we recall the general theorems of \cite{She05} and \cite{KenOkoShe06} in a particular setting of the hexagonal lattice. 
In this section, we will denote a face $f \in F(\Hexa)$ by its coordinate.
The center of any face $f$ can be written uniquely under the form $k\cdot 1 + l \cdot e^{\pi i/3}$, in which case we identify $f = (k,l)$. 
Recall the height function $h_\dim$ associated with a dimer configuration $\dim$ (Definition~\ref{def:height:function}).
If $f = (k,l) \in F(\Hexa)$, we write $h_{\dim}(f) = h_m(k,l)$.
Let $\Pdimer$ be a translation-invariant Gibbs measure and denote by $\E$ the expectation under this measure. The \emph{slope} $(s,t) \in \R^2$ of $\Pdimer$ is defined by
\[
	s = \E[h_\dim(-1,0)] \quad \text{and} \quad  t = \E[h_\dim(0,1)].
\]
For $k \in \N$, let $\cM_k$ denote the set of dimer configurations which are invariant to the translations by $k$ and $k\cdot e^{\pi i/3}$.
For $\dim \in \cM_k$, the \emph{height change} $(h_\dim^1,h_\dim^2)$ is defined by
\[
	(h_\dim^1,h_\dim^2):= \big(h_\dim(-k,0), h_\dim(0,k)\big).
\]
Note that even though $\dim$ is periodic, $h_\dim$ is not so the height change is a non-trivial quantity. For $i,j \in \Z^2$, define
$$
	\cM_k(i,j) := \{\dim \in \cM_k, (h_\dim^1,h_\dim^2) = (i,j)\}.
$$
The following result is due to Sheffield \cite{She05}, but we give the version of \cite{KenOkoShe06}.
\begin{Thm}[\cite{She05}]\label{thm:toroidal:exhaustion}
   The slope $(s,t)$ of a Gibbs measure $\Pdimer$ is an element of the polygon $3(\NewtonPolygon-(1,1))$ where $\NewtonPolygon$ 
   is the triangle with vertices $(0,0),(0,1),(1,0)$. 
   Reciprocally, for $(s,t) \in 3(\NewtonPolygon-(1,1))$, the weak limit $\Pdimer$ of the uniform measures on $\cM_k(\lfloor sk \rfloor, \lfloor tk \rfloor)$ exists and is the unique translation-invariant ergodic Gibbs measure on $\cM$ with slope $(s,t)$.
   
   Moreover, for $(s,t) \in 3(\NewtonPolygon-(1,1))$, the unique translation-invariant ergodic Gibbs measure of slope $(s,t)$ is non-frozen if and only if $(s,t)$ lies in the interior of $3(\NewtonPolygon-(1,1))$.
\end{Thm}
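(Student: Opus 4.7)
The plan is to invoke Sheffield's classification theorem from~\cite{She05} and specialize it to the hexagonal lattice. Sheffield's general result states that on any bipartite $\Z^2$-periodic planar weighted graph, the translation-invariant ergodic Gibbs measures for the dimer model are in bijection with their slope, which ranges over the Newton polygon of the spectral curve; moreover, the measure of slope $(s,t)$ is realized as the local weak limit of uniform measures on toroidal exhaustions with prescribed height change $(\lfloor sk\rfloor, \lfloor tk\rfloor)$. Once this theorem is accepted as a black box, the work reduces to (i) identifying the Newton polygon and the slope normalization for $\Hexa$, and (ii) characterizing the frozen measures geometrically.

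First, I would compute the slope polygon by a direct height-function calculation. From Definition~\ref{def:height:function}, the increment of $h_\dim$ across a single edge takes values in $\{-2,+1\}$ or $\{-1,+2\}$ depending on the color of the endpoint, so $s = \E[h_\dim(-1,0)] = 1 - 3\,\Pdimer(e)$ for a specific translation class of edges $e$, and analogously $t = 1 - 3\,\Pdimer(e')$ for a second class, while the third edge class contributes a constraint $-s-t \leq 2$ by summing probabilities to one around a black vertex. Since each edge probability lies in $[0,1]$, these three inequalities cut out exactly the triangle $3(N-(1,1))$, proving statement (a). For the translation-invariant (not a priori ergodic) case, the ergodic decomposition expresses $\Pdimer$ as an average of ergodic Gibbs measures, and the slope inherits the triangular constraint by linearity.

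For the existence and ergodicity of the weak limit, I would use Kasteleyn's determinantal formula on the torus to write the uniform measure on $\cM_k(\lfloor sk\rfloor, \lfloor tk\rfloor)$ as a determinantal point process, verify that the kernel converges locally to the explicit kernel given by Theorem~\ref{thm:KOS}, and conclude that a unique weak limit $\Pdimer_{s,t}$ exists, is translation-invariant, and has slope $(s,t)$. Ergodicity follows from the rapid decay of the limiting kernel (determinantal processes with sufficiently decaying kernel are mixing). Uniqueness among translation-invariant ergodic Gibbs measures of slope $(s,t)$ is the content of Sheffield's theorem, which is the deepest input and the main obstacle if one wanted to prove everything from scratch; here we merely invoke it. Finally, for the non-frozen characterization, I would use that a measure is frozen precisely when some edge class has probability zero or one, which by the formula $s = 1 - 3\,\Pdimer(e)$ above occurs exactly when $(s,t)$ lies on the boundary of $3(N-(1,1))$; in the interior, all three edge probabilities lie in $(0,1)$, giving non-frozenness. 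The bulk of the analytical work is deferred to \cite{She05, KenOkoShe06}, and the appendix's role is essentially bookkeeping: translating between the abstract Newton polygon and the concrete height-function slope for $\Hexa$.
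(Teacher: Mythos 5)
Your proposal takes essentially the same route as the paper: cite Sheffield's classification (and its toroidal-exhaustion form from \cite{KenOkoShe06}) as a black box for existence/uniqueness, identify the slope polygon via the height-function increment bounds, and characterize frozen measures by noting that the boundary of the polygon is exactly where some edge-class probability hits $\{0,1\}$. The only slip is a sign/normalization in your formula $s = 1 - 3\,\Pdimer(e)$ (the paper's bounds $-1\le s\le 2$ require $s = -1 + 3\,\Pdimer(e)$ for that edge class), but this does not affect the logic and the rest is sound.
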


\begin{Rem}
    For a general bipartite $\Z^2$-periodic graph with periodic weights, the set of slopes is a translate of the Newton polygon (see e.\,g. \cite[Section~$3.1.4$]{KenOkoShe06} for a definition). Our definition of the height function is $3$ times that of \cite{KenOkoShe06}, hence the multiplicative factor $3$ for the set of allowed slopes.  The translation depends on the choice of the reference flow in the definition of the height function. The correct translation is obtained as a byproduct of the following proof.
\end{Rem}

    The first part of the statement is proven in~\cite[Proposition~$3.2$, Theorem~$2.1$]{KenOkoShe06}.
    We explain how to obtain the last statement of the theorem concerning frozen measures since this is not directly stated in \cite{KenOkoShe06}: they use a definition of a frozen measure involving the height function which is less natural in the context of the loop $O(1)$ model.
    The argument that we use here is taken from~\cite[Section~$4.2.1$]{KenOkoShe06}. 
\begin{proof}
	Let~$\Pdimer$ be a translation-invariant ergodic Gibbs measure on dimers with slope $(s,t) \in 3(\NewtonPolygon-(1,1))$.
	We need to show that~$\Pdimer$ is frozen if and only if $(s,t)$ lies on the boundary of $3(\NewtonPolygon-(1,1))$. By Definition \ref{def:height:function}, for any dimer configuration $\dim$, we have 
    \[
        -1 \leq h_\dim(-1,0) \leq 2 \quad ; \quad -1 \leq h_\dim(0,1) \leq 2 \quad ; \quad -2 \leq h_\dim(-1,1) \leq 1.
    \]
    Taking expectations (and using translation-invariance), we obtain
    \[
        -1 \leq s \leq 2 \quad ; \quad -1 \leq t \leq 2 \quad ; \quad -2 \leq t+s \leq 1.
    \]
    In other words, $s$ and $t$ belong to the triangle $3(\NewtonPolygon-(1,1))$. In particular, $(s,t)$ lies on the boundary of $3(\NewtonPolygon-(1,1))$ if and only if one of these inequalities is an equality. 
    
    If none of these inequalities is an equality, then, for all $e \in E(\Hexa)$, we have $0 < \Pdimer(e) < 1$, whence $\Pdimer$ is non-frozen. 
    
    If at least one of these inequalities is an equality, this means that there exists an edge $e$ with $\Pdimer(e) \in \{0,1\}$.
    By translation-invariance, either (i) all translates of~$e$ are in~$\dim$ almost surely or (ii) none of the translates of~$e$ is in~$\dim$ almost surely.
    By Definition~\ref{def:frozen}, the corresponding loop measure~$\Ploop$ (and hence~$\Pdimer$)  is frozen: if (i) occurs, then~$\Ploop$ is concentrated on one configuration given by removing all translates of~$e$ from~$\Hexa$; if (ii) occurs, then all translates of~$e$ belong to the loop configuration almost surely.
\end{proof}

Using Theorem \ref{thm:toroidal:exhaustion}, it is easy to see why Lemma \ref{lem:flip} holds.
\begin{proof}[Proof of Lemma \ref{lem:flip}]
    The uniform measures on $\cM_k(\lfloor sk \rfloor, \lfloor tk \rfloor)$ mentioned in Theorem \ref{thm:toroidal:exhaustion} are invariant to the flip. Hence, so are their limits, which parameterize the set of all translation-invariant ergodic Gibbs measures.
\end{proof}

The set of translation-invariant ergodic Gibbs measures on bipartite graphs has another more explicit parametrization: this is the object of the next theorem. 
Recall that the set of vertices is partitioned into $WV$ and $BV$. 
Given a weight function $\rho: E(\Hexa) \to \R_{>0}$, the associated \emph{weighted Kasteleyn matrix} $K$ of the hexagonal lattice  is the weighted adjacency matrix with rows and columns indexed by~$w\in WV$ and~$b\in BV$:
\begin{equation}
	\label{eq:def:kasteleyn}
    K(w,b) :=
    \begin{cases}
    	\rho(wb) & \text{if }w \sim b \\
    	0 & \text{otherwise.}
    \end{cases}
\end{equation}
For $A,B,C \geq 0$, we denote by $K_{(A,B,C)}$ the Kasteleyn matrix associated with the weight function $\rho$ defined on pairs of adjacent $w \in WV$ and~$b\in BV$ by 
\[
    \rho(wb) = 
    A \cdot \1_{(b-w)\sqrt{3} = i} +  
    B \cdot \1_{(b-w)\sqrt{3} = e^{7\pi i/6}} + 
    C \cdot \1_{(b-w)\sqrt{3} = e^{-\pi i/6}}.
\]
\begin{Rem}
    In the general Kasteleyn theory \cite{Kas67}, the entries of~$K$ are signed with respect to a \emph{Kasteleyn orientation}.
    In the latter, for each face, the number of its sides oriented counterclockwise is odd.
    Orienting all edges from white to black provides a natural periodic Kasteleyn orientation on~$\Hexa$ and simplifies~$K$ to what we see in~\eqref{eq:def:kasteleyn}.
\end{Rem}

Kenyon, Okounkov and Sheffield give an explicit description of the set of all translation-invariant ergodic Gibbs measures on dimer configurations of $\Z^2$-periodic graphs. In the case of the hexagonal lattice, it becomes the following theorem. 

\begin{Thm}[\cite{KenOkoShe06}]\label{thm:KOS}
    The set of all translation-invariant ergodic Gibbs measures of Theorem \ref{thm:toroidal:exhaustion} can be alternatively parameterized as $\{\Pdimer_{(A,B,C)},~A,B,C > 0\}$. For all $A,B,C > 0$, the following explicit expression holds: for $\{w_1b_1,\dots,w_nb_n\} \subset E(\Hexa)$,
    \[
        \Pdimer_{(A,B,C)}(w_1b_1,\dots,w_nb_n) = \left(\prod_{i=1}^n K_{(A,B,C)}(w_i,b_i)\right) \det(K_{(A,B,C)}^{-1}(b_i,w_j)_{1\leq i,j \leq n})
    \]
    where $K^{-1}_{(A,B,C)}$ is defined for all $w \in WV, b \in BV$ by
    \begin{equation}\label{eq:K:inverse}
        K^{-1}_{(A,B,C)}(b,w) = \frac{1}{(2i\pi)^2}\int_{|z_1|=|z_2|=1}\frac{z_1^{[wb]_1} z_2^{[wb]_2}}{A+\frac{B}{z_2}+Cz_1}\cdot\frac{dz_1}{z_1}\cdot\frac{dz_2}{z_2}.
    \end{equation}
    with
    \[
        w - b = \tfrac{1}{\sqrt{3}}e^{\pi i/6}+ [wb]_1 \cdot e^{\pi i /3} + [wb]_2 \quad ; \quad [wb]_1, [wb]_2 \in \Z^2.
    \]
    The measure $\Pdimer_{(1,1,1)}$ is the weak limit of the uniform measure on the set $\cM_k$ of dimer configurations invariant to the translations by $k\cdot 1$ and $k \cdot e^{i\pi/3}$. 
\end{Thm}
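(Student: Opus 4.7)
The plan is to follow the Kasteleyn approach specialized to the hexagonal torus. For each $k \geq 1$, consider the quotient $\Hexa_k := \Hexa/(k\Z \oplus k e^{i\pi/3}\Z)$ equipped with the edge weights $\rho$ defined before Theorem~\ref{thm:KOS}. By Kasteleyn's theorem on a bipartite torus, the weighted partition function can be written as an explicit alternating combination of four determinants $\det(K_k^{\theta,\tau})$, where $(\theta,\tau) \in \{0,\tfrac12\}^2$ indexes the four spin structures (bipartiteness reduces the Pfaffians appearing in the general theory to determinants). Similarly, the probability that a fixed collection of edges $w_1b_1,\dots,w_nb_n$ is all present on $\Hexa_k$ is a signed combination of four terms, each of the form $\prod_i K_k^{\theta,\tau}(w_i,b_i)\cdot\det((K_k^{\theta,\tau})^{-1}(b_i,w_j))_{i,j}$.

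Next, I would exploit the $\Z/k\Z \oplus \Z/k\Z$-invariance of each $K_k^{\theta,\tau}$ to diagonalize it in the (twisted) Fourier basis. Its eigenvalues take the form $A + B/z_2 + C z_1$ for $(z_1, z_2)$ ranging over appropriately shifted $k$-th roots of unity, giving an explicit Riemann-sum formula for $(K_k^{\theta,\tau})^{-1}(b,w)$. Passing to the limit $k\to\infty$, each of the four sums converges to the double integral \eqref{eq:K:inverse}; the standard argument (based on the fact that the curve $\{A + B/z_2 + Cz_1 = 0\}$ meets the unit torus in a set of measure zero when $A,B,C>0$) shows that in the limit the signed four-term combination collapses to the single determinantal expression in the theorem. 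This defines a probability measure $\Pdimer_{(A,B,C)}$ on $\cM$ with the claimed explicit form.

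The Gibbs property is inherited from the DLR equations on the tori and tightness of the limit; translation invariance passes to the weak limit automatically, and ergodicity follows from the decay of connected correlators read off from the Fourier expression for $K^{-1}$ (the correlations are determinants of $K^{-1}$ entries, and these entries go to zero at infinity away from the so-called gaseous/liquid regimes handled case by case). To conclude, one matches this description with Theorem~\ref{thm:toroidal:exhaustion}: a direct computation of the slope,
\[
    s = \E[h_\dim(-1,0)] \quad \text{and} \quad t = \E[h_\dim(0,1)],
\]
in terms of the edge probabilities $\Pdimer_{(A,B,C)}(wb) = K_{(A,B,C)}(w,b) \cdot K^{-1}_{(A,B,C)}(b,w)$ produces contour integrals in $(A,B,C)$ whose image, after quotienting by the trivial scaling $(A,B,C) \mapsto (\lambda A, \lambda B, \lambda C)$, covers the interior of $3(\NewtonPolygon - (1,1))$. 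The uniqueness part of Theorem~\ref{thm:toroidal:exhaustion} then identifies $\Pdimer_{(A,B,C)}$ with the unique translation-invariant ergodic Gibbs measure of that slope. The last assertion is immediate: when $A=B=C=1$, the weights are constant, so the induced measures on $\cM_k$ are exactly the uniform measures.

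The main obstacle is the asymptotic analysis of the four-determinant formula: showing rigorously that only one of the four spin-structure terms contributes to the thermodynamic limit, and controlling the zeros of the characteristic polynomial $P(z_1,z_2) = A + B/z_2 + Cz_1$ on the unit torus (this is precisely where the Newton polygon, and with it the classification of frozen, smooth and rough phases, naturally appear). Once this analytic input is in place, everything else reduces to Fourier inversion and bookkeeping.
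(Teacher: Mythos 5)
The paper does not prove Theorem~\ref{thm:KOS}; it is quoted verbatim from Kenyon--Okounkov--Sheffield \cite{KenOkoShe06} (together with \cite{She05}), and the Appendix only specializes and repackages these results for the hexagonal lattice, proving corollaries of them rather than the theorem itself. So there is no ``paper's own proof'' to compare against. What you have written is a faithful high-level summary of the original KOS argument: finite-torus Kasteleyn theory with the four spin structures, Fourier diagonalization giving eigenvalues $A + B/z_2 + Cz_1$, Riemann-sum convergence to the double contour integral \eqref{eq:K:inverse}, inheritance of the DLR, translation-invariance and ergodicity properties in the limit, and matching against the slope parametrization of Theorem~\ref{thm:toroidal:exhaustion}. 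The last assertion about $\Pdimer_{(1,1,1)}$ is handled correctly.

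Two small caveats. First, you describe the thermodynamic limit of the four-term combination as ``only one of the four spin-structure terms contributes''; in the rough regime the more accurate picture is that all four twisted kernels converge (after normalizing by the corresponding twisted partition functions) to the same double integral, and the signs/weights in the Kasteleyn alternating sum are what make the limit collapse to the single determinantal expression. This is exactly the part you flag as the ``main obstacle,'' so there is no hidden error, but the phrasing suggests a cancellation mechanism slightly different from the actual one. Second, the reduction from $\cM_k$ to the uniform torus measure when $A=B=C=1$ is immediate as you say, but one still needs that the unrestricted uniform measure on the torus concentrates on the slope $(0,0)$ (the maximizer of the toroidal free energy) to match with Theorem~\ref{thm:toroidal:exhaustion}; this concentration-of-slope step is present in \cite{KenOkoShe06} and should be mentioned explicitly in a full write-up. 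As a roadmap to the cited literature the proposal is sound; as a self-contained proof it deliberately leaves the analytic core to the original reference, which is the right call given the scope of the statement.
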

Note that since $K_{(A,B,C)}$ is an infinite matrix, the existence of its inverse is not obvious, so here we take~\eqref{eq:K:inverse} as a definition of $K_{(A,B,C)}^{-1}$.
\begin{Rem}
    1) The measures $\Pdimer_{(A,B,C)}$ are obtained as weak limits of the measures on $\cM_k$ attributing a different weight $A,B,C$ to the three different types of edges modulo translation.

    2) The parametrizations of Theorems \ref{thm:toroidal:exhaustion} and Theorem \ref{thm:KOS} can be related via the concepts of \emph{surface tension} and \emph{Ronkin function} which we will not introduce here (see \cite[Sections~$3.2.3$ and~$3.2.4$]{KenOkoShe06} for a definition).
\end{Rem}

A corollary of the explicit expression of $\Pdimer_{(A,B,C)}$ is the following.
\begin{Cor}[\cite{KenOkoShe06}]\label{cor:frozen}
    For $A,B,C > 0$, the translation-invariant ergodic Gibbs measure $\Pdimer_{(A,B,C)}$ is non-frozen if and only if $A,B,C$ satisfy the strict triangle inequality.
\end{Cor}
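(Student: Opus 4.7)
The plan is to compute the single-edge probabilities under $\Pdimer_{(A,B,C)}$ using the explicit formula of Theorem~\ref{thm:KOS} and to identify the vanishing locus of these probabilities with the failure of the triangle inequality. First I reduce the statement: by Definition~\ref{def:frozen} together with the bijection of Section~\ref{subsec:loop&dimers}, $\Pdimer_{(A,B,C)}$ is non-frozen if and only if $0 < \Pdimer_{(A,B,C)}(e) < 1$ for every $e \in E(\Hexa)$. Since a dimer configuration has exactly one edge at each vertex, the three edge-probabilities incident to any fixed white vertex sum to~$1$, and by translation invariance they depend only on the weight class ($A$, $B$, or $C$). It therefore suffices to show that each of the three numbers $\Pdimer_{(A,B,C)}(e_A)$, $\Pdimer_{(A,B,C)}(e_B)$, $\Pdimer_{(A,B,C)}(e_C)$ is strictly positive precisely when $|A-B|<C<A+B$ (and its cyclic permutations).

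Next I compute one representative probability. For a type~$B$ edge $wb$ with $b = w + e^{7\pi i/6}/\sqrt{3}$, a direct check gives $w-b = e^{\pi i/6}/\sqrt{3}$, so $[wb]_1=[wb]_2=0$ in the notation of Theorem~\ref{thm:KOS}, and hence
\[
    \Pdimer_{(A,B,C)}(wb) = B\cdot K_{(A,B,C)}^{-1}(b,w) = \frac{B}{(2\pi i)^2}\int_{|z_1|=|z_2|=1}\frac{1}{A+B/z_2+Cz_1}\cdot\frac{dz_1}{z_1}\cdot\frac{dz_2}{z_2}.
\]
The inner $z_1$-integral is elementary: the integrand has simple poles at $z_1=0$ and $z_1=-(A+B/z_2)/C$ with opposite residues, so it equals $2\pi i/(A+B/z_2)$ when $|A+B/z_2|>C$ (only the pole at $0$ is inside) and vanishes when $|A+B/z_2|<C$ (both poles inside, cancellation). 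Consequently
\[
    \Pdimer_{(A,B,C)}(wb) = \frac{B}{2\pi i}\int_{\{|z_2|=1\,:\,|A+B/z_2|>C\}}\frac{dz_2}{Az_2+B},
\]
with the analogous formulas for the type~$A$ and type~$C$ edges obtained by permuting the weights.

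The final step is to relate the integration domain to the triangle inequality. Since $|A+B/z_2|^2 = A^2+B^2+2AB\cos(\arg z_2^{-1})$ runs over $[(A-B)^2,(A+B)^2]$ as $z_2$ traverses the unit circle, the arc $\{|A+B/z_2|>C\}$ is empty when $C\ge A+B$, is the whole circle when $C\le |A-B|$, and is a proper symmetric arc exactly when $|A-B|<C<A+B$. In the first case the above integral is zero, so $\Pdimer_{(A,B,C)}(e_B)=0$ and the measure is frozen; in the second case, the symmetric formula applied to another weight class places that class in the first regime, again giving a frozen measure. In the remaining strict-triangle regime one parametrizes the arc endpoints via $\cos\theta_0 = (C^2-A^2-B^2)/(2AB)$ and evaluates $\int dz_2/(Az_2+B)$ as an elementary logarithm along the arc.

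The main obstacle I anticipate is the last step, namely verifying that in the strict triangle inequality regime the arc integral is genuinely strictly positive rather than an accidental cancellation. I expect the cleanest route is to combine the two degenerate regimes just analysed with the vertex identity $\Pdimer_{(A,B,C)}(e_A)+\Pdimer_{(A,B,C)}(e_B)+\Pdimer_{(A,B,C)}(e_C)=1$ to force each of the three contributions into $(0,1)$; alternatively, the logarithms at the arc endpoints evaluate to $1/\pi$ times an interior angle of the triangle with sides $A$, $B$, $C$, which makes positivity geometrically transparent and even produces a closed-form expression for the edge probabilities.
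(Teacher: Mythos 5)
Your proposal takes essentially the same route as the paper: compute the single-edge probabilities from the explicit contour-integral formula of Theorem~\ref{thm:KOS}, reduce the inner $z_1$-integral by residues to an arc integral whose domain is controlled by the triangle inequality, and read off frozen/non-frozen from whether the arc is empty, the whole circle, or a proper arc. The paper's proof does exactly this and quotes the resulting closed form $\Pdimer(e_i)=\theta_i/\pi$, which is your route~(b). One caveat: your route~(a) — combining the two degenerate regimes with the vertex identity $\Pdimer(e_A)+\Pdimer(e_B)+\Pdimer(e_C)=1$ — is not by itself enough to force strict positivity in the strict-triangle regime, since the identity only constrains the sum, not the individual terms; also, the ``symmetric formulas obtained by permuting the weights'' claim needs some care because the three edge types have different exponents $[wb]_1,[wb]_2$ in the integrand (the symmetry of $\Hexa$ does permute them, but it is not a literal relabelling of $A,B,C$ in a fixed formula). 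The clean closure is the explicit angle computation of route~(b), which is what the paper invokes via Corollary~\ref{cor:hexagon:app} and~\cite[Section 6.1]{Ken09}.
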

This is obtained by evaluating explicitly $K^{-1}_{(A,B,C)}$ for the hexagonal lattice. 
If $\theta_A, \theta_B, \theta_C$ are the angles of the triangle with sides $A,B,C$, and if $e_1,e_2,e_3$ are the three types of edges of the hexagonal lattice (up to translation), computing directly edge probabilities gives
$$
    \Pdimer(e_1) = \frac{\theta_A}{\pi} \quad ; \quad \Pdimer(e_2) = \frac{\theta_B}{\pi} \quad ; \quad \Pdimer(e_3) = \frac{\theta_C}{\pi}.
$$
See~\cite[Section 6.1]{Ken09} for more details.

Another corollary of Theorem~\ref{thm:KOS} is the following. A translation-invariant ergodic non-frozen measure $\Pdimer$ is called \emph{smooth} if the variance of the height difference under $\Pdimer$ between two points is bounded by a constant independent of the distance and it is called \emph{rough} otherwise. 

\begin{Cor}[\cite{KenOkoShe06}]\label{cor:phase}
    Any non-frozen translation-invariant ergodic Gibbs measure $\Pdimer$ is rough (no smooth phase).
\end{Cor}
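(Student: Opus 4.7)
The plan is to reduce to the explicit formulas of Theorem~\ref{thm:KOS} and then compute the height variance directly. By Theorem~\ref{thm:KOS} and Corollary~\ref{cor:frozen}, any translation-invariant ergodic non-frozen Gibbs measure on $\cM$ coincides with $\Pdimer_{(A,B,C)}$ for some $A,B,C > 0$ satisfying the strict triangle inequality. It therefore suffices to prove, for every such triple, that $\Var_{\Pdimer_{(A,B,C)}}(h_\dim(f) - h_\dim(g))$ is unbounded as $\|f-g\| \to \infty$.

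The first technical ingredient is the large-distance asymptotics of $K^{-1}_{(A,B,C)}$. The denominator $P(z_1,z_2) := A + B/z_2 + C z_1$ in~\eqref{eq:K:inverse} vanishes on the unit torus $\{|z_1|=|z_2|=1\}$ precisely when one can write $-A$ as a sum of two complex numbers of moduli $B$ and $C$; under the strict triangle inequality, this happens at exactly two complex-conjugate simple (transverse) zeros $(z_1^\ast, z_2^\ast)$ and $(\overline{z_1^\ast}, \overline{z_2^\ast})$, while at equality these zeros coalesce. Deforming the contours of integration in~\eqref{eq:K:inverse} through the torus, picking up residues, and performing a standard stationary-phase analysis yields, for $b - w$ large,
\[
    K^{-1}_{(A,B,C)}(b,w) = \frac{\alpha_+(\theta)\, e^{i k_+ \cdot (b-w)} + \alpha_-(\theta)\, e^{i k_- \cdot (b-w)}}{\|b-w\|} + O(\|b-w\|^{-2}),
\]
where $\theta = \arg(b-w)$ and the frequencies $k_\pm$ and nonzero amplitudes $\alpha_\pm(\theta)$ are explicit. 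In particular, $|K^{-1}|$ decays only polynomially, never exponentially.

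Second, I would use the $n=2$ case of the determinantal formula in Theorem~\ref{thm:KOS} to express dimer--dimer covariances as
\[
    \mathrm{Cov}(\1_{e \in \dim},\, \1_{e' \in \dim}) = - K(w,b)\,K(w',b')\,K^{-1}(b,w')\,K^{-1}(b',w),
\]
so each covariance decays as $\|f(e) - f(e')\|^{-2}$ with an oscillatory prefactor. Fixing a straight dual path $f_0, f_1, \ldots, f_n$ in a generic direction and writing $h_\dim(f_n) - h_\dim(f_0)$ as the telescoping sum of increments from Definition~\ref{def:height:function}, the variance becomes a double sum of these covariances, with signed weights $\pm 3$. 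Grouping terms by $|i-j|$ and distinguishing oscillating from non-oscillating parts, the non-oscillating cross-term (coming from the product of the $\alpha_+$ and $\overline{\alpha_-}$ contributions) produces a Riemann sum for $\int_1^n dr/r$, hence a leading logarithmic divergence $\Var(h_\dim(f_n) - h_\dim(f_0)) \sim c \log n$ with $c > 0$.

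The main obstacle is verifying that $c > 0$: that the non-oscillating contribution is not accidentally killed by the height-increment signs $\pm(1 - 3\1_{e \in \dim})$ of Definition~\ref{def:height:function}. This reduces to an explicit residue computation at $(z_1^\ast, z_2^\ast)$. Geometrically, a nonzero answer reflects the fact that the spectral polynomial $P(z_1, z_2) = A + B/z_2 + Cz_1$ of the hexagonal lattice is linear (genus zero) and has no real node on the unit torus, so the amoeba of $P$ has no bounded complementary component, which is the general obstruction to a gas (smooth) phase in~\cite{KenOkoShe06}. A shorter, less self-contained route is to directly invoke the classification of~\cite[Section~4]{KenOkoShe06} — observing that a linear polynomial in two variables has an amoeba with no bounded complementary component — or the GFF scaling limit of the dimer height function due to Kenyon~\cite{Ken01}, which a fortiori yields logarithmic variance and hence roughness.
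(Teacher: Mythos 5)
Your approach matches the paper's. The paper does not give a self-contained argument: it reduces, via Theorem~\ref{thm:KOS} and Corollary~\ref{cor:frozen}, to $\Pdimer_{(A,B,C)}$ with $A,B,C$ satisfying a strict triangle inequality, and then simply cites \cite[Section~6.3]{Ken09} for the fact that the height variance grows like $\log$ of the distance, remarking that the result also follows directly from the phase diagram of \cite[Section~4]{KenOkoShe06}. You perform the same reduction and then sketch the underlying Kenyon computation (contour deformation through the two transverse zeros of $A+B/z_2+Cz_1$ on the unit torus, the $1/\|b-w\|$ decay of $K^{-1}$, the resulting $\log$-divergent variance) and list the same two shortcut alternatives (amoeba of a linear spectral curve has no bounded complementary component; Kenyon's GFF limit). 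This is the same route with the citation unpacked into a sketch; the only soft spot in your write-up is the 1D telescoping picture of the variance, which requires the oscillatory cancellation to kill the naive $O(n)$ contribution before the $\log n$ emerges, but you correctly flag that and it is indeed handled in the cited references.
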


The height difference between two faces can be recovered from $K^{-1}_{(A,B,C)}$.
It can be shown that for any $\Pdimer = \Pdimer_{(A,B,C)}$ with $A,B,C > 0$ satisfying the strict triangle inequality, the height difference between two faces is asymptotically the $\log$ of the distance, see~\cite[Section 6.3]{Ken09}. 
This corollary is also a direct consequence of the general phase diagram of $\Z^2$-periodic dimer models  established in~\cite[Section 4]{KenOkoShe06}. 

This explains why~\cite[Theorem~4.8]{KenOkoShe06} simplifies to what we see in Theorem~\ref{thm:rough}.

\begin{proof}[Proof of Theorem~\ref{thm:rough}]
	Consider any non-frozen translation-invariant ergodic Gibbs measure~$\Pdimer$ for dimers on~$\Hexa$.
	By Corollary~\ref{cor:phase}, it is rough.
	By~\cite[Theorem~4.8]{KenOkoShe06}, for any rough ergodic Gibbs measure on a $\Z^2$-periodic bipartite graph, the associated double dimer law $\Pddimer$ exhibits almost surely infinitely many alternated cycles around every point.
\end{proof}

Another corollary of Theorem~\ref{thm:KOS} is the following. 
\begin{Cor}\label{cor:hexagon:app}
    Let $A,B,C > 0$ satisfy a strict triangle inequality. Let $\theta_A, \theta_B, \theta_C >0$ denote the angles of the triangle with side lengths $A,B,C$ and $e_1,\dots,e_6$ be the consecutive edges of $\fzero$.
    Then,
    \begin{equation}\label{eq:dimers-explicit}
    	\Pdimer(e_1,e_3,e_5) = \frac{1}{\pi^3}
        \det
        \begin{pmatrix}
            -\sin(\theta_A) & \theta_B & \theta_C \\
            \theta_B & -\sin(\theta_C) & \theta_A \\
            \theta_C & \theta_A & -\sin(\theta_C)
        \end{pmatrix}.
    \end{equation}
    In particular, this probability is strictly positive, because $\sin(\theta)<\theta$ for any~$\theta>0$.
\end{Cor}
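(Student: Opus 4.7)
The plan is to apply Theorem~\ref{thm:KOS} directly and then simplify the resulting $3\times 3$ determinant. I would first label the consecutive edges of $\partial\fzero$ so that $e_1,e_3,e_5$ form one of the two alternating triples on the hexagon; bipartiteness ensures each of them has a well-defined white endpoint $w_i$ and black endpoint $b_i$, and these three edges realise the three translation classes of edges in $\Hexa$, so after possibly relabelling the triple $(A,B,C)$ we may assume $K_{(A,B,C)}(w_1,b_1)=A$, $K_{(A,B,C)}(w_3,b_3)=B$, $K_{(A,B,C)}(w_5,b_5)=C$. Theorem~\ref{thm:KOS} then gives
\[
\Pdimer_{(A,B,C)}(e_1,e_3,e_5)=ABC\cdot\det\bigl[K^{-1}_{(A,B,C)}(b_i,w_j)\bigr]_{i,j\in\{1,3,5\}}.
\]

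The substantive step is to evaluate the nine entries $K^{-1}_{(A,B,C)}(b_i,w_j)$ via the double contour integral~\eqref{eq:K:inverse}. For each pair, the displacement $([w_jb_i]_1,[w_jb_i]_2)$ is one of a small set of integer vectors. I would first perform the $z_2$-integral by residues at the pole $z_2=-B/(A+Cz_1)$, which lies inside $\{|z_2|=1\}$ precisely on the arc of $\{|z_1|=1\}$ where $|A+Cz_1|>B$; by the law of cosines applied to the triangle with sides $A,B,C$ this arc has angular length $2(\pi-\theta_B)$, and the analogous arcs for the other pole configurations have lengths $2(\pi-\theta_A)$ and $2(\pi-\theta_C)$. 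This is precisely how the angles $\theta_A,\theta_B,\theta_C$ enter. The remaining $z_1$-integral along the relevant arc is an elementary integral of a rational function; the outcome is either $\theta_X/(\pi X)$ on the diagonal (matching the single-edge probabilities $\theta/\pi$ recorded just after Corollary~\ref{cor:frozen}), an off-diagonal term of the form $\theta_Y/(\pi\cdot\mathrm{weight})$, or a term of the form $-\sin\theta_Z/(\pi\cdot\mathrm{weight})$, the latter arising from a higher monomial $z_1^{k}$ in the integrand via $e^{i\alpha}-e^{-i\alpha}=2i\sin\alpha$. Multilinearity of the determinant then lets me factor $1/(\pi A),1/(\pi B),1/(\pi C)$ out of the three rows, cancelling the $ABC$ prefactor and producing the $1/\pi^3$ in front of a matrix depending only on the three angles, matching~\eqref{eq:dimers-explicit}.

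For the strict positivity claim, I would expand the $3\times 3$ determinant and invoke the elementary bound $\sin\theta<\theta$ for $\theta\in(0,\pi)$, which applies here because the strict triangle inequality forces $\theta_A,\theta_B,\theta_C\in(0,\pi)$, together with $\theta_A+\theta_B+\theta_C=\pi$. The expansion contains positive contributions of the form $\theta_i^2\sin\theta_j$ and $2\theta_A\theta_B\theta_C$, and a single negative term $-\sin\theta_A\sin\theta_B\sin\theta_C$; applying the sharp bound once per factor of the latter makes it strictly smaller than $\theta_A\theta_B\theta_C$, which is strictly dominated by the positive part. The main obstacle I anticipate is the contour bookkeeping in the middle paragraph: correctly identifying, for each of the nine displacement vectors, which arc of $\{|z_1|=1\}$ carries the $z_2$-residue and cleanly recognising the resulting elementary $z_1$-integrals as the geometric angles of the triangle with sides $A,B,C$. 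This identification is classical in the dimer literature (see~\cite[Section~6]{Ken09}), but it is where all the geometric content of the formula actually resides.
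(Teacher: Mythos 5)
Your plan is essentially the route the paper itself points to: it cites Kenyon~\cite{Ken97} for the case $A=B=C=1$ and \cite[Section~6]{Ken09} for the general-$(A,B,C)$ evaluation of $K^{-1}$, then declares that the same contour computation gives the formula. Your sketch fills in what that computation looks like — residues in $z_2$, the law of cosines identifying the arc of integration with the triangle angles, and the final positivity argument — so in that sense you are describing the same proof, only at greater resolution.

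However, one of your concrete claims is in tension with the target formula and would trip up the final bookkeeping. You assert that the diagonal entries of $[K^{-1}(b_i,w_j)]_{i,j\in\{1,3,5\}}$ are $\theta_X/(\pi X)$, and that factoring $1/(\pi A),1/(\pi B),1/(\pi C)$ out of the three rows turns the determinant into $\tfrac{1}{\pi^3}\det M$ with $M$ depending only on the angles. If the diagonal were indeed $\theta_X/(\pi X)$, then after this row rescaling $M$ would have $\theta_A,\theta_B,\theta_C$ on the diagonal — yet the matrix in \eqref{eq:dimers-explicit} has $-\sin\theta$ on the diagonal and $\theta$'s off it. (Already for $A=B=C=1$ these two determinants differ in sign.) The reason is that the off-diagonal entries $K^{-1}(b_i,w_j)$, $i\neq j$, split into two genuinely different families: three of them correspond to the other alternating edges $e_2,e_4,e_6$ (nearest-neighbour displacements, carrying their \emph{own} weight ratios, so the naive row rescaling does not clean them up), and the other three correspond to "across-the-hexagon" displacements, which are the ones producing the $\sin$-type integrals. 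The derivation therefore needs an explicit listing of all nine displacement vectors $[w_jb_i]$ and a careful accounting of how the weight factors cancel between rows and columns; simply matching the diagonal to the single-edge densities and factoring out one weight per row does not yield the matrix in~\eqref{eq:dimers-explicit}. Your positivity argument, expanding the $3\times3$ determinant and applying $\sin\theta<\theta$, is correct and is essentially the paper's remark.

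Separately, be aware that the matrix in \eqref{eq:dimers-explicit} as printed has $-\sin(\theta_C)$ in both the $(2,2)$ and $(3,3)$ slots; this is clearly a typo for $-\sin(\theta_B)$ in one of them, and you should not try to reverse-engineer the computation to reproduce the typo literally.
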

The case~$\Pdimer_{(1,1,1)}$ was treated in~\cite[Section 4.6]{Ken97};
in general, the operator $K^{-1}_{(A,B,C)}$ can be evaluated explictly as in \cite[Section~$6$]{Ken09} and the same computations as in~\cite[Section 6.3]{Ken09} replacing $\Pdimer_{(A,B,C)}$ with $\Pdimer_{(1,1,1)}$ give the formula.

We now deduce Lemma~\ref{lem:hexagon}, which states that, under any non-frozen translation-invariant ergodic Gibbs measure~$\Pdimer$ for dimers on~$\Hexa$, a given hexagon contains exactly three dimers with a positive probability.
\begin{proof}[Proof of Lemma~\ref{lem:hexagon}]
    By Theorem~\ref{thm:KOS} and Corollary~\ref{cor:frozen}, if $\Pdimer$ is a non-frozen translation-invariant ergodic Gibbs measure for dimers on~$\Hexa$, $\Pdimer = \Pdimer_{(A,B,C)}$ for some~$A,B,C > 0$ that satisfy a strict triangle inequality.
    Hence Lemma~\ref{lem:hexagon} follows from Corollary~\ref{cor:hexagon:app}.
\end{proof}

\newcommand{\etalchar}[1]{$^{#1}$}
\providecommand{\bysame}{\leavevmode\hbox to3em{\hrulefill}\thinspace}
\providecommand{\MR}{\relax\ifhmode\unskip\space\fi MR }
\providecommand{\MRhref}[2]{%
  \href{http://www.ams.org/mathscinet-getitem?mr=#1}{#2}
}
\providecommand{\href}[2]{#2}

\end{document}